\newtheorem{Thm}{Theorem}[subsection]
\newtheorem{Conj}[Thm]{Conjecture}
\newtheorem{Prop}[Thm]{Proposition}
\newtheorem{Def}[Thm]{Definition}
\newtheorem{Def/Thm}[Thm]{Definition/Theorem}
\newtheorem{Cor}[Thm]{Corollary}
\newtheorem{Lemma}[Thm]{Lemma}
\theoremstyle{remark}
\newtheorem{Rmk}[Thm]{Remark}
\newtheorem{EG}[Thm]{Example}
\numberwithin{equation}{subsection}
\newcommand{\ot }{\otimes}
\newcommand{\ra }{\rightarrow}
\newcommand{\lra }{\longrightarrow}
\newcommand{\Map}{{\mathrm{Map}}}
\newcommand{\Proj}{{\mathrm{Proj}}}
\newcommand{\Hom }{{\mathrm{Hom}}}
\newcommand{\Spec}{{\mathrm{Spec}}}
\newcommand{\Pic}{{\mathrm{Pic}}}
\newcommand{\cO}{{\mathcal{O}}}
\newcommand{\cL}{{\mathcal{L}}}
\newcommand{\cE}{{\mathcal{E}}}
\newcommand{\cP}{{\mathcal{P}}}
\newcommand{\cC}{{\mathcal{C}}}
\newcommand{\fC}{{\mathfrak{C}}}
\newcommand{\fP}{{\mathfrak{P}}}
\newcommand{\G}{{\bf G}}
\newcommand{\bH}{{\bf H}}
\newcommand{\bS}{{\bf S}}
\newcommand{\NN}{{\mathbb N}}
\newcommand{\PP }{{\mathbb P}}
\newcommand{\QQ }{{\mathbb Q}}
\newcommand{\CC }{{\mathbb C}}
\newcommand{\ZZ }{{\mathbb Z}}
\newcommand{\RR }{{\mathbb R}}
\newcommand{\one }{{\mathbbm 1}}
\newcommand{\ke }{{\varepsilon }}
\newcommand{\ka }{{\alpha}}
\newcommand{\Mgk}{\overline{M}_{g,k}}
\newcommand{\fMgk}{\mathfrak{M}_{g,k}}
\newcommand{\Mlong}{\overline{M}_{0,k+\bullet}(\WmodG,\beta)}
\newcommand{\Qlong}{\mathrm{Q}_{0,k+\bullet}(\WmodG,\beta)}
\newcommand{\Qglonge}{\mathrm{Q}^\ke_{g,k+\bullet}(\WmodG,\beta)}
\newcommand{\Qlonge}{\mathrm{Q}^\ke_{0,k+\bullet}(\WmodG,\beta)}
\newcommand{\Qgonelonge}{\mathrm{Q}^\ke_{g_1,k_1+\bullet}(\WmodG,\beta_1)}
\newcommand{\Qgtwolonge}{\mathrm{Q}^\ke_{g_2,k_2+\bullet}(\WmodG,\beta_2)}
\newcommand{\QGge}{QG^\ke_{g,k,\beta}(\WmodG)}
\newcommand{\QGraphe}{QG^\ke_{0,k,\beta}(\WmodG)}
\newcommand{\eb}{ev_\bullet}
\newcommand{\WmodG}{W/\!\!/\G}
\newcommand{\QmapW}{\mathrm{Q}_{g,k}(\WmodG,\beta)}
\newcommand{\QmapWe}{\mathrm{Q}_{g,k}^\ke(\WmodG,\beta)}
\newcommand{\BunG}{\mathfrak{B}un_\G}
\newcommand{\T}{\bf T}
\newcommand{\fM}{\mathfrak{M}}
\newcommand{\lan}{\langle}
\newcommand{\ran}{\rangle}
\newcommand{\lla}{\langle\!\langle}
\newcommand{\rra}{\rangle\!\rangle}
\begin{document}
\title[Wall-crossing in $g=0$ quasimap theory and mirror maps]{Wall-crossing in genus zero quasimap theory and mirror maps}

\begin{abstract} For each positive rational number $\ke$, the theory of $\ke$-stable quasimaps to certain GIT quotients $\WmodG$ developed in \cite{CKM} gives rise to 
a Cohomological Field Theory. Furthermore, there is an asymptotic theory corresponding to $\ke\ra0$.
For $\ke >1$ one obtains the usual Gromov-Witten theory of $\WmodG$, while the other theories are new. However, they are all expected to contain the same information and
in particular the numerical invariants should be related by wall-crossing formulas. In this paper we analyze the genus zero picture and find that the wall-crossing in this case significantly
generalizes toric mirror symmetry (the toric cases correspond to abelian groups $\G$). In particular, we give a geometric interpretation of the mirror map as a generating
series of quasimap invariants. We prove our wall-crossing formulas for all targets $\WmodG$ which admit a torus action with isolated fixed points, as well as for zero loci
of sections of homogeneous vector bundles on such $\WmodG$.
\end{abstract}

\author{Ionu\c t Ciocan-Fontanine}
\noindent\address{School of Mathematics, University of Minnesota, 206 Church St. SE,
Minneapolis MN, 55455, and\hfill
\newline \indent School of Mathematics, Korea Institute for Advanced Study,
85 Hoegiro, Dongdaemun-gu, Seoul, 130-722, Korea}
\email{ciocan@math.umn.edu}

\author{Bumsig Kim}
\address{School of Mathematics, Korea Institute for Advanced Study,
85 Hoegiro, Dongdaemun-gu, Seoul, 130-722, Korea}
\email{bumsig@kias.re.kr}\maketitle

\section{Introduction} 
\subsection{Overview} For a complex affine algebraic variety $W$ acted upon by a reductive group $\G$, a choice of a 
character $\theta$ of $\G$ determines a linearized line bundle $L_\theta$ on $W$,
and hence a GIT quotient $\WmodG=W/\!\!/_\theta \G$ with a canonical (relative) polarization $\cO(\theta)$. 
Once such a $\theta$ is fixed, and under reasonable conditions on the triple $(W,\G,\theta)$, 
certain stability conditions (depending on a parameter $\ke\in \QQ_{>0}$ ) are imposed on the moduli stacks
$\fMgk([W/\G],\beta)$ parametrizing maps of (homology) class $\beta$ from $k$-pointed nodal curves of genus $g$ to the quotient {\it stack} $[W/\G]$. This produces
(relatively) proper Deligne-Mumford moduli stacks of $\ke$-{\it stable quasimaps},
carrying virtual fundamental classes. They come equipped with evaluation maps and with tautological cotangent $\psi$-classes at the markings.
Moreover, the same holds for an asymptotic stability condition denoted by $\ke=0+$ and corresponding to $\ke\ra 0$. 

These structures allow the introduction of numerical invariants by integrating natural cohomology classes against the virtual class. 
Explicitly, for $\ke\geq 0+$, let $\QmapWe$ denote the moduli space of
$\ke$-stable quasimaps of class $\beta$. When $\WmodG$ is projective, its (descendant) $\ke$-quasimap invariants are 
defined by
$$\langle \gamma_1\psi_1^{a_1},\dots,\gamma_k\psi_k^{a_k}
\rangle^\ke_{g,k,\beta}:=\int_{[\QmapWe]^{\mathrm{vir}}}\prod_{j=1}^k  ev_j^*(\gamma_j)\psi_j^{a_j},$$
for integers $a_j\geq 0$ and cohomology classes $\gamma_j\in H^*(\WmodG,\QQ)$. 

If $\WmodG$ is only quasiprojective one may still define a good theory of {\it equivariant} quasimap invariants in the presence of an action with 
good properties by an algebraic torus, 
via the virtual localization formula.

For each fixed class $\beta$ the set $\QQ_{>0}$ is divided into stability chambers by finitely many walls $1,\frac{1}{2},\dots,\frac{1}{\beta(L_\theta)}$ such that
the moduli spaces (and therefore the invariants) stay constant in each chamber. Here $\beta(L_\theta)$ is essentially the degree on the curve class
$\beta$ of the polarization $\cO(\theta)$. For $\ke\in(1,\infty)$ one obtains  the Gromov-Witten invariants of $\WmodG$; we write $\ke=\infty$ for these stability conditions.
On the other hand, the first stability chamber $(0,\frac{1}{\beta(L_\theta)}]$ depends on $\beta$ and the asymptotic stability condition $\ke=0+$ corresponds to being
in this first chamber for {\it all} $\beta$.

The foundations of stable quasimaps were established in \cite{CKM}, as a unified perspective on earlier constructions in \cite{MOP}, \cite{CK}, 
\cite{MM}, \cite{Toda} in some special cases; a summary is included in \S 2 below. 
The theory applies to a large class of targets, including  a host of varieties whose Gromov-Witten theory has received attention
over the last two decades: toric and flag varieties, as well as zero loci of sections of homogeneous bundles on them, local targets with base a GIT quotient, 
Nakajima quiver varieties etc. See \S\ref{Examples} for a list and more details.

 As explained in \cite{CKM}, one has in fact  a Cohomological Field Theory in the sense of Kontsevich and Manin for each value
of $\ke$, and the expectation is that these CohFT's obtained by varying a stability condition are equivalent. In particular, 
one should express the correspondence
between the numerical invariants via wall-crossing formulas. The goal of this paper is to formulate such wall-crossing formulas for the genus zero sector of the
theory. What we discover is that they  turn out to be a significant generalization of Givental's mirror
theorems for semi-positive complete intersections in toric varieties. 
At the same time, we give a geometric interpretation to the {\it mirror map} as a generating
series for certain quasimap invariants.

\subsection{Wall-crossing} The genus zero wall-crossing formulas are most naturally expressed via generating functions for quasimap invariants.

Let the {Novikov ring} $\Lambda=\QQ[[q]]$
be the $q$-adic completion of the semigroup ring on the $L_\theta$-effective curve classes (see Definition \ref{t-effective} for the notion of $L_\theta$-effective classes).
Let $\{\gamma_i\}$ be a homogeneous
basis of the rational cohomology $H^*(\WmodG)$ (equivariant, in the non-compact case) and let $\{ \gamma^i\}$ be the dual basis 
with respect to the intersection pairing. Denote by ${\bf t}=\sum_it_i\gamma_i$ the general element and
by $\one$ the unit in
$H^*(\WmodG)$.

Let $z$ be  a formal variable. For $0+\leq \ke$ and $\gamma\in H^*(\WmodG,\Lambda)$, put

$$S^\ke_{\bf t}(\gamma):=\gamma+\sum_i\gamma_i\sum_{(m,\beta)\neq (0, 0)}\frac{q^\beta}{m!} 
\lan \frac{\gamma^i}{z-\psi},\gamma,{\bf t},\dots,{\bf t}\ran_{0,2+m,\beta}^\ke .$$
Our wall-crossing formula (see Conjecture \ref{conj S}, Theorem \ref{equiv Thm1} and Corollary \ref{nonequiv limit}) relates these series for arbitrary $\ke_1,\ke_2$ in different stability chambers (not necessarily adjacent) and invertible classes 
$\gamma$ of the form $\gamma=\one+O(q)$. The case of main interest, when $\gamma=\one$ and we consider wall-crossing from
$\ke$-stable quasimap invariants for an arbitrary $0+\leq\ke\leq 1$ to Gromov-Witten invariants  
is stated here for simplicity.  Let
\begin{equation}\label{string transf intro}\tau^{\ke} ({\bf t}):=
{\bf t}+\sum_i\gamma_i\sum_{\beta\neq 0,m\geq 0}
\frac{q^\beta}{m!}\lan\gamma^i,\one,{\bf t},\dots ,{\bf t}\ran^\ke_{0,2+m,\beta} .\end{equation} 
We then make the following:

\begin{Conj}\label{mirror big intro} For all targets $\WmodG$ and all $\ke \geq 0+$,
$$S^{\infty}_{\tau^{\ke} ({\bf t})}(\one)=S^\ke_{{\bf t}}(\one).
$$
\end{Conj}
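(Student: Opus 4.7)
The plan is to deduce the identity by equivariant virtual localization on an auxiliary graph space, following the strategy Givental employed in his proof of the toric mirror theorem, as adapted to the quasimap setting. For each $\ke\ge 0+$ and each curve class $\beta$, one introduces a graph space $\QGraphe$ whose points parametrize $\ke$-stable quasimaps from a genus zero nodal curve $(C,p_1,\dots,p_k)$ to $[W/\G]$ equipped with a regular map $C\to\PP^1$ of degree one. It carries a natural $\CC^*$-action induced by scaling $\PP^1$, with fixed points $0$ and $\infty$; let $z$ denote the resulting equivariant parameter, which will play the role of the formal variable in the definition of $S^\ke_{\bf t}$.

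Next, one shows that $S^\ke_{\bf t}(\one)$ coincides with the total $\CC^*$-fixed contribution from the locus lying over $0\in\PP^1$. On this locus the domain has a distinguished component mapping isomorphically to $\PP^1$, and at $0$ there is an ordinary $\ke$-stable quasimap attached through a node $\bullet$. The Atiyah--Bott normal-bundle factor produces the universal denominator $\frac{1}{z-\psi_\bullet}$; after summing over $\beta$ and distributing the insertions ${\bf t},\dots,{\bf t}$ across the remaining markings, one recovers $S^\ke_{\bf t}(\one)$ term by term.

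The third step is to analyze the $\CC^*$-fixed contribution coming from the locus over $\infty$. Here the point $\infty$ lies on a distinguished rational component on which the quasimap is forced, by the graph-space stability analysis, to be a stable map in the Gromov--Witten sense, so the main body of each fixed locus assembles into a Gromov--Witten correlator. Attached at the node over $\infty$, however, one may have arbitrarily many \emph{quasimap tails} of nonzero class: each tail is a two-pointed genus zero $\ke$-stable quasimap carrying $\one$ at the free end, a dual-basis class $\gamma^i$ at the node, and some subset of the ${\bf t}$-insertions at additional markings. Running through the dual basis $\{\gamma_i\},\{\gamma^i\}$ and pairing the tail insertion at the node against the Gromov--Witten body, the combined contribution is precisely $S^\infty_{\tilde{\bf t}}(\one)$ with $\tilde{\bf t}=\tau^\ke({\bf t})$ given exactly by \eqref{string transf intro}. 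Equating with the $0$-contribution yields the conjectured identity.

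The main obstacle, and the reason for the hypotheses flagged in the abstract, is to make the above rigorous when $\WmodG$ is not projective or when $W$ itself does not carry a convenient torus action. The hypothesis that $\WmodG$ admits a $\bT$-action with isolated fixed points enables a second layer of Atiyah--Bott localization on $\WmodG$ itself, reducing integrals over $\QGraphe$ to finite sums indexed by decorated graphs of fixed points and so controlling body and tail contributions uniformly; this is the step that genuinely requires the geometric hypothesis. A secondary technical point is to check that the tails combine into exactly \eqref{string transf intro}, which relies on the local-to-global factorization of quasimaps through a node together with the standard splitting axiom. Once these are in place, the extension to zero loci of sections of homogeneous bundles on $\WmodG$ proceeds by a quantum Lefschetz argument that is compatible with the graph-space localization.
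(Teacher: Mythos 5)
Your proposal rests on a claim about the $\CC^*$-fixed loci of the graph space $\QGraphe$ that is not correct and cannot be repaired within the stated framework. For a \emph{fixed} stability parameter $\ke$, the $\CC^*$-action (scaling the parametrized $\PP^1$) is symmetric under exchanging $0\leftrightarrow\infty$, and the fixed loci over both $0$ and over $\infty$ consist of $\ke$-stable quasimaps glued to a constant $\PP^1$ — they have exactly the same structure. There is no mechanism in the $\ke$-graph-space stability analysis that ``forces'' the piece over $\infty$ to be a Kontsevich-stable map; the stability condition on the non-parametrized part of the curve is $\ke$-stability on both sides. Consequently the contribution at $\infty$ does not assemble into a Gromov-Witten correlator with ``quasimap tails,'' and the central identity you want to read off — $0$-side equals $S^\ke_{\bf t}(\one)$, $\infty$-side equals $S^\infty_{\tau^\ke({\bf t})}(\one)$ — is not what $\CC^*$-localization on a single graph space produces. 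What one actually obtains from localizing on $\QGraphe$ is a bilinear statement involving $J^\ke$ (or $S^\ke$) on \emph{both} sides, as in Proposition \ref{poly lemma} or Proposition \ref{Unitary}; it never mixes two different stability parameters.

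The paper's actual argument is structurally different and circumvents exactly this issue. It never tries to realize the two sides of the wall-crossing identity as the two fixed-point contributions on one space. Instead, Theorem \ref{equiv Thm1} is proved by a \emph{uniqueness/characterization} argument (Lemma \ref{uniqueness lemma}): one shows that the fixed-point restrictions $S_\mu$ of both $S^{\ke_1}_{\tau^{\ke_1,\ke_2}_\gamma({\bf t})}(\gamma)$ and $S^{\ke_2}_{\bf t}(\gamma)$ satisfy the same set of properties — the pole structure of Lemma \ref{poles} and the recursion relation of Lemma \ref{recursion lemma}, both established by $\T$-localization on the quasimap moduli spaces $\QmapWe$ (not graph spaces); the polynomiality of the convolution $D(S^\ke_\mu)$ in Lemma \ref{local poly}, established by $\CC^*$-localization on graph spaces; and agreement modulo $1/z^2$, which is built into the definition of the generalized string transformation via \eqref{1/z matching}. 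The crucial observation that makes wall-crossing go through is that the recursion coefficients and the polynomiality property are independent of $\ke$, because the ``edge'' (unbroken) components of the $\T$-fixed loci always parametrize honest stable maps and hence do not change with the stability parameter. You correctly identify that the hypothesis of a torus action with isolated fixed points is what makes the argument viable, but the way it is used is to support this characterization, not to run a direct one-space localization comparison. You would need to replace your second and third steps entirely with the recursion and polynomiality lemmas and the Uniqueness Lemma to obtain a correct proof.
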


In $\S 7$ the following Theorem is proved:

\begin{Thm} \label{Main}
Assume that $W$ admits an action by a torus $\T$, commuting with the action of $\G$ and such that the fixed points of the induced $\T$-action on $\WmodG$ are
isolated. Then Conjecture \ref{mirror big intro} holds. Moreover, if $E$ is a convex $\G$-representation, then the conjecture also holds for the $E$-twisted $\ke$-quasimap
theories of $\WmodG$.
\end{Thm}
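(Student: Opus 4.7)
The strategy is to express both sides of the identity as parametrizations of the same slice of Givental's overruled Lagrangian cone $\cL_{\WmodG}$, and to identify the coordinate change between the two parametrizations with $\tau^\ke$. For $\ke=\infty$, it is classical that $\{S^\infty_{\bf t}(\one)\}$ is a slice of $\cL_{\WmodG}$ transverse to the $z$-direction, because $S^\infty_{\bf t}(\one)$ is essentially the big $J$-function of $\WmodG$. The heart of the proof is to show that $\{S^\ke_{\bf t}(\one)\}$ is also a slice of the same cone for arbitrary $\ke \geq 0+$, and to recover the coordinate change explicitly.

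To place $S^\ke$ on the cone, I would introduce the $\ke$-quasimap graph space $\QGraphe$ following \cite{CKM}: the moduli space of $\ke$-stable quasimaps whose domain carries a distinguished parametrized $\PP^1$-component, endowed with the standard $\CC^*$-action rotating $\PP^1$. Virtual $\CC^*$-localization produces a distinguished fixed locus in which the full class $\beta$ is concentrated over $0\in\PP^1$; this locus is essentially $\QmapWe$, and its equivariant residue packages into a cohomology-valued series whose negative-$z$ part encodes $S^\ke_{\bf t}(\one)$, while the remaining fixed loci contribute regular terms at $z=\infty$. This produces a natural candidate point of the cone, assembled purely from $\ke$-quasimap data.

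The next step is to show that the assembled series actually lies on $\cL_{\WmodG}$. Here the isolated-fixed-points hypothesis is used through a Givental--Brown style characterization of the cone: in $\T$-equivariant cohomology, a series lies on $\cL_{\WmodG}$ iff its $\T$-localizations satisfy explicit recursion relations at every $\T$-fixed point of $\WmodG$, prescribing the poles in $z$ in terms of values at neighboring fixed points along $\T$-invariant curves. I would verify these recursions for the graph-space series by analyzing the $\T\times\CC^*$-fixed loci in $\QGraphe$, which in the isolated case are combinatorially tractable: they are indexed by decorated trees whose nodes specify which fixed point the quasimap sits over. Once both $S^\ke_{\bf t}(\one)$ and $S^\infty_{\bf t}(\one)$ are on the cone, matching them is a matter of comparing two slices transverse to $z$; the explicit coordinate change $\tau^\ke$ emerges from the unit / string-type relations for the $\ke$-CohFT, recovering the formula \eqref{string transf intro}.

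The twisted case is handled by inserting the Euler class of $R^0\pi_*f^*E$ into the virtual class throughout. Convexity of $E$ ensures this remains a genuine vector bundle on the relevant moduli (base-points of quasimaps occurring in codimension at least two), and every step above is compatible with the modification. The main obstacle I expect is verifying the local recursion relations for $S^\ke$: unlike in Gromov--Witten theory, $\ke$-quasimaps permit base-points on the domain, and one must track how contributions from base-point strata decompose across the $\T\times\CC^*$-fixed loci of $\QGraphe$. The chamber structure $(0,\frac{1}{\beta(L_\theta)}]$ and the asymptotic condition $\ke=0+$ are essential here to ensure that the recursion pattern is uniform in $\beta$ within each chamber, so that the generating series relation can be established chamber by chamber and then glued across walls.
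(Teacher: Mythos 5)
Your overall flavor is right—$\T$-localization on graph spaces, recursion relations at fixed points, matching generating series—but there is a genuine gap that makes your proposed route prove a strictly weaker result than Theorem~\ref{Main}.

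You propose to show that $S^\ke_{\bf t}(\one)$ lies on the Lagrangian cone $\cL ag_{\WmodG}$ via ``a Givental--Brown style characterization of the cone'' by recursion relations prescribing the poles in $z$. That characterization, as usually formulated, requires \emph{simple} poles at $z = -w(\mu,\nu)/n$, which in turn requires the $1$-dimensional $\T$-orbits in $\WmodG$ to be isolated (so that the unbroken fixed components are zero-dimensional). Theorem~\ref{Main}, however, only assumes that the \emph{fixed points} of $\T$ on $\WmodG$ are isolated, and the introduction stresses that isolated $1$-dimensional orbits are \emph{not} required. When $1$-dimensional orbits vary in families, the edge moduli spaces $M'$ are positive-dimensional and the localizations $S^\ke_\mu$ acquire higher-order poles at $z = \ka_{M'}$ (see Lemma~\ref{poles} and the recursion \eqref{recursion formula}). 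The cone characterization you invoke does not apply in that generality, so your argument, as written, would only prove the statement under the stronger hypothesis --- this is exactly the regime of Theorem~\ref{equiv Thm2} / Conjecture~\ref{big J}, not of Theorem~\ref{Main}.

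The paper sidesteps this by not trying to characterize $\cL ag_{\WmodG}$ at all. Instead it establishes a Uniqueness Lemma (Lemma~\ref{uniqueness lemma}) for systems $\{S_{j,\mu}\}$ satisfying five conditions: (1) the pole structure of Lemma~\ref{poles} (allowing higher-order poles at the recursion values); (2) the generalized recursion \eqref{recursion formula}, whose coefficients are $\ke$-independent integrals over unbroken components $M'$ of possibly positive dimension; (3) polynomiality of the Givental-type convolution $D(S_\mu)$ from the localized graph-space integral (Lemma~\ref{local poly}); (4) agreement modulo $1/z^2$; and (5) agreement at $q=0$. One then checks that $S_{1,\mu}:=i_\mu^* S^{\ke_1}_{\tau^{\ke_1,\ke_2}_\gamma ({\bf t})}(\gamma)$ and $S_{2,\mu}:=i_\mu^* S^{\ke_2}_{\bf t}(\gamma)$ both satisfy all five, hence coincide. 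The observation that the edge factors in the recursion are $\ke$-independent even when $M'$ is positive-dimensional is the crucial point that makes the weaker hypothesis (isolated fixed points only) sufficient; your Brown-type argument misses this. Your final sentence about proving the identity ``chamber by chamber and gluing across walls'' also does not match the paper's argument, which treats all $\beta$ (hence all chambers) simultaneously via the $q$-adic induction inside the Uniqueness Lemma. The twisted case is handled much as you say, though the role of convexity is more precisely to guarantee $R^1\pi_*E_{0,k,\beta}=0$ on all rational quasimap domains, so that $R^0\pi_*E_{0,k,\beta}$ is a bundle, rather than anything about base-points being codimension two.
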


The Theorem covers the genus zero quasimap theories of almost all interesting examples of GIT targets listed in $\S\ref{Examples}$: toric varieties, flag varieties, and local 
targets over them all admit torus actions as above. Among Nakajima quiver varieties, interesting examples are the Hilbert schemes of points on $\CC^2$ and on 
resolutions of $A_n$-singularities, as well as their higher rank analogues (note that we do {\it not} require in Theorem \ref{Main} that the $1$-dimensional $\T$-orbits in $\WmodG$ are also isolated).
Finally, zero loci of sections of globally generated homogeneous vector bundles on all the above targets are also covered by identifying their quasimap theories with twisted
theories of the ambient $\WmodG$ - see \cite{CKM} and $\S\ref{twisting}$ below for more details about this identification.

The strategy for proving Theorem \ref{Main} uses virtual localization and is very much inspired by Givental's original approach to the proofs of toric mirror theorems in
\cite{Givental-equiv}, \cite{Givental}, \cite{Givental-elliptic}.
The crucial observation that allows us to extend those arguments is that certain edge factors in localization formulas are $\ke$-independent.

\subsection{$J$-functions and Birkhoff factorization}
The series $S^\infty_{\bf t}(\one)$ in the Gromov-Witten case coincides with Givental's big $J$-function of $\WmodG$ by the string equation. In quasimap theory, 
there are analogous big $J^\ke$-functions for each stability parameter, see \cite{CKM} and $\S 5$ of this paper. In particular, there
is a {\it big} $I$-{\it function} corresponding to the $\ke=0+$ stability condition.
For $\ke\leq 1$ 
these $J^\ke$-functions may contain both positive and negative powers of $z$ and their relation
with the $S^\ke$-operators is given by a more subtle ``Birkhoff factorization" formula. More precisely, we prove (see $\S\ref{Birkhoff}$, 
Theorem \ref{Birkhoff Thm}) the following

\begin{Thm}\label{Birkhoff-intro} For all GIT targets $\WmodG$ with well-defined quasimap theory and
 for every stability parameter $\ke\geq 0+$ there is a naturally defined series 
$P^\ke=P^\ke ({\bf t}, q, z)\in H^*(\WmodG,\Lambda)[[z]]$, convergent in the $q$-adic topology and satisfying
$$J^\ke=S^\ke_{\bf t}(P^\ke).$$
\end{Thm}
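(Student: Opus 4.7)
The strategy is a Birkhoff factorization argument in the spirit of Givental. The key observation is that, by expanding $\frac{1}{z-\psi}=\sum_{k\geq 0}\psi^k z^{-k-1}$, the operator $S^\ke_{\bf t}$ takes the form $S^\ke_{\bf t}(\gamma)=\gamma+(q\text{-divisible and strictly-}z^{-1}\text{-divisible terms})$. Consequently, for any $P\in H^*(\WmodG,\Lambda)[[z]]$ the non-negative-powers-of-$z$ part of $S^\ke_{\bf t}(P)$ coincides with $P$ modulo $q$, and the equation $J^\ke=S^\ke_{\bf t}(P^\ke)$ determines $P^\ke$ uniquely by iteration on the $q$-adic filtration, starting from $P^\ke\equiv\one\pmod{q}$.

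The iteration proceeds order by order in $q$: at each step, the non-negative-$z$-powers in $J^\ke-S^\ke_{\bf t}(P^\ke)$ determine uniquely the next-order correction to $P^\ke$ (which lies in $H^*(\WmodG,\Lambda)[z]$), while convergence in the $q$-adic topology is automatic from the construction. The \emph{content} of the theorem is therefore the assertion that, at every $q$-order, the coefficients of strictly negative powers of $z$ in $J^\ke-S^\ke_{\bf t}(P^\ke)$ also vanish, so that the iteration actually closes up to produce a genuine solution.

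My approach to this geometric identity would be virtual torus localization on the $\ke$-stable graph space $\QGraphe$, parametrizing $\ke$-stable quasimaps whose source carries a distinguished parametrized $\PP^1$-component, equipped with the $\CC^*$-action scaling this $\PP^1$ with fixed points $0$ and $\infty$. By construction, $J^\ke$ is extracted as the generating series for the contribution of the $\CC^*$-fixed locus in which all additional marked points and bubble components lie over $\infty$. Reorganizing the same graph-space integral by summing over all distributions of markings between the two fixed points of $\PP^1$ writes it in the form $S^\ke_{\bf t}(P^\ke)$: the contribution at $\infty$ assembles into the two-pointed correlators defining $S^\ke_{\bf t}$, with the $\frac{1}{z-\psi}$ insertion arising from the smoothing class at the node connecting bubble trees to the parametrized $\PP^1$; the contribution at $0$, involving at most one marked point and the parametrized $\PP^1$ itself, yields the Birkhoff factor $P^\ke$.

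The main obstacle is the localization bookkeeping: carefully tracking $\psi$-classes at the markings, handling edge and node contributions from bubbling over $0$ and $\infty$, and verifying that $\ke$-stability on the graph space restricts correctly to $\ke$-stability on the bubble trees. The argument is a direct analog of Givental's graph-space proof of the $I=J$ mirror identity for toric varieties, extended from $\ke=0+$ to arbitrary $\ke\geq 0+$ and reinterpreted as the Birkhoff factorization of the theorem.
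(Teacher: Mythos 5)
Your algebraic reduction in the first two paragraphs is sound: the equation $J^\ke=S^\ke_{\bf t}(P^\ke)$ does determine $P^\ke$ uniquely by $q$-adic iteration if a solution exists, and the real content is indeed that the strictly-negative powers of $z$ also match. You also correctly identify the graph space $\QGraphe$ and its $\CC^*$-action as the geometric engine, and the decomposition of $\CC^*$-fixed loci over $0$ and $\infty$ as the source of the factorization. However, the proposed mechanism --- ``reorganizing the same graph-space integral'' --- does not quite cohere: $J^\ke$ is \emph{not} a full $\CC^*$-equivariant integral on the graph space, but a pushforward of localization \emph{residues} at the single fixed-locus family $F_0$, so there is no honest equivariant integral that one can compute two ways to produce $J^\ke = S^\ke_{\bf t}(P^\ke)$ directly. (Also, in the paper's conventions $J^\ke$ is the contribution over $0$, not over $\infty$, but that is cosmetic.)

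The paper closes this gap differently. It \emph{defines} $P^\ke:=\sum_i\gamma^i\lla\gamma_i p_\infty\rra_1^{QG^\ke}$ as a genuine $\CC^*$-equivariant graph-space integral, using the equivariant class $p_\infty$ (with $p_\infty|_0=0$, $p_\infty|_\infty=-z$) at the distinguished marking; since this is a full equivariant pushforward, it is automatically a polynomial in $z$ in each $q$-degree. Virtual $\CC^*$-localization of this integral --- with $p_\infty$ forcing the marking to lie over $\infty$ --- produces the identity $P^\ke=(S^\ke)^\star(-z)(J^\ke)$: the factor over $\infty$ (carrying the marking) assembles into the adjoint $S$-operator, while the factor over $0$ gives exactly the $J^\ke$-residues. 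One then needs the \emph{unitarity} $(S^\ke)^\star(-z)\circ S^\ke(z)=Id$ (Proposition \ref{Unitary}) to invert and obtain $J^\ke=S^\ke(z)(P^\ke)$; that unitarity is itself proved by yet another graph-space localization, this time with two insertions $p_0, p_\infty$. Your sketch is therefore missing both the precise cohomological insertion ($p_\infty$) that makes $P^\ke$ a polynomial-in-$z$ integral, and, more essentially, the unitarity lemma. Without unitarity, the localization on the graph space only yields $P^\ke=(S^\ke)^\star(-z)(J^\ke)$, not the Birkhoff factorization of $J^\ke$ itself.
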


The series $P^\ke$ (see \eqref{P-series def}) is a generating series of equivariant virtual intersection numbers on the {\it quasimap graph spaces} $\QGraphe$. 
These are moduli spaces of $\ke$-stable quasimaps whose domain curve has a component which is a parametrized $\PP^1$. The standard $\CC^*$-action on $\PP^1$ lifts 
to the graph spaces and the variable $z$ appears naturally as the equivariant parameter. As the $J^\ke$-functions themselves are defined via 
$\CC^*$-equivariant residues on graph spaces, the Birkhoff factorization formula is obtained from virtual localization for this $\CC^*$-action.

\subsection{Semi-positive targets }  Birkhoff factorization simplifies drastically for {\it semi-positive} targets.
The triple $(W,\G,\theta)$ is called semi-positive if $\beta(\det(T_W))\geq 0$ for all $L_\theta$-effective classes $\beta$.
Here $T_W$ is the $\G$-equivariant virtual tangent bundle of the lci variety $W$.
Semi-positivity implies that the GIT quotient $\WmodG$ has nef anti-canonical class.

Elementary dimension counting arguments in \S\ref{semi-positive} show  
that the $J$-functions of semi-positive targets have $z$-expansion of the form
$$J^\ke(q,{\bf t},z)=J^\ke_0(q)\one+({\bf t}+J^\ke_1(q))\frac{1}{z}+O(1/z^2)$$
with $J^\ke_0(q)\in\Lambda$ invertible and $J^\ke_1(q)\in H^{\le 2}(\WmodG,\Lambda)$, and that
$$P^\ke(q,{\bf t},z)= J^\ke_0(q)\one.$$ 

Hence Theorem \ref{Birkhoff-intro} has the following
\begin{Cor}\label{semipositive-intro }For semi-positive $(W,\G,\theta)$ and for all $\ke\geq 0+$,
$$\frac{J^\ke(q,{\bf t},z)}{J^\ke_0(q)}=S^\ke_{\bf t}(\one).$$
\end{Cor}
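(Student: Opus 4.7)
The proof is essentially an immediate consequence of combining the Birkhoff factorization Theorem~\ref{Birkhoff-intro} with the semi-positivity dimension count alluded to just before the corollary; the only piece beyond bookkeeping is to verify that the operator $S^\ke_{\bf t}$ is linear over the Novikov ring $\Lambda$ in its argument.

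\medskip

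\noindent\textbf{Step 1.} The plan is to first invoke Theorem~\ref{Birkhoff-intro}, which provides a series $P^\ke\in H^*(\WmodG,\Lambda)[[z]]$ with $J^\ke=S^\ke_{\bf t}(P^\ke)$. For semi-positive targets, the dimension-count arguments in $\S\ref{semi-positive}$ already give the stronger conclusion
\[
P^\ke(q,{\bf t},z)=J^\ke_0(q)\,\one,
\]
i.e.\ $P^\ke$ is independent of both $z$ and ${\bf t}$, and is proportional to the unit. So we may assume this structural fact as input.

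\medskip

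\noindent\textbf{Step 2.} Next, observe that for any $f(q)\in\Lambda$, the definition
\[
S^\ke_{\bf t}(\gamma)=\gamma+\sum_i\gamma_i\sum_{(m,\beta)\neq (0,0)}\frac{q^\beta}{m!}\lan \tfrac{\gamma^i}{z-\psi},\gamma,{\bf t},\dots,{\bf t}\ran^\ke_{0,2+m,\beta}
\]
is $\Lambda$-linear in $\gamma$: the quasimap bracket is $\QQ$-multilinear in its insertions, and scalar coefficients in $\Lambda$ commute with integration against the virtual class. Setting $\gamma=J^\ke_0(q)\one$ therefore yields $S^\ke_{\bf t}(J^\ke_0(q)\one)=J^\ke_0(q)\,S^\ke_{\bf t}(\one)$.

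\medskip

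\noindent\textbf{Step 3.} Combining these,
\[
J^\ke=S^\ke_{\bf t}(P^\ke)=S^\ke_{\bf t}(J^\ke_0(q)\one)=J^\ke_0(q)\,S^\ke_{\bf t}(\one).
\]
Since $J^\ke_0(q)\in\Lambda$ is invertible (it is of the form $1+O(q)$ by the same semi-positivity discussion that identifies $P^\ke$), we may divide and obtain the claimed identity $J^\ke/J^\ke_0=S^\ke_{\bf t}(\one)$.

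\medskip

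\noindent\textbf{Main obstacle.} There is essentially no new geometric content here beyond what has already been assembled; the only delicate point to check is Step 2, namely that the Novikov-ring scalar $J^\ke_0(q)$ really can be pulled out of the insertion slot of $S^\ke_{\bf t}$. This requires one to note that $\gamma$ in the definition of $S^\ke_{\bf t}$ is allowed to range over $H^*(\WmodG,\Lambda)$, and that the $q$-adic convergence of the sum is preserved under multiplication by an element of $\Lambda$. Once this is verified, the corollary follows formally.
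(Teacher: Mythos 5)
Your proof is correct and follows essentially the same route as the paper: the paper also deduces Corollary~\ref{semipositive-intro } (stated as Corollary~\ref{nef big} in \S\ref{semi-positive}) by establishing $P^\ke(q,{\bf t},z)=J^\ke_0(q)\one$ via the dimension count, substituting into Theorem~\ref{Birkhoff Thm}, and dividing by the invertible scalar $J^\ke_0(q)$. Your Step~2 (the $\Lambda$-linearity of $S^\ke_{\bf t}$) is a point the paper treats as implicit, but you are right that it is the only nontrivial bookkeeping to verify.
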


A very special case of the above Corollary is the main result of \cite{CZ}.
Equating coefficients of $1/z$ gives an expression for the transformation \eqref{string transf intro} in terms of $J_0^\ke$ and $J_1^\ke$.
\begin{Cor} For semi-positive $(W,\G,\theta)$ and for all $\ke\geq 0+$,
$$\tau^{\ke} ({\bf t})=\frac{{\bf t}+J^\ke_1(q)}{J^\ke_0(q)}
$$
\end{Cor}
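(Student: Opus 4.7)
My plan is to compare coefficients of $z^{-1}$ on both sides of the identity
$$\frac{J^\ke(q,{\bf t},z)}{J^\ke_0(q)}=S^\ke_{\bf t}(\one)$$
supplied by Corollary \ref{semipositive-intro }. On the right-hand side, the semi-positive $z$-expansion of $J^\ke$ recalled just above that corollary immediately yields the $z^{-1}$ coefficient $({\bf t}+J^\ke_1(q))/J^\ke_0(q)$, which is exactly the quantity the present corollary identifies with $\tau^\ke({\bf t})$.

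For the left-hand side I would expand $1/(z-\psi)=\sum_{a\geq 0}\psi^a z^{-a-1}$ in the defining series of $S^\ke_{\bf t}(\one)$ and read off the coefficient
$$C:=\sum_i\gamma_i\sum_{(m,\beta)\neq(0,0)}\frac{q^\beta}{m!}\lan\gamma^i,\one,{\bf t},\dots,{\bf t}\ran^\ke_{0,2+m,\beta}.$$
I would then split $C$ according to whether $\beta\neq 0$ or $\beta=0$. The $\beta\neq 0$ contribution matches $\tau^\ke({\bf t})-{\bf t}$ term by term by the defining formula \eqref{string transf intro}, so it remains only to check that the $\beta=0$ part equals ${\bf t}$.

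The key observation for the $\beta=0$ terms (where necessarily $m\geq 1$) is that an $\ke$-stable quasimap of class zero is automatically a constant stable map, since a constant map to $\WmodG$ has no base points; hence the moduli space coincides with $\oM_{0,2+m}\times\WmodG$ with its fundamental class as virtual class, and the ordinary genus-zero string equation applies to the $\one$-insertion. For $m\geq 2$ this produces zero (there are no $\psi$-classes to lower), while the exceptional $m=1$ case is simply the triple intersection
$$\sum_i\gamma_i\lan\gamma^i,\one,{\bf t}\ran^\ke_{0,3,0}=\sum_i\gamma_i\int_{\WmodG}\gamma^i\cup{\bf t}={\bf t}.$$
Combining the two cases yields $C=\tau^\ke({\bf t})$, and equating $z^{-1}$ coefficients gives the corollary. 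No substantive obstacle arises: once Corollary \ref{semipositive-intro } is in hand, the result is essentially bookkeeping at the $z^{-1}$ order of that identity.
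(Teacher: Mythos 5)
Your argument is correct and takes essentially the same route as the paper: equating $1/z$-coefficients in Corollary 1.1.4 (i.e., in the identity $J^\ke/J_0^\ke = S^\ke_{\bf t}(\one)$), using the semi-positive expansion \eqref{J-asy} on the left and the series expansion of $S^\ke_{\bf t}(\one)$ on the right. The only extra explicitness you add is spelling out why the $\beta=0,\,m\geq 2$ terms vanish (via the string equation on $\overline{M}_{0,2+m}\times\WmodG$, or equivalently $\dim\overline{M}_{0,2+m}>0$), a step the paper absorbs into the tautological equality of the two expressions for $\tau^\ke$ in \eqref{string transf}; this is correct and welcome bookkeeping.
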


\subsection{Mirror symmetry from wall-crossing} For semi-positive targets the wall-crossing formula translates into the following statement.
\begin{Cor} \label{semi-positive mirror} Assume that Conjecture \ref{mirror big intro} holds for the semi-positive triple $(W,\G,\theta)$. Then for all $\ke\geq 0+$,
$$J^\infty(q,\tau^{\ke}({\bf t}),z)=\frac{J^\ke(q,{\bf t},z)}{J^\ke_0(q)}$$
with $\tau^{\ke} ({\bf t})=\frac{{\bf t}+J^\ke_1(q)}{J^\ke_0(q)}$. 
In particular, this is
true in all cases covered by Theorem \ref{Main}.
\end{Cor}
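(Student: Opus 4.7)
The plan is to chain together Conjecture~\ref{mirror big intro} with the two preceding semi-positive corollaries, using the specialization at $\ke=\infty$ as a bridge. The key preliminary observation is that $J^\infty_0(q)\equiv 1$: the paper has already noted that $S^\infty_{\bf s}(\one)$ coincides with Givental's big $J$-function of $\WmodG$ by the string equation, and combining this with the first of the two preceding semi-positive corollaries specialized to $\ke=\infty$ forces $J^\infty_0\equiv 1$. Consequently the identity $J^\infty(q,{\bf s},z)=S^\infty_{\bf s}(\one)$ holds for every input ${\bf s}\in H^*(\WmodG,\Lambda)$, and can be applied with any such ${\bf s}$ (in particular with a ${\bf s}$ that itself depends on $q$ and ${\bf t}$, as long as the substitution is well-defined in the $q$-adic topology).

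With that observation in place, the main equation is a three-step formal substitution. First, specialize $J^\infty(q,{\bf s},z)=S^\infty_{\bf s}(\one)$ to ${\bf s}=\tau^\ke({\bf t})$, which is legitimate since $\tau^\ke({\bf t})={\bf t}+O(q)$, to obtain $J^\infty(q,\tau^\ke({\bf t}),z)=S^\infty_{\tau^\ke({\bf t})}(\one)$. Next, invoke the hypothesis that Conjecture~\ref{mirror big intro} holds for $(W,\G,\theta)$ to rewrite the right-hand side as $S^\ke_{\bf t}(\one)$. Finally, apply the first preceding semi-positive corollary at the given $\ke\geq 0+$ to replace $S^\ke_{\bf t}(\one)$ by $J^\ke(q,{\bf t},z)/J^\ke_0(q)$. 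Chaining these three equalities yields precisely the asserted formula. The explicit expression $\tau^\ke({\bf t})=({\bf t}+J^\ke_1(q))/J^\ke_0(q)$ requires no fresh argument: it is exactly the content of the second preceding corollary, which in turn follows by matching $z^{-1}$-coefficients on the two sides of $J^\ke/J^\ke_0=S^\ke_{\bf t}(\one)$ after expanding $(z-\psi)^{-1}$ in the defining series of $S^\ke_{\bf t}(\one)$ and comparing with~\eqref{string transf intro}.

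The final clause of the Corollary is then automatic: Theorem~\ref{Main} verifies the hypothesis Conjecture~\ref{mirror big intro} in precisely the setting specified there, so the conclusion holds unconditionally in those cases. There is essentially no obstacle at this level; all of the genuinely substantive work has been shifted into Theorem~\ref{Birkhoff-intro} (together with the semi-positive dimension counts forcing $P^\ke=J^\ke_0\one$) and into the virtual-localization proof of Theorem~\ref{Main}, and the present statement is a transparent repackaging of those two ingredients.
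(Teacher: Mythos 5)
Your proof is correct and follows the same logical route the paper implicitly uses: chain the string-equation identity $J^\infty=S^\infty(\one)$ (specialized at ${\bf s}=\tau^\ke({\bf t})$), Conjecture~\ref{mirror big intro}, and Corollary~\ref{semipositive-intro } in that order, with the formula for $\tau^\ke({\bf t})$ supplied by the second preceding corollary. The side remark that $J^\infty_0\equiv 1$ is a consequence of the asymptotic $J^\infty=\one+{\bf t}/z+O(1/z^2)$ (it does not even need the semi-positive corollary), but this does not affect the argument.
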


Directly from definitions, the coefficients $J^\ke_0(q)$ and $J^\ke_1(q)$ for $\ke >0$
depend only on the first stability chamber $(0,\frac{1}{\beta(L_\theta)}]$ and are therefore truncations of the
corresponding series $I_0$ and $I_1$ for the $0+$ stability condition. 
These are the first two terms in the $1/z$-expansion of the {\it small} $I$-function of $\WmodG$, obtained by putting
${\bf t}=0$ in the big $I$-function
\begin{equation}
\label{small I} I^{\mathrm{small}}(q,z):=I(q,0,z)=I_0(q)+\frac{I_1(q)}{z}+O(1/z^2).
\end{equation}
The small $I$-function for a toric variety, or for a complete intersection in a toric variety was introduced by Givental (it differs from \eqref{small I} by an exponential factor); 
quasimap theory extends the notion to general GIT targets $\WmodG$. Geometrically, it is obtained by summing over all classes $\beta$ the $\CC^*$-equivariant (virtual) residues
of certain distinguished fixed-point components in the unpointed graph spaces $QG^{0+}_{0,0,\beta}(\WmodG)$, see \cite{CK}, \cite{CKM}, and \S4-\S5 of this paper. 
These residues are explicitly computable in general and closed hypergeometric-type
formulas for the small $I$-functions are known for almost all interesting examples of targets, such as the ones listed in \S\ref{Examples}.

Givental's Mirror Theorems for semi-positive toric complete intersections \cite{Givental}, and for local toric targets \cite{Givental-elliptic}, relate the {\it small} (Gromov-Witten)
$J$-function to the small 
$I$-function via a change of variables  called the {\it mirror map} on the small parameter space $H^{\leq 2}(\WmodG)$.

Specializing Corollary \ref{semi-positive mirror} to $\ke =0+$ and ${\bf t}=0$ and using the string and divisor equations on the Gromov-Witten side gives precisely this 
type of relation
for all $\WmodG$ (after restoring the omitted exponential factors) and recovers Givental's results in the toric cases.

Furthermore, the quantum corrections to the mirror map are given by 
two-pointed quasimap invariants.
For example, if $\WmodG$ is Calabi-Yau, the classical mirror map 
$$q\mapsto q e^{\int_\beta  \frac{I_1(q)}{I_0(q)}}$$
is determined by
\begin{equation*}\label{mirror map intro}
\frac{I_1(q)}{I_0(q)}=\sum_i\gamma_i\sum_{\beta\neq 0}
q^\beta\lan\gamma^i,\one\ran^{0+}_{0,2,\beta}.
\end{equation*}
with the first sum  only over those $\gamma_i$'s which form a basis of $H^{2}(\WmodG)$. 

More generally, by first taking derivatives $d/dt_{j_1},\dots ,d/dt_{j_n}$ in the equality of Corollary \ref{semi-positive mirror}, then specializing at ${\bf t}=0$ and 
applying the string and divisor equations on the Gromov-Witten side one obtains the following

\begin{Cor}\label{multipoint} Assume that Conjecture \ref{mirror big intro} holds for the semi-positive triple $(W,\G,\theta)$. Then for every stability parameter $\ke\geq 0+$ and every $n\geq 1$
we have 
\begin{align}\label{multipoint formula}e^{\frac{1}{z} \frac{J^\ke_1(q)}{J^\ke_0(q)}}
&\sum_i\gamma_i\sum_\beta q^\beta e^{\int_\beta  \frac{J^\ke_1(q)}{J^\ke_0(q)}} \lan  \frac{\gamma^i}{z-\psi},\gamma_{j_1},\dots, \gamma_{j_n}\ran_{0,1+n,\beta}^\infty =\\
\nonumber &(J_0^\ke(q))^{n-1} \sum_i\gamma_i\sum_\beta q^\beta \lan  \frac{\gamma^i}{z-\psi},\gamma_{j_1},\dots, \gamma_{j_n}\ran_{0,1+n,\beta}^\ke
\end{align}
In particular, \eqref{multipoint formula} is
true in all cases covered by Theorem \ref{Main}. 
\end{Cor}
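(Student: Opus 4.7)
The plan is to derive \eqref{multipoint formula} from Corollary \ref{semi-positive mirror} by differentiating that identity in the ${\bf t}$-variables, specializing to ${\bf t}=0$, and invoking the string and divisor equations on the Gromov--Witten side to package the resulting $\tau_0$-insertions into the exponential prefactors of \eqref{multipoint formula}. Since $J_0^\ke(q)$ is independent of ${\bf t}$ and $\partial\tau^\ke/\partial t_i = \gamma_i/J_0^\ke(q)$, applying $\partial^n/\partial t_{j_1}\cdots\partial t_{j_n}$ to
$$J^{\infty}(q, \tau^{\ke}({\bf t}), z) \,=\, \frac{J^{\ke}(q, {\bf t}, z)}{J_0^{\ke}(q)}$$
and setting ${\bf t}=0$ yields, after multiplying through by $(J_0^\ke(q))^n$,
$$\Bigl(\tfrac{\partial^n J^{\infty}}{\partial \tau_{j_1}\cdots\partial \tau_{j_n}}\Bigr)(q,\tau_0,z) \,=\, \bigl(J_0^{\ke}(q)\bigr)^{n-1}\Bigl(\tfrac{\partial^n J^{\ke}}{\partial t_{j_1}\cdots\partial t_{j_n}}\Bigr)(q,0,z),$$
where $\tau_0 := J_1^{\ke}(q)/J_0^{\ke}(q)$ lies in $H^{\leq 2}(\WmodG,\Lambda)$ by the dimension arguments of \S\ref{semi-positive}.

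Next, each side is identified as a correlator series. By the definition of $J^\ke$ recalled in $\S 5$, the RHS equals $(J_0^\ke)^{n-1}$ times $\sum_i\gamma_i\sum_\beta q^\beta\,\lan\gamma^i/(z-\psi),\gamma_{j_1},\dots,\gamma_{j_n}\ran^\ke_{0,1+n,\beta}$, together with polynomial-in-$z$ contributions from the linear-in-${\bf t}$ part of $J^\ke$ that cancel analogous contributions on the LHS. The LHS in turn expands as
$$\sum_i \gamma_i \sum_\beta q^\beta \sum_{m\geq 0} \tfrac{1}{m!}\bigl\lan \tfrac{\gamma^i}{z-\psi},\gamma_{j_1},\dots,\gamma_{j_n},\tau_0,\dots,\tau_0\bigr\ran^\infty_{0,1+n+m,\beta},$$
plus the matching polynomial contributions just mentioned. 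Writing $\tau_0 = a\,\one+D$ according to the splitting $H^{\leq 2}=H^0\oplus H^2$, the string equation applied to each $\one$-insertion produces a factor $1/z$, while the divisor equation applied to each $D$-insertion produces $(D/z)+\int_\beta D$; summing the binomial expansion over $m\geq 0$ yields $e^{(a+D)/z}\,e^{\int_\beta D} = e^{\tau_0/z}\,e^{\int_\beta \tau_0}$ (using $\int_\beta \one = 0$), which is exactly the prefactor appearing in the LHS of \eqref{multipoint formula}.

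The main technical point is the careful application of the string and divisor equations to the formal descendant insertion $\gamma^i/(z-\psi)$: both equations act at the distinguished marking by shifting $\psi$-powers and, in the case of the divisor equation, by multiplying $\gamma^i$ by $D$; the $1/z$ factors above arise by resumming the resulting geometric series in $\psi/z$ against $\gamma^i$. Once this bookkeeping (together with the matching of the $\gamma_{j_1}/z$ constant terms in the $n=1$ case) is in place, the two sides match term by term and \eqref{multipoint formula} follows whenever Conjecture \ref{mirror big intro} is known, in particular for all targets covered by Theorem \ref{Main}.
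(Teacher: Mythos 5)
Your proof is correct and takes exactly the approach the paper indicates in the sentence preceding the statement: differentiate Corollary \ref{semi-positive mirror} in $t_{j_1},\dots,t_{j_n}$, set ${\bf t}=0$, and then resum the $\tau_0 = J_1^\ke/J_0^\ke$ insertions on the Gromov--Witten side using the string and divisor equations (together with the projection-formula identity $\sum_i\gamma_i\lan D\gamma^i\psi^a,\dots\ran = D\sum_i\gamma_i\lan\gamma^i\psi^a,\dots\ran$ that lets the $D/z$ powers pull out as the cohomological exponential $e^{\tau_0/z}$). The chain-rule bookkeeping giving the factor $(J_0^\ke)^{n-1}$, the use of $\int_\beta\one=0$ to reduce $e^{\int_\beta\tau_0}$ to the $H^2$-part, and the handling of unstable/$\beta=0$ terms are all as in the paper; the only cosmetic gap is the omitted remark that the derivative of $J$ produces $\gamma^i/(z(z-\psi))$, so one must multiply through by $z$ to reach the form stated in \eqref{multipoint formula}.
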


From this perspective, Conjecture \ref{mirror big intro} and Theorem \ref{Main} generalize toric mirror theorems in several different directions: they
apply to general GIT quotients $\WmodG$ rather than just those with abelian group $\G$, 
there is no semi-positivity restriction, they apply to the {\it big} parameter space $H^*(\WmodG)$, and to all stability conditions $\ke\geq 0+$. In addition,
the transformation \eqref{string transf intro} generalizes the mirror map.

\subsection{Wall-crossing for $J^\ke$-functions} One consequence of Corollary \ref{semi-positive mirror} is that, in the semi-positive cases,
the big $J^\ke$ function lies on Givental's Lagrangian cone $\cL ag_{\WmodG}$
encoding the genus zero Gromov-Witten theory of $\WmodG$. Conjecture \ref{big J} asserts that this remains true for general targets and Theorem \ref{equiv Thm2} proves this 
assertion when there is a torus action with isolated fixed points {\it and} isolated $1$-dimensional orbits. 
We refer the reader to \S\ref{big J wall-crossing} for a detailed formulation.
In this case we do not know an elegant expression similar to \eqref{string transf intro} for the required transformation of coordinates $\tau^{\infty, \ke}({\bf t})$.

\subsection{Further directions} With appropriate modifications, the methods introduced in this paper extend to orbifold GIT quotients and will be 
used by the authors in a forthcoming work to obtain similar
$\ke$-wall-crossing formulas for such targets. In particular, we will obtain orbifold mirror theorems more general than the ones announced in \cite{Iritani} for toric orbifolds
and complete intersections in them.

In a different direction, we expect wall-crossing formulas to hold in higher genus quasimap theory as well and plan to investigate these in future work. 
In particular, we conjecture in \S\ref{higher genus} that the wall-crossing is trivial for Fano triples $(W,\G,\theta)$ of Fano index at least $2$. This is proved in \cite{CKtoric2} for projective toric Fanos.

\subsection{Aknowledgments} Some of the ideas appearing in this work originate from early discussions with Rahul Pandharipande on big 
$I$-functions of toric varieties and from a 
joint project of the authors with Duiliu Emanuel Diaconescu and Davesh Maulik on the relation between the small $I$ and $J$-functions of the Hilbert scheme of points in $\CC^2$. 
It is a pleasure to thank all three of them here.
In addition, special thanks are due to Duiliu Emanuel Diaconescu for many helpful conversations during the preparation of the paper. We thank Jeongseok Oh for pointing out some typos in an earlier version.

The majority of the writing has been done
while the first named author visited KIAS in 2011-2012. He thanks KIAS for financial support, excellent
working conditions, and an inspiring research environment.

The research of I.C.-F. was partially supported by the NSA grant
H98230-11-1-0125 and the NSF grant DMS-1305004. The research of B.K. was partially supported by NRF-2007-0093859.

\section{Stable quasimaps} For the convenience of the reader we recall in this section the basics of the theory developed in \cite{CKM}.
Throughout the paper we will work over the ground field $\CC$.
\subsection{A class of GIT targets}  
Let $W=\Spec(A)$ be an affine algebraic variety with an action by a reductive algebraic group $\G$.
There are two natural quotients associated with
this data: the {\it quotient stack} $[W/\G]$, and the {\it affine quotient} $W/_{\mathrm {aff}}\G=\mathrm{Spec}(A^\G)$.

Denote by $\Pic^\G(W)$ the group of $\G$-linearized line bundles on $W$ and by $\chi(\G)$ the group of characters of $\G$. Each character
 $\theta$ determines a one-dimensional representation $\CC_\theta$ of $\G$, hence a linearized line bundle 
 $$L_\theta:=W\times\CC_\theta\in\Pic^\G(W).$$
 (We will often think of vector bundles in this paper  
 as geometric objects by identifying them with their total space.)
 
Associated to $\theta$ we have the GIT quotient
$$\WmodG:=W/\!\!/_{\theta}\G:=\mathrm{Proj}(\oplus_{n\geq 0}\Gamma(W,L_\theta^{\otimes n})^\G),$$
which is a quasiprojective variety, with a projective morphism
$$\WmodG\lra W/_{\mathrm{aff}}\G$$
to the affine quotient (see \cite[\S2]{King} for the GIT setup used in this paper). 

Let 
$$W^s=W^s(\theta)\;\;\;\;  {\mathrm {and}}\;\;\;\; W^{ss}=W^{ss}(\theta)$$
be the stable (respectively, semistable) open subsets determined by the choice of linearization.
The following will be assumed throughout: 

$(i)$ $\emptyset\neq W^s=W^{ss}.$

$(ii)$ $W^s$ is nonsingular.

$(iii)$ $\G$ acts freely on $W^s$.

It follows that
$\WmodG$ is a nonsingular variety, which coincides with the stack quotient $[W^s/\G]$. In particular,
it is naturally an open substack in $[W/\G]$. The relative polarization over the affine quotient is given by the descended
bundle
$$\underline{L}_\theta:=W^s\times_\G L_\theta.$$
We will also use the notation $\cO(\theta)$ for the polarization. Further, by replacing $\theta$ by a multiple, we may (and will from now on) assume that 
$\cO(\theta)$ is relatively very ample over the affine quotient. 
(Here and in the rest of the paper the above notation is always understood as the usual {\it mixed construction}. Namely, 
given a principal $\G$-bundle $P$ over a base
scheme or algebraic stack $B$, and a scheme U with $\G$-action, we put
$$P\times_\G U:=[(P\times U)/\G].$$
It is an algebraic stack with a representable morphism to $B$ 
$$\rho : P\times_\G U\lra B$$
which is in the {\' e}tale topology a locally trivial fibration
with fiber $U$. )

\subsection{Maps from curves to $[W/\G]$}
Let $(C,x_1,\dots ,x_k)$ be a prestable pointed curve, i.e., a connected projective curve (of some arithmetic genus $g$),
with at most nodes as singularities, together with $k$ distinct and nonsingular marked points on it.
A map $[u]:C\lra [W/\G] $ corresponds to a pair $(P,\tilde{u})$, with 
$$P\lra C$$ 
a principal $\G$-bundle on $C$ and
$$\tilde{u}:P\lra W$$ 
a $\G$-equivariant morphism. Equivalently (and most often), we will consider the data $(P,u)$, with
$$u:C\lra P\times_\G W$$
a section of the fiber bundle $\rho : P\times_\G W\lra C$. Then
$[u]: C \ra [W/\G]$ is obtained as the composite
$$C\stackrel{u}{\ra} P\times _G W \ra [W/\G].$$
Note also that the composition 
$$C\stackrel{[u]}{\ra} [W/\G]\ra W/_{\mathrm {aff}}\G$$
is always a constant map, since $C$ is projective.

\subsection{Numerical class of a map to $[W/\G]$} Recall that $L\mapsto [L/\G]$ gives an identification
$\Pic ^\G(W) = \Pic ([W/\G])$.

Let $(C,P,u)$ be as above.
For  a $\G$-equivariant line bundle $L$ on $W$
we get an induced line bundle $u^*(P\times _\G L) = [u]^*([L/\G])$ on $C$.

By definition, the {\it numerical class} $\beta$ of $(P,u)\in \Map (C, [W/\G])$ is the group homomorphism  
$$\beta:\Pic^\G(W) \ra \ZZ,\;\;\;
 \beta(L) =\deg _C(u^*(P\times _\G L)).$$ 
If we denote
$$\cL_\theta:=u^*(P\times _\G L_\theta)=P\times_G\CC_\theta$$ and if $[u]$ has image contained in the GIT quotient $\WmodG$, then 
$\cL_\theta=[u]^*\cO(\theta)$, so
$\beta(L_\theta)$ is the usual degree of the map with respect to the polarization $\cO(\theta)$.
              
\subsection{Quasimaps to $\WmodG$ and $\ke$-stability} Fix integers $g,k\geq 0$. We denote by $\fMgk([W/\G],\beta)$ the moduli stack parametrizing
families of tuples
$$((C,x_1,\dots,x_k),P,u)$$
with $(C,x_1,\dots,x_k)$ a prestable curve of genus $g$, $P\lra C$ a principal $\G$-bundle on $C$, and $u$ a section of
$P\times_\G W\lra C$, of degree $\beta$. It contains the Deligne-Mumford stack 
$$M_{g,k}(\WmodG,\beta)$$
of (stable) maps from nonsingular pointed curves to $\WmodG$ as an open substack. However,
it is in general a nonseparated Artin stack of infinite type. More manageable moduli spaces are obtained by imposing
certain {\it stability conditions}.
These will give
compactifications of $M_{g,k}(\WmodG,\beta)$ over $W/_{\mathrm {aff}}\G$, which are
 Deligne-Mumford and carry canonical perfect obstruction theories.

For example, one such stability condition is obtained by requiring first that the image of $[u]$ is contained in $[W^s/\G]=\WmodG$, or,
equivalently that the section $u$ lands inside $P\times_\G W^s$, and second, that the map 
$$[u]:(C,x_1,\dots, x_k)\lra\WmodG$$
is Kontsevich-stable. This of course leads to the familiar moduli stack of stable maps
$\overline{M}_{g,k}(\WmodG,\beta)$.

There are other stability conditions for which
resulting theory is that of {\it $\ke$-stable quasimaps from curves to} $\WmodG$, developed in \cite{CKM} (and inspired by earlier works \cite{MOP}, \cite{CK}, \cite{MM}).
We recall briefly the main points, and refer the reader to {\it loc. cit.} for the details.

\begin{Def}\label{qmap} A quasimap to $\WmodG$ is a map $((C,x_1,\dots,x_k),P,u)$ to the quotient stack
such that for every irreducible component $C_i$ of $C$, with generic point $\xi_i$, we have $u(\xi_i)\in P\times_\G W^s$.
\end{Def}

Hence a quasimap has at most finitely many {\it base points},
i.e., points on $C$ which are mapped into the unstable locus of $W$ by the section $u$.

By \cite[ Lemma 3.2.1]{CKM}, 
if $(C,P,u)$ is a quasimap,
then for any subcurve $C'$ of $C$, the
class $\beta'$ of the restriction of the quasimap to $C'$ satisfies
$$\beta'(L_\theta)\geq 0,\;\; {\mathrm{ with\; equality\; iff}}\;\; \beta'=0.$$
\begin{Def}\label{t-effective}
We say that an element $\beta\in \Hom_\ZZ(\Pic^\G(W),\ZZ)$ is $L_\theta$-{\it effective} if it can be represented as a finite sum of classes of quasimaps. 
\end{Def}
The $L_\theta$-effective classes
form a semigroup, 
denoted $\mathrm {Eff}(W,\G,\theta)$.

\begin{Rmk} 
The "nondegeneracy at generic points" imposed in the definition of a quasimap is clearly an
open condition on the base of families of maps to the stack $[W/\G]$. It is shown in \cite[Theorem 3.2.5 and Remark 3.2.10]{CKM} that quasimaps
to $\WmodG$ with given underlying pointed curve and fixed $\beta(L_\theta)$ form a bounded family. Hence we have an open Artin substack
$$\mathfrak{Qmap}_{g,k}(\WmodG,\beta)\subset \fMgk([W/\G],\beta)$$
which is of finite type over the stack $\fMgk$ of marked prestable curves.
\end{Rmk}

\begin{Def}\label{prestable} A quasimap is {\rm prestable} if its base points are away from the nodes and markings of the underlying curve.
\end{Def}

\begin{Def}\label{length}
The {\em length} $\ell(x)$ at a point $x\in C$ of a prestable quasimap $((C,x_i),P,u)$ to $\WmodG$ is 
 the order of contact of $u(C)$ with the unstable subscheme $P\times_\G W^{us}$ at $u(x)$. Precisely, 
 consider the ideal sheaf $\mathcal{J}$ of the closed subscheme $P\times_\G W^{us}$ of $P\times_\G W$. Then
 \begin{equation}\label{multiplicity}\ell(x)=\mathrm{length}_x(\mathrm{coker}(u^*\mathcal{J}\lra \cO_C)),\end{equation}
 \end{Def}

\begin{Def}\label{stable qmap} Fix $\ke\in\QQ_{>0}$. A quasimap 
$$((C,x_1,\dots,x_k),P,u)$$
is called $\ke$-{\rm stable} if it is prestable and the following two conditions hold 
\begin{enumerate}
\item $\omega _C (\sum_{i=1}^k x_i ) \otimes \mathcal{L}_{\theta} ^\ke$ is ample.
\item  $\ke  \ell(x) \le 1$ for every point $x$ in $C$.
\end{enumerate} 
Here $\cL_\theta=P\times_\G\CC_\theta =u^*(P\times_\G L_\theta)$ .
\end{Def}

It is easy to see that
$\ke$-stable quasimaps form an open moduli substack in $\mathfrak{Qmap}_{g,k}(\WmodG,\beta)$. We will denote it by $\QmapWe$.
The following is proved in \cite[Theorem 7.1.6]{CKM}.

\begin{Thm}\label{proper}
The stack $\QmapWe$ is a Deligne-Mumford stack, separated,  of finite type, with a natural {\rm proper} morphism
$$\QmapWe\lra \mathrm{Spec}(A(W)^\G).$$
\end{Thm}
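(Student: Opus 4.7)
The plan is to establish each assertion in turn, leveraging the fact from the preceding Remark that $\mathfrak{Qmap}_{g,k}(\WmodG,\beta)$ is already an open substack of finite type over $\fMgk$. Finite type for $\QmapWe$ is then immediate, since it is open in $\mathfrak{Qmap}_{g,k}(\WmodG,\beta)$. For the Deligne--Mumford property, I would reduce finiteness of automorphisms to a two-step analysis: an automorphism of a quasimap is a pair $(\varphi,\psi)$ of an automorphism $\varphi$ of $(C,x_1,\dots,x_k)$ and a compatible bundle automorphism $\psi$ of $P$ preserving $u$. Since $u$ takes values generically in $P\times_\G W^s$ and $\G$ acts freely on $W^s$, the bundle piece $\psi$ is forced to be the identity by its generic behavior. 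Ampleness of $\omega_C(\sum x_i)\otimes\cL_\theta^\ke$ from condition~(1) then plays the role of Kontsevich stability: every rational tail must carry enough special points---markings, nodes, or positive $\cL_\theta^\ke$-degree---to rigidify it, leaving only finitely many choices of $\varphi$.

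For separatedness I would apply the valuative criterion for Deligne--Mumford stacks. Given two $\ke$-stable families over $\Spec R$ (with $R$ a DVR) that agree over $\Spec K$, I would show the generic isomorphism extends over the closed point. On the dense open complement of base points in each special fiber, the quasimap factors through $\WmodG$, so the argument reduces to the familiar separatedness for Kontsevich-stable maps; meanwhile, the cokernel of $u^*\mathcal{J}\to\cO_C$ in \eqref{multiplicity} is supported on a zero-dimensional subscheme determined by its generic counterpart, so the base-point/length decorations match on both special fibers. Condition~(1) then rigidifies both underlying pointed curves and yields the desired extension.

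The main content is properness of the morphism to $\Spec(A(W)^\G)$. That morphism itself arises from the fact that the composition $C\to[W/\G]\to W/_{\mathrm{aff}}\G$ is constant on each quasimap, so a canonical image point in the affine quotient is assigned functorially. To verify the valuative criterion, given a family over $\Spec K$ together with a lift $\Spec R\to W/_{\mathrm{aff}}\G$ of its image, I would extend over $\Spec R$ after a finite base change by: (a) extending the generic $\G$-bundle $P_K$ to a $\G$-bundle over a nodal model $\mathcal{C}\to\Spec R$ using properness of moduli of bundles on degenerating curves; (b) extending $u_K$ to a rational section of $P\times_\G W\to\mathcal{C}$, which is possible since the target maps properly to $W/_{\mathrm{aff}}\G$ and the lift is given; (c) resolving the indeterminacies of this rational section by successive blowups at the special fiber, absorbing each indeterminacy either as a base point of prescribed length or as a rational bubble mapping with positive degree into $W^s$; (d) contracting rational components that violate the ampleness condition~(1), while respecting the length bound in~(2).

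The hard part will be steps (c)--(d): ensuring that the combined modification procedure terminates in a quasimap satisfying both stability conditions simultaneously, and that the resulting limit is canonical. The subtlety is that length and ampleness are coupled---bubbling to reduce the length at a point can leave the bubble too unstable for ampleness, while contracting a bubble can concentrate length and violate $\ke\ell\leq 1$. The key technical input is that the length function is additive under bubbling, so that the walls at $\ke\in\{1,\frac{1}{2},\dots,\frac{1}{\beta(L_\theta)}\}$ cleanly partition $\QQ_{>0}$ into stability chambers; within each chamber one expects a canonical $\ke$-stable model to exist and be unique, and verifying this uniqueness carefully is where the bulk of the work lies.
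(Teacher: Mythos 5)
The paper does not actually prove this theorem; it is stated as a recollection of Theorem~7.1.6 of \cite{CKM}, and the only ``proof'' in the present source is the citation. So there is no internal argument to compare yours against, and your proposal has to be judged on its own.

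Your high-level decomposition (finite type from openness in $\mathfrak{Qmap}_{g,k}(\WmodG,\beta)$; DM from freeness of the $\G$-action on $W^s$ together with the ampleness condition rigidifying rational components; separatedness and properness via the valuative criterion) is the right shape, and the DM argument in particular is essentially correct: an automorphism covering $\mathrm{id}_C$ and fixing $u$ must act as the identity where $u$ lands in $W^s$, hence everywhere, reducing to finiteness of curve automorphisms, which ampleness handles as in the Kontsevich case.

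There are, however, two genuine problems with the properness sketch. First, step~(a) invokes ``properness of moduli of bundles on degenerating curves,'' which is false: $\BunG$ is a smooth Artin stack that is neither separated nor universally closed. What one actually has available is a Langton-type statement (or, for general reductive $\G$, the affine Grassmannian/Beauville--Laszlo machinery) giving \emph{some} extension after a finite base change, but this extension is wildly non-unique --- you can perform arbitrary Hecke modifications at points of the closed fiber. This non-uniqueness is not a technicality you can wave away; it is precisely what the stability condition has to tame, and your sketch inherits the gap rather than closing it. Second, as you yourself note in the last paragraph, steps~(c)--(d) --- balancing base points against bubbles so that both the length bound $\ke\,\ell(x)\le 1$ and the ampleness of $\omega_C(\sum x_i)\otimes\cL_\theta^\ke$ hold, then proving the resulting model is unique --- are the entire content of the theorem. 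Saying ``one expects a canonical $\ke$-stable model to exist and be unique'' is an honest flag, but it means the proposal is a roadmap, not a proof. Similarly, in the separatedness sketch, the claim that the cokernel of $u^*\mathcal{J}\to\cO_C$ is ``determined by its generic counterpart'' needs an argument; the standard route is to first show the two candidate curve families agree (via ampleness), then that the bundles and sections agree on a dense open of the closed fiber where $u$ lands in $W^s$, and then propagate --- none of which is automatic. In short, your plan points at the right places but leaves the load-bearing steps unproved, and one of its ingredients ($\BunG$ proper) is simply wrong.
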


\begin{Rmk} The description of the points in the moduli spaces when $\ke$ varies follows easily from the definition of $\ke$-stability. We summarize it  below.
\begin{enumerate}
\item Assume $(g,k)\neq(0,0)$. Then a quasimap is a {\it stable map} to $\WmodG$ in the usual sense if 
and only if it is $\ke$-stable for every $\ke >1$. When $(g,k)=(0,0)$ the same holds with $\ke >2$.
Hence we have 
$$\mathrm{Q}^{\infty}_{g,k}(\WmodG,\beta)=\Mgk (\WmodG,\beta).$$
\item Fix $\beta\in \mathrm {Eff}(W,\G,\theta)$. For each $\ke\leq \frac{1}{\beta(L_\theta)}$ the length inequality in the stability condition is always satisfied, while the ampleness
requirement imposes first that the underlying curve $C$ has {\it no rational tails}
(irreducible components which are rational and carry exactly one special point - marking or node - on them), and second that on each {\it rational bridge} 
(irreducible component which is rational and carries exactly two special points) the line bundle $\cL_\theta$ has strictly positive degree. 

Since we may want to deal with all possible degrees $\beta$ at the same time, it is better to reformulate the stability condition in this case as
$$\omega _C (\sum_{i=1}^k x_i ) \otimes \mathcal{L}_{\theta} ^\ke\;\; \text {is ample for \underline{all}}\;\; \ke\in \QQ_{>0}.$$ 
We will call these either $(\ke=0+)$-stable quasimaps, 
or, as in \cite{CKM}, {\it stable quasimaps to $\WmodG$}; the resulting moduli stack will be denoted
$$\mathrm{Q}^{0+}_{g,k}(\WmodG,\beta),$$
or simply
$\QmapW$.
Note that these moduli spaces are empty for $(g,k)=(0,0), (0,1)$.
\item Assume for simplicity that $(g,k)\neq(0,0), (0,1)$. Then for every integer $1\leq e\leq \beta(L_\theta)-1$, the moduli space of $\ke$-stable quasimaps to $\WmodG$
with fixed numerical data $(g,k,\beta)$ stays constant when $\ke\in\left(\frac{1}{e+1},\frac{1}{e}\right]$.
The underlying curve of a quasimap parametrized by this constant moduli space is allowed only
rational tails of total degree at least $e+1$ (with respect to $L_\theta$), while the length at each base
point must be at most ${e}$. Hence for each fixed class $\beta$ the set $\QQ_{>0}$ is divided into chambers by finitely many walls $1,\frac{1}{2},\dots,\frac{1}{\beta(L_\theta)}$.
The two extreme chambers $(1,\infty)$ and $(0,\frac{1}{\beta(L_\theta)}]$ always correspond to stable maps and stable quasimaps, respectively.

\end{enumerate}

\end{Rmk}

\subsection{Obstruction theory}
The moduli stack of $\ke$-stable quasimaps comes with natural forgetful morphisms
$$\mu:\QmapWe\lra\fMgk,\;\;\; \nu:\QmapWe\lra\BunG,$$
where $\fMgk$ is the moduli stack of prestable curves and $\BunG$ is the
moduli stack of principal $\G$-bundles on the fibers of the universal curve
$\pi:\fC_{g,k}\lra\fMgk$.

There is a 
a universal family
$$((\cC^\ke,x_1,\dots, x_k),\cP,u)$$
over $\QmapWe$, where the universal curve
$$\pi:\cC^\ke\lra\QmapWe$$ 
is the pull-back of $\fC_{g,k}$ via $\mu$, $\cP$ is the pull-back of the
universal principal $\G$-bundle on (the universal curve over) $\BunG$ via $\nu$, and
$u$ is a section of the fiber bundle
$$\varrho:\cP\times_G W\lra\cC^\ke$$
(the universal section).

The canonical obstruction theory of $\QmapWe$, relative to the smooth Artin stack $\BunG$, is given by
the complex
\begin{equation}\label{obs theory}
\left (R^\bullet\pi_*(u^*\mathbb{R}T_\varrho)\right )^\vee,\end{equation}
with $\mathbb{R}T_\varrho$
the relative tangent complex of $\varrho$. 

The following result is part of \cite[Theorem 7.1.6]{CKM}.

\begin{Thm}\label{main}  If $W$ has only lci singularities, then the obstruction theory
(\ref{obs theory}) of $\QmapWe$ is perfect.
\end{Thm}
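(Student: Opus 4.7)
My plan is to work locally on $W$, exploit the lci hypothesis to represent the tangent complex of $W$ as a two-term complex of locally free sheaves, and then reduce perfectness of the obstruction theory to an injectivity statement that is guaranteed by the very definition of a quasimap.

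First, since $W$ has only lci singularities and carries a $\G$-action, I would work $\G$-locally and present $W$ as the zero scheme of a regular $\G$-equivariant section $s: V \to E$, with $V$ a smooth $\G$-variety and $E \to V$ a $\G$-equivariant vector bundle. With this presentation, $\mathbb{R}T_W$ is quasi-isomorphic to the two-term complex $[T_V|_W \xrightarrow{ds} E|_W]$ of $\G$-equivariant locally free sheaves placed in degrees $[0,1]$. Passing to the mixed construction and pulling back along the universal section $u: \cC^\ke \to \cP\times_\G W$ produces a representation of $u^*\mathbb{R}T_\varrho$ as a two-term complex $[T^0 \xrightarrow{d} T^1]$ of locally free sheaves on $\cC^\ke$. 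What remains is to show that $R\pi_*[T^0 \to T^1]$ is perfect of amplitude in $[0,1]$; perfectness itself follows from proper flatness of $\pi$ applied to a bounded complex of locally free sheaves, so the real content is the amplitude.

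To bound the amplitude, I would use the distinguished triangle $T^1[-1] \to [T^0 \to T^1] \to T^0 \xrightarrow{d} T^1$. Since $\pi$ has relative dimension one, $R^i\pi_*$ vanishes on coherent sheaves for $i\geq 2$, and the associated long exact sequence collapses to give $R^2\pi_*[T^0 \to T^1] = \mathrm{coker}(R^1\pi_* T^0 \to R^1\pi_* T^1)$. Relative Serre duality for the proper flat family of Gorenstein nodal curves $\pi$ identifies this map with the dual of $\pi_*(T^{1,\vee}\otimes\omega_\pi) \to \pi_*(T^{0,\vee}\otimes\omega_\pi)$. Since $\pi_*$ preserves injections, the question reduces to showing that $d^\vee: T^{1,\vee}\to T^{0,\vee}$ is an injection of sheaves on $\cC^\ke$.

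The main obstacle---and the crucial use of the quasimap hypothesis---is establishing this injectivity. By Definition \ref{qmap}, $u$ sends the generic point of each irreducible component of every fiber of $\pi$ into $\cP\times_\G W^s$, and by assumption (ii) on the triple $(W,\G,\theta)$ the stable locus $W^s$ is smooth. At such points $\Omega_W$ is locally free, so the conormal sequence $0 \to N_{W/V}^\vee \to \Omega_V|_W \to \Omega_W \to 0$ is locally split exact, and split injections remain injective under any pullback. Therefore $d^\vee$ is injective at the generic point of each component of $\cC^\ke$, and since $d^\vee$ is a map of locally free sheaves its kernel is torsion-free and hence vanishes as soon as it vanishes generically. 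Combining everything, $R^2\pi_*(u^*\mathbb{R}T_\varrho)=0$, so $(R\pi_*(u^*\mathbb{R}T_\varrho))^\vee$ is a perfect complex concentrated in degrees $[-1,0]$, as required. Without the quasimap condition one could not rule out $u$ landing entirely in the singular locus of $W$ along some component, in which case the above cokernel would genuinely be nonzero; it is exactly this control that the definition of a quasimap is designed to supply.
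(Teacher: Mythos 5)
Your strategy is the right one and, to the best of my knowledge, is essentially the argument used in \cite{CKM}: present $\mathbb{R}T_W$ as a two-term complex of locally free sheaves via the lci hypothesis, observe that the only possible failure of perfect amplitude $[0,1]$ for $R\pi_*(u^*\mathbb{R}T_\varrho)$ is a nonzero $\mathcal{H}^2$, dualize via Serre duality, and kill the resulting kernel using the quasimap condition (generic points land in the smooth $W^s$, so the conormal map is a locally split injection there). Your closing remark about exactly why the quasimap condition is needed is also the correct heuristic.

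There is, however, a genuine gap in how the argument is carried out: you work on the universal curve $\cC^\ke$ when the argument needs to be done on geometric fibers $C_s$. Three related problems arise. First, ``perfect of amplitude $[0,1]$'' is a Tor-amplitude condition on $R\pi_*(u^*\mathbb{R}T_\varrho)$, which is by definition checked after derived restriction to each residue field; since $R^2\pi_*$ is the top degree, this is equivalent to the vanishing of $\mathbb{H}^2\big(C_s,\,(u^*\mathbb{R}T_\varrho)|_{C_s}\big)$ for every geometric fiber, and it is that fiberwise vanishing you must prove. Second, the sheaf-level relative Serre duality you invoke, identifying $R^1\pi_*T^i$ with $\big(\pi_*(T^{i,\vee}\otimes\omega_\pi)\big)^\vee$ over the base, is not valid in general (the relevant $R\pi_*$ need not be concentrated in a single degree, nor locally free); on a single proper Gorenstein fiber $C_s$, ordinary Serre duality for finite-dimensional vector spaces gives cleanly that $\mathbb{H}^2 \cong \ker\big(H^0(T^{1,\vee}_s\otimes\omega_{C_s}) \to H^0(T^{0,\vee}_s\otimes\omega_{C_s})\big)^{\vee}$. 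Third, and most concretely, the step ``the kernel of $d^\vee$ is torsion-free and hence vanishes as soon as it vanishes generically'' is a step that can fail on $\cC^\ke$: the moduli stack $\QmapWe$ is a finite-type Deligne--Mumford stack that is not known to be reduced, so $\cC^\ke$ may have embedded associated points, and then a nonzero subsheaf of a locally free sheaf can vanish at all generic points. (Also, even if $d^\vee$ were injective on $\cC^\ke$, this would not by itself imply injectivity on each fiber, which is what you need; injectivity of a vector-bundle map does not restrict along closed subschemes.) On a single geometric fiber $C_s$ — a reduced nodal curve over a field — the torsion-free/generic-point reasoning is unimpeachable, and the quasimap condition is a fiberwise condition anyway, so the fix is simply to carry out steps $2$--$4$ on each $C_s$ and then invoke base change in top degree together with Nakayama to conclude.
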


The lci condition implies that $W$ has a (virtual) tangent bundle $T_W$ in the $\G$-equivariant $K$-group. 
Hence $\det (T_W)$ 
is a well-defined element in $\Pic^{\G}(W)$.
The virtual dimension of $\QmapWe$ is
$$\mathrm{vdim}(\QmapWe)=\beta(\det (T_W)) + (1-g)(\dim(\WmodG)-3)+k.$$
Note that it is independent on $\ke$.

For classes $\beta$ for which the moduli space $\QmapWe$ contains a point represented by a regular map $u:C\lra\WmodG$, we may view $\beta$ as the second
homology class $u_*[C]\in H_2(\WmodG, \ZZ)$ and then we have
$$\beta(\det (T_W))=\int_\beta c_1(T_{\WmodG}).$$
Note, however, that for some $L_\theta$-effective classes it is possible that every point of $\QmapWe$ corresponds to a  quasimap with base-points. The simplest example is
when $W=\CC$, $\G=\CC^*$ with the natural action, linearized by the identity character (so that $\WmodG$ is a single point) and we take $\beta\neq 0$. For a less trivial example,
consider the conic in $\PP^2$ (see Example 2.8.3 below) and take the classes with $\beta(L_\theta)$ a positive odd integer.

\subsection{Graph spaces} \label{action}
We will also need moduli stacks of maps for which the domain curve
contains a {\it parametrized} $\PP^1$, see \S 7.2 of \cite{CKM}. These are called graph spaces. Even though we'll
only use later the genus zero case of the results of this subsection, everything makes sense for arbitrary
genus and we treat things in this generality.

Let $\mathfrak{G}_{g,k,\beta}([W/\G])$ denote the Artin stack  parametrizing tuples
$$((C, x_1,\dots,x_k), P, u, \varphi ),$$
where the new data
$\varphi$ is a morphism from $C$ to $\PP ^1$ of degree one (i.e, such that $\varphi_*[C]=[\PP^1]$).
Alternatively, these are all maps $$C\lra [W/\G]\times \PP^1$$ of class $(\beta, 1)$.
The curve $C$ has one irreducible component $C_0$ such that $\varphi |_{C_0}:C_0\tilde{\ra}\PP^1$
is an isomorphism, while the rest of the curve is contracted by $\varphi$.

By imposing stability conditions as in \S 2 we obtain {\it $\ke$-stable quasimap graph spaces}, denoted
$$QG^{\ke}_{g,k,\beta}(\WmodG).$$ In this case
the ampleness part of the stability condition {\it does not} involve the parametrized component. Precisely, we require as before that
the data $((C, x_1,\dots,x_k), P, u)$ is a prestable quasimap, the length inequality $\ke  \ell(x) \le 1$ for every point $x$ in $C$,
and the modified ampleness condition

$$\omega _{C'} (\sum x_i +\sum y_j) \otimes \mathcal{L}_{\theta} ^\ke {\text{ is ample,}}$$
where $C'$ is the closure of ${C\setminus C_0}$, $x_i$ are the markings on $C'$ and $y_j$ are the nodes $C'\cap C_0$.

In the chamber $1<\ke\leq\infty$ 
we have the usual stable map theory graph space
$$G_{g,k,\beta}(\WmodG)=QG^{\infty}_{g,k,\beta}(\WmodG) =\overline{M}_{g,k}(\WmodG\times\PP^1,(\beta, 1)).$$
For the other extreme chamber corresponding to $\ke= 0+$, the quasimap graph space 
$$QG^{0+}_{g,k,\beta}(\WmodG)$$
was denoted $\mathrm{Qmap}_{g,k}(\WmodG,\beta;\PP^1)$ in \cite[\S7.2]{CKM}.

If we choose coordinates $[\zeta_0,\zeta_1]$ on 
$\PP^1$, then we have the standard $\CC^*$ action, given by
$$t[\zeta_0,\zeta_1]=[t\zeta_0,\zeta_1],\;\;\; t\in\CC^* .$$
It induces actions on the $\ke$-stable quasimap graph spaces which will be
of great importance for the rest of the paper.

The analogues of Theorems \ref{proper} and \ref{main} hold for graph spaces, see \cite[Theorem 7.2.2]{CKM}. 
The only difference is that the moduli stack $\fMgk$ of prestable curves is replaced
by the analogous moduli of prestable curves with one parametrized $\PP^1$ (i.e., by the stack $\fMgk(\PP^1,1)$) in the description of the relative obstruction theory.
With this change, the obstruction theory is still given by the formula \eqref{obs theory}.

\subsection{Quasimap invariants} The condition that base-points do not appear at the markings ensures there are well-defined evaluation maps
$$ev_i :\QmapWe\lra\WmodG,\;\;\; i=1,\dots, k.$$ The moduli spaces also carry the tautological cotangent line bundles at the markings, whose first Chern classes we
denote by $\psi_i$, $i=1,\dots ,k$, as is customary in Gromov-Witten theory. These structures exist on the graph spaces as well.

Assume that $\WmodG$ is projective. For nonnegative integers $a_i$ and cohomology classes $\gamma_i\in H^*(\WmodG,\QQ)$, the
associated descendant $\ke$-quasimap invariant of $\WmodG$ is the rational number
$$\int_{[\QmapWe]^{\mathrm{vir}}}\prod_j  ev_j^*(\gamma_j)\psi_j^{a_j}.$$

If $\WmodG$ is only quasiprojective one may still define a good theory of {\it equivariant} quasimap invariants in the presence of an action with 
good properties by an algebraic torus, 
as explained in \S6.3 of \cite{CKM} and at the beginning of \S\ref{equivariant case} later in this paper. 
 
\subsection{Examples}\label{Examples} We list here several classes of targets for which the theory of quasimaps applies.
\begin{EG} {\bf Smooth toric varieties.} These are GIT quotients $\WmodG$ with $W\cong\CC^N$ a vector space and $\G=(\CC^*)^l$ abelian. 
Their quasimap theory (in the projective case) was
developed first in \cite{CK}. There are also non-compact toric targets with very interesting theories. For example, taking $W=\CC^n\oplus\CC$, $\G=\CC^*$ acting with
weights $(1,\dots,1,-m)$, $m\in\ZZ$, and $\theta$ the identity character gives the GIT quotient $\WmodG=|\cO_{\PP^{n-1}}(-m)|$, the total space of the line bundle $\cO(-m)$ on $\PP^{n-1}$.
\end{EG}

\begin{EG} {\bf Type $A$ flag manifolds.} These are also quotients of a vector space, but with 
$\G$ a product of  $GL_{k_i}(\CC)$'s and $\theta$ given by the determinant character on each factor
(see e.g. \cite{BCK} for this quotient description). In particular, taking $W=\Hom (\CC^r,\CC^n)$ and $\G=GL_r(\CC)$ with the obvious action gives the Grassmannian $G(r,n)$
as $W/\!\!/_{\det}\G$. The quasimap theory of the Grassmannian recovers the {\it stable quotients} theory of \cite{MOP}. Projective space $\PP^{n-1}$ may be viewed either as a special case of flag manifold, or as a toric example.
\end{EG}

\begin{EG} {\bf Projective varieties in general.}
Let $X\subset\PP^{n-1}=\CC^n/\!\!/\CC^*$ be a smooth projective variety. Then $X=W/\!\!/\CC^*$ with $W=C(X)$, the affine cone over $X$ and we have 
the proper moduli spaces of $\ke$-stable quasimaps to $X$. However, for $\ke\leq 1$ the obstruction theory will be perfect precisely when $X$ is a complete intersection.
(For $\ke >1$ and arbitrary $X$ one recovers the usual stable maps to $X$, with their canonical perfect obstruction theory.)
\end{EG}

\begin{EG}\label{zero loci} {\bf Zero loci of sections of homogeneous bundles.}
The previous example of complete intersections in projective space generalizes as follows. Let $W,\G, \theta$ be as in the general set-up and assume that $W$ is smooth.
Let $E$ be a finite dimensional $\G$-representation. Let $W\times E$ be the corresponding $\G$-equivariant vector bundle on $W$ and let 
$${\underline {E}}:=W^s\times_{\G} E
$$
be the induced vector bundle on $\WmodG$. Let $s\in H^0(W,W\times E)^{\G}$ be a $\G$-invariant global section, inducing ${\underline{s}}\in H^0(\WmodG,{\underline{E}})$.
Assume that ${\underline{s}}$ is regular and that its zero locus $X:=Z({\underline{s}})$ is smooth. Then $X=Z/\!\!/_\theta \G$ with $Z:=Z(s)$. Furthermore, $Z$ has only lci
singularities (contained in the unstable locus) hence the moduli spaces of quasimaps to $X$ will carry perfect obstruction theories for all $\ke$.

For toric $\WmodG$ this construction gives precisely the complete intersections in toric varieties (since the representation $E$ splits into one-dimensional summands), but there
are many interesting targets $Z/\!\!/\G$ with $\underline{E}$ non-split when $\G$ is non-abelian. For example, as explained in \cite{BCK},
all flag manifolds of classical types $B,C$, and $D$ are realized 
in this fashion, with $\WmodG$ a flag manifold of type $A$. 
For another example (suggested to us by E. Scheidegger), 
consider on the Grassmannian $G(2,6)$ the vector bundle $\underline{E}=Sym^3(S^\vee)$, where $S^\vee$ is the dual of the tautological rank $2$
sub-bundle. $\underline{E}$ has rank $4$ and first Chern class $6$, and descends from the third symmetric power of the standard representation of $GL_2(\CC)$. The zero locus
of a regular section defines a Calabi-Yau fourfold of the form $Z/\!\!/GL_2(\CC)$ which is not a complete intersection in $G(2,6)$.
\end{EG}

\begin{EG} {\bf Local targets.}\label{local targets} Fix again $W,\G, \theta$ as in the general set-up, and assume $\WmodG$ is projective.
Let $E$ be a $\G$-representation as in the previous example and let $\cE:=W\times E$ denote the associated $\G$-equivariant vector bundle with projection
$$p:\cE\lra W.$$ Assume that $H^0(W,\cE)^\G=0$. (We call such a representation $E$ {\it concave}.
One may take a general $\G$-equivariant bundle $\cE$ on $W$, but most cases of interest arise from $\G$-representations and we restrict to those here for simplicity.)
The line bundle $p^*L_\theta$ gives a linearization of
the $\G$-action on $\cE$ and the quotient $\cE/\!\!/\G$ is the total space of the induced bundle $\underline{E}$ on $\WmodG$. 

In Gromov-Witten theory these kind of
spaces are known as {\it local targets}. Since they are not compact, a good theory of invariants requires the presence of a torus action. Specifically,
consider the torus $\bS=\CC^*$ which acts on $\cE$ by scalar multiplication in
the fibers. Let $\CC_\lambda$ be the standard representation of $\bS$.
Let $0:W\lra\cE$ be the closed embedding as the zero section.
It is $\G$-equivariant and $0^*p^*L_\theta=L_\theta$, so it induces a closed embedding of stacks
$$i:\QmapWe\hookrightarrow \mathrm{Q}^\ke_{g,k}(\cE/\!\!/\G,\beta).$$
(Since the pull-back via $p$ gives an isomorphism
$\Pic^{\G}(W)\cong\Pic^\G(\cE)$, the curve classes are the same for $\WmodG$ and $\cE/\!\!/\G$.)
The following facts are easy to check:

$(i)$ The $\bS$-fixed substack in $\mathrm{Q}^\ke_{g,k}(\cE/\!\!/\G,\beta)$ is
$\QmapWe$. In particular, it is proper.

$(ii)$ The fixed part of the obstruction theory coincides with the canonical obstruction theory
of $\QmapWe$

$(iii)$ The virtual normal bundle is $R^\bullet\pi_*\cE_{g,k,\beta}(\lambda)$, where
$$\cE_{g,k,\beta}(\lambda)=u^*(\fP\otimes_\G(\cE\otimes\CC_\lambda)),$$
with $\pi :\cC\lra\QmapWe$ the universal curve, $\fP$ the universal principal $\G$-bundle on it, and $u$ the universal section.

The $\ke$-quasimap invariants of $\cE/\!\!/\G$ are defined via the virtual localization formula and take values in the field $\QQ(\lambda)$.

\end{EG}

\begin{EG} {\bf Nakajima quiver varieties.} These form a class of non-compact GIT targets with interesting torus-equivariant quasimap theory, see Example 6.3.2 in \cite{CKM}. 
They are holomorphic symplectic varieties, a useful feature when applying to them the general results on genus zero quasimap theory we prove later in this paper, 
see Remark \ref{noncompact} $(ii)$.
\end{EG}

\section{Maps between moduli spaces and universal line bundles}\label{maps} There are several morphisms between various moduli spaces that
we will need. 

\subsection{Embeddings} The polarization $\cO(\theta)$ gives an embedding
$$\iota:\WmodG\lra \PP^N_{A^\G}
=\Proj(\mathrm{Sym}_{A^\G}((\Gamma (W, L_\theta)^\G)^{\vee}))$$
over the affine quotient $\Spec(A^\G)$.
In particular, we have a basis of $\G$-equivariant global sections $t_0,\dots,t_N\in \Gamma (W, L_\theta)^\G$
which cut out the unstable locus $W^u$ set-theoretically:
$$W^u=\{w\in W\; |\; t_i(w)=0, i=0,\dots,N\}.$$

Now $\PP^N_{A^\G}$ with the usual polarization $\cO(1)$ is itself the GIT quotient 
$$(\Spec(A^\G)\times\CC^{N+1})/\!\!/\bH,$$ with $\bH=\CC^*$ acting only on the factor
$\CC^{N+1}$ by
$$\lambda\cdot(z_0,\dots,z_N)=(\lambda z_0,\dots,\lambda z_N),\;\;\; \lambda\in\bH ,$$
and linearization $\CC_{\eta}$.
Here $\CC_{\eta}$ denotes the standard
$1$-dimensional representation of $\bH=\CC^*$, corresponding to the character $\eta= id_{\CC^*}$.
(We use the notation $\bH$ to distinguish this group from the group $\CC^*$ acting on graph spaces
considered in \S\ref{action}.)

Hence we have the graph space
$$\mathfrak{G}_{g,k,d}([\CC_{A^\G} ^{N+1}/\bH ]),$$
whose $\CC$-points we identify as usual with the data
\begin{equation}\label{map to P} ((C,x_1,\dots, x_k), \cL, (s_i)_{i=0,\dots, N},\varphi),\end{equation}
where $\cL$ is a line bundle on $C$ and $s_i\in \Gamma(C,\cL)$ are global sections. The degree $d$ of the map 
$$[(s_0,\dots,s_N)]:C\lra [\CC_{A^\G} ^{N+1}/\bH ]$$ is
in this case the unique homomorphism $$d:\Pic_{\bH}(\CC^{N+1})\cong \ZZ\lra \ZZ$$ with $d(\CC_{\eta})=\deg(\cL)$.
The map (\ref{map to P}) is a quasimap if and only if the restrictions of the sections
$s_j$ to every irreducible component of $C$ are not all identically zero. This description agrees with the earlier one
using principal $\bH$-bundles $P$ on $C$, via the identification $\cL=P\times_\bH \CC_\eta$.

Every $\CC$-point
$$((C, x_1,\dots,x_k), P, u, \varphi )$$
in $\mathfrak{G}_{g,k,\beta}([W/\G])$ induces the tuple
$$((C, x_1,\dots,x_k), u^*(P\times _\G L_{\theta}), u^*(t_i)_{i=0,...,N},\varphi)$$
which is point in $\mathfrak{G}_{g,k,d(\beta)}([\CC_{A^\G} ^{N+1}/\bH ])$.
Here the degree $d(\beta)$ is the homomorphism $$\Pic_{\bH}(\CC^{N+1})\lra\ZZ,\;\;\; \CC_{\eta}\mapsto \beta(u^*(P\times _\G L_\theta)).$$

Since the construction is natural, it will work in families as well, so we obtain a morphism of stacks
$$\iota_{\mathfrak{G}} :\mathfrak{G}_{g,k,\beta}([W/\G])\lra \mathfrak{G}_{g,k,d(\beta)}([\CC_{A^\G} ^{N+1}/\bH ]),$$
which is clearly equivariant for the action of $\CC^*$ on the graph spaces from \S\ref{action}.
It depends obviously on the choice of $\theta$ and the sections $t_0,\dots,t_N$.

It is immediate to check that $\iota_{\mathfrak{G}}$ preserves the $\ke$-stability condition for each $0+\leq\ke\leq\infty$. Hence
there are $\CC^*$-equivariant morphisms
\begin{equation}\label{epsilonqembedding}\iota_{\ke} : QG^\ke_{g,k,\beta}(\WmodG)\lra QG^\ke_{g,k,d(\beta)}(\PP^N_{A^\G}).
\end{equation}
The morphism $\iota_\infty$ is of course the usual embedding of moduli spaces
 obtained by composing a stable map with the given embedding 
 $$\iota\times id_{\PP^1}:\WmodG\times\PP^1\lra \PP^N_{A^\G}\times\PP^1.$$
 
 Finally,  note that the construction works equally for the moduli spaces without a parametrized component,
since the parametrization $\varphi: C\lra\PP^1$ was simply carried along unmodified. We obtain morphisms

\begin{equation}\label{epsilonqembedding2}\iota_\ke:\QmapWe\lra\mathrm{Q}^\ke_{g,k}(\PP^N_{A^\G},d(\beta)). \end{equation}
 
 \subsection{Contractions} We'll also need various contraction morphisms between moduli spaces with target
$$\PP^N_{A^\G}=(\Spec(A^\G)\times\CC^{N+1})/\!\!/\bH.$$

\subsubsection{Contractions to the parametrized $\PP^1$}
The first kind we consider applies to graph spaces and contracts everything to base-points on the parametrized $\PP^1$. Hence the target of all such morphisms will be 
Drinfeld's original ``space of quasimaps" 
$$QG_{0,0,d}(\PP^N_{A^\G})$$  
compactifying the Hom scheme
$\Map _d(\PP^1,\PP^N_{A^\G})$. 
If $\zeta_0,\zeta_1$ are homogeneous coordinates on $\PP^1$ as in \S\ref{fixed loci} and
$$V_{d}\cong H^0(\PP^1,\cO_{\PP^1}(d))$$ denotes the vector space of homogeneous polynomials of 
degree $d$ in $\zeta_0,\zeta_1$,
then $\Map _d(\PP^1,\PP^N_{A^\G})$ can be identified with 
$$(\mathcal{U}/\bH)\times\Spec(A^\G),$$
where 
$$\mathcal{U}\subset V_d^{\oplus(N+1)}$$ 
is the open subset of $(N+1)$-tuples of polynomials with no common zero on $\PP^1$. The Drinfeld
compactification is simply the projectivization
$$\PP(V_d^{\oplus(N+1)})\times\Spec(A^\G).$$

It is very easy to describe these morphisms point-wise. Let  
$$((C,x_1,\dots, x_k), \cL, (s_i)_{i=0,\dots, N},\varphi:C\lra\PP^1)$$
be a $k$-pointed $\ke$-stable quasimap to $\PP^N_{A^\G}$, of genus $g\geq 0$ and degree $d$, with a parametrized $\PP^1$
in the domain.

Let $C_0$ be the parametrized component of $C$ 
and let 
$y_1,\dots,y_m\in C_0$ be the points where $C_0$ meets the rest of the curve. Since $\varphi$ contracts
all other irreducible components of $C$,
$$\overline{C\setminus C_0}=C_1\coprod\dots\coprod C_m$$
is the union of $m$ connected components.
For $i=0,1,\dots, m$, let $d_i$ be the degree of the induced quasimap with domain curve $C_i$.
Since the class of any quasimap is $L_\theta$-effective, the divisor 
$$D:=\sum_{i=1}^m d_iy_i $$
on $C_0$ is effective. Consider the degree $d$ line bundle 
$$\tilde{\cL}:=(\cL |_{C_0})(D)$$ on $C_0$.
Composing the sections $s_j|_{C_0}$ with the  injection
$$0\lra\cL |_{C_0}\lra(\cL |_{C_0})(D)$$
gives $N+1$ sections $\tilde{s}_0,\dots,\tilde{s}_N$ of $\tilde{\cL}$. 
Note that all sections $\tilde{s}_j$ vanish at each $y_i$ and that the cokernel of
$$\cO_{C_0}^{\oplus(N+1)}\stackrel{(\tilde{s}_0,\dots,\tilde{s}_N)}{\lra}\tilde{\cL}$$
has length $d_i$ at $y_i$.
The data
$$(C_0,\tilde{\cL},\tilde{s}_0,\dots,\tilde{s}_N)$$
represents a point in $QG_{0,0,d}(\PP^N_{A^\G})$.

Informally, each connected component $C_i$, $i=1,\dots, N$ is replaced by a
base-point at its attaching point on $C_0$, which keeps track only of the degree of $C_i$,
then all remaining markings (if any) on $C_0$ are forgotten.

A proof that the above construction can be performed in families in a functorial
way can be 
found in \cite{LLY}. Strictly speaking, the proof is given there only for families of stable maps to $\PP^N$,
but one checks easily that the argument applies equally well for $\ke$-stable quasimaps. We conclude
that for each $0+\leq\ke \leq \infty$ there is a morphism of stacks
\begin{equation}\label{epsilonqcontraction1}
\Phi_\ke: QG^\ke_{g,k,d}(\PP^N_{A^\G})\lra QG_{0,0,d}(\PP^N_{A^\G}).
\end{equation}
These morphisms are clearly equivariant for the $\CC^*$-action on graph spaces.

\subsubsection{Contractions of rational tails}\label{rat-tails}
The second kind of contractions are morphisms 
\begin{equation}\label{contraction2}
c: \overline{M}_{g,k}(\PP^N_{A^\G},d)\lra \mathrm{Q}_{g,k}(\PP^N_{A^\G},d)
\end{equation}

\begin{equation}\label{graphcontraction2}
c: G_{g,k,d}(\PP^N_{A^\G})\lra QG_{g,k,d}(\PP^N_{A^\G})
\end{equation}
from stable maps moduli to stable
quasimaps moduli which replace the ``rational tails" of a stable map with base points, keeping track of their degrees.
They appeared already in \cite{MM} and are described point-wise also in \cite{MOP} for the case
without a parametrized component in the domain curve. 
For example, we spell out the case with a parametrized $\PP^1$.  

Let  
$$((C,x_1,\dots, x_k), \cL, (s_i)_{i=0,\dots, N},\varphi:C\lra\PP^1)$$
be a stable map with a parametrized distinguished component.
Let $T_1,..., T_l$ be the maximal connected trees of rational curves in the domain curve C satisfying the properties
\begin{enumerate}
\item $T_i$ contains no markings and the distinguished component $C_0$ is not a component in $T_i$;
\item $T_i$ meets the rest of the curve $C$ in a single point $z_i$.
\end{enumerate}
Let $d_i$ be the degree of the restriction to $T_i$ of the stable map.
Denote by $\tilde{C}$ the closure of $C\setminus(\cup_{i=1}^l T_i)$; by assumption, it contains the distinguished $C_0$ as a component. 
We then have the line bundle
$$\tilde{\cL}:=\cL|_{\tilde{C}}\otimes\cO_{\tilde{C}}(\sum_{i=1}^l d_iz_i)$$ on $\tilde{C}$,
with sections $\tilde{s}_0,\dots,\tilde{s}_N$ obtained as the compositions
$$\tilde{s}_j:\cO_{\tilde{C}}\stackrel{s_j}{\lra}\cL|_{\tilde{C}}\lra \cL|_{\tilde{C}}\otimes\cO_{\tilde{C}}(\sum_{i=1}^l d_iz_i).$$
The image of the stable map under the contraction morphism \eqref{graphcontraction2} is the stable quasimap
$$((\tilde{C},x_1,\dots, x_k), \tilde{\cL},(\tilde{s}_i)_{i=0,\dots, N},\varphi|_{\tilde{C}}).
$$
Again, it is standard to see that the constructions can be done functorially in families.

We have described the morphisms between the moduli spaces corresponding to the two extremal chambers for the stability parameter $\ke$.
In fact, these can be easily seen to factor into a composition of contractions
$$c_{\ke_1,\ke_2}:\mathrm{Q}^{\ke_1}_{g,k}(\PP^N_{A^\G},d)\lra \mathrm{Q}^{\ke_2}_{g,k}(\PP^N_{A^\G},d)$$
(and similar ones for graph spaces) for $\ke_1>\ke_2$ in adjacent chambers, see \cite{MM}, \cite{Toda}.
We will not really use these morphisms in the present paper, but they are important for comparing higher genus $\ke$-stable quasimap invariants
as $\ke$ varies, see \S\ref{higher genus} and \cite{MOP}, \cite{Toda}, \cite{CKtoric2}.

\subsubsection{Replacing markings by base-points} \label{marking to bp}
Another variant of the above constructions will be useful to us. Let  
$$((C,x_1,\dots, x_k, y_1,\dots,y_m), \cL, (s_i)_{i=0,\dots, N})$$
be a $k+m$-pointed $\ke$-stable quasimap to $\PP^N_{A^\G}$, of genus $g\geq 0$ and degree $d$. Further, fix positive integers $d_1,\dots d_m$. The data
\begin{equation}\label{prest}((C,x_1,\dots, x_k), \cL\otimes\cO_C(\sum_{j=1}^m d_jy_j), (\tilde{s}_i)_{i=0,\dots, N})
\end{equation}
is a $k$-pointed prestable quasimap to $\PP^N_{A^\G}$, where the sections $\tilde{s}_i$ are again obtained by composing $s_i$ with the injection
$$\cL\lra\cL\otimes\cO_C(\sum_{j=1}^m d_jy_j).
$$
 For each $j$ the quasimap has a base-point of length $d_j$ at $y_j$. It is possible that some of the rational components of $C$ are now rational tails, i.e., they contain
 a single node and none of the markings $x_i$. Each such rational tail is part of a unique maximal tree as in the previous subsection. Let $T_1,..., T_l$ be the set of all such trees
 and let $e_1,\dots e_l$ be their total degrees (that is, $e_j$ is the degree of the restriction to the subcurve $T_j$ of the quasimap \eqref{prest}). We now contract these trees to
 base-point of length $e_j$, just as in \S\ref{rat-tails} to obtain a $k$-pointed stable (in the sense of $\ke=0+$-stability) quasimap to $\PP^N_{A^\G}$. The upshot is that we obtain
 for each $0+\leq\ke\leq \infty$ a morphism
 \begin{equation} b_{\ke,\{d_j\}}:\mathrm{Q}^{\ke}_{g,k+m}(\PP^N_{A^\G},d)\lra\mathrm{Q}_{g,k}(\PP^N_{A^\G},d+\sum d_j)
 \end{equation}
 and similarly for graph spaces,
 \begin{equation}b_{\ke,\{d_j\}}:QG^\ke_{g,k+m,d}(\PP^N_{A^\G})\lra QG_{g,k,d+\sum d_j}(\PP^N_{A^\G}).
 \end{equation}

\subsection{Universal line bundles}\label{universal} For each $0+\leq\ke\leq \infty$ consider the composed $\CC^*$-equivariant morphisms
$$\Phi_\ke\circ\iota_\ke : QG^{\ke}_{g,k,\beta}(\WmodG) \lra QG_{0,0,d(\beta)}(\PP^N_{A^\G}).$$

Recall that we have the identification
$$QG_{0,0,d(\beta)}(\PP^N_{A^\G})=\PP(V_{d(\beta)}^{\oplus(N+1)})\times\Spec(A^\G),$$
where $V_{d(\beta)}\cong H^0(\PP^1,\cO_{\PP^1}(d(\beta)))$ is the vector space of homogeneous polynomials of 
degree $d(\beta)$ in two variables $\zeta_0,\zeta_1$. Furthermore, under this identification, the action of $\CC^*$ 
on $V_{d(\beta)}$ is given by
multiplication on the first variable $\zeta_0$ only. 
It has the canonical lifting to an action on the
(relative) tautological line bundle $\cO(1)$ on $\PP(V_{d(\beta)}^{\oplus(N+1)})\times\Spec(A^\G)$. 
In this paper, we always consider this canonical $\CC^*$-linearization of $\cO(1)$.

Following Givental, we define the {\it universal $\CC^*$-equivariant line bundle} $U(L_\theta)$ on $QG^{\ke}_{g,k,\beta}(\WmodG)$ by
\begin{equation}\label{universal1}U(L_\theta):=(\Phi_\ke\circ\iota_\ke)^*\cO(1).\end{equation}

Similarly, given effective classes $\beta_1,\dots \beta_m$ we have a universal line bundle on $QG^\ke_{g,k+m,\beta}(\WmodG) $, still denoted by $U(L_\theta)$ 
and defined as the
pull-back of $\cO(1)$ via 
$$\Phi_{\ke}\circ b_{\ke, \{d(\beta_j)\}}\circ\iota_{\ke} : QG^\ke_{g,k+m,\beta}(\WmodG) \lra QG_{0,0,d(\beta)+\sum d(\beta_j)}(\PP^N_{A^\G}).$$

As ordinary line bundles, these $U(L_\theta)$ on graph spaces can in fact be defined without the use of the contraction morphisms, 
via the universal principal $\G$-bundle and the representation $\CC_\theta$; however it will be important in our arguments that they
have the {\it canonical} $\CC^*$-linearization induced from the linearization of $\cO(1)$ described above.

\section{$\CC^*$-localization on graph spaces}
One of the most important feature of the graph spaces is that
the natural $\CC^*$-action on them has fixed point loci described via moduli spaces of {\it unparametrized}
quasimaps, with easy to compute virtual normal bundles. Via the virtual localization theorem of \cite{GP}, this description leads to 
useful factorization properties of generating functions for the evaluations of certain cohomology classes on graph spaces against
the virtual class. We recall
in this section the description of the fixed point loci and of the equivariant Euler classes of their virtual normal bundles. 

\subsection{Fixed loci of $\CC^*$-action on graph spaces}\label{fixed loci} 

Fix $g,k\geq 0$, and an $L_\theta$-effective  class $\beta$. For $\ke \in [0+,\infty]$ we will consider moduli spaces
$$\Qglonge  $$
and think of $\bullet$ as a distinguished (last) marking. Note that for $\beta=0$ and $2g+k\geq 2$
we have
$$\Qglonge\cong \overline{M}_{g,k+\bullet}\times\WmodG $$
and all $k+1$ evaluation maps at markings are the projection onto the second factor.

If an $\ke$-stable parametrized quasimap
$$((C, x_1,\dots,x_k), P, u, \varphi )$$
is $\CC^*$-fixed, then all markings, all nodes, all base-points, and the entire
degree $\beta$ must be supported over the fixed points $0$ and $\infty$ on the parametrized component $C_0$.
We have
$$(\QGge)^{\CC^*}=\coprod {F}_{g_2,k_2,\beta_2}^{g_1,k_1,\beta_1},$$
the union over all possible splittings
$$g=g_1+g_2,\;\; k=k_1+k_2,\;\; \beta=\beta_1+\beta_2,$$
with $g_i,k_i\geq 0$ and $\beta_i$ effective. 

The component ${F}_{g_2,k_2,\beta_2}^{g_1,k_1,\beta_1}$ 
is identified with the fiber product
\begin{equation}\label{simple loci}
 \Qgonelonge\times_{\WmodG}\Qgtwolonge 
 \end{equation}
 over the evaluation maps $\eb$ at the special markings $\bullet$.
Under this identification, the inclusion
$$i: F_{g_2,k_2,\beta_2}^{g_1,k_1,\beta_1}\lra \QGge$$ is obtained as
follows: given two $\ke$-stable quasimaps to $\WmodG$ sending the special markings $\bullet$ to the same point $p$, we glue them at $0$ and $\infty$ to the
constant map $\PP^1\lra p\in\WmodG$.

The unstable cases $(g_2,k_2,\beta_2)=(0,1,0)$ or $(g_2,k_2,\beta_2)=(0,0,0)$ (and likewise for $(g_1,k_1,\beta_1)$) are included
above by the following conventions:
$$\mathrm{Q}^\ke_{0,0+\bullet}(\WmodG,0):=\WmodG,\;\;\mathrm{Q}^\ke_{0,1+\bullet}(\WmodG,0):=\WmodG,\;\;\; \eb :=id_{\WmodG}.$$
This is because 
$$QG_{0,1,0}(\WmodG)\cong\WmodG\times\PP^1$$
with fixed point loci $\WmodG\times\{0\}$ and $\WmodG\times\{\infty\}$, while
$$QG_{0,0,0}(\WmodG)\cong\WmodG$$ with trivial $\CC^*$-action.

For stable maps ($\ke=\infty$) the above description covers all possibilities. However, for quasimaps there are more unstable
cases which need to be treated separately, namely $(g_2,k_2,\beta_2)=(0,0,\beta_2)$ with $\beta_2\neq 0$ and $\ke\leq\frac{1}{\beta_2(L_\theta)}$
(and the symmetric ones $(0,0,\beta_1\neq 0)$). This is because in genus zero moduli of stable quasimaps in the chamber $\ke=0+$ require 
at least two markings.
For $\beta\neq 0$, we denote by
$$\mathrm{Q}_{0,0+\bullet}(\WmodG,\beta)_\infty$$
the moduli space parametrizing the quasimaps of class $\beta$
$$(\PP^1,P,u)
$$
with $P$ a principal $\G$-bundle on $\PP^1$, $u:\PP^1\lra P\times_\G W$ a section such that $u(x)\in W^s$ for $x\neq\infty\in \PP^1$ and 
$\infty\in \PP^1$ is a base-point of length $\beta(L_\theta)$. Similarly, we have the moduli space
$$\mathrm{Q}_{0,0+\bullet}(\WmodG,\beta)_0$$
for which $u$ has now a base-point of length $\beta(L_\theta)$ at $0\in \PP^1$ and no other base points. (Note that the two moduli spaces are isomorphic, but
they sit in $QG_{0,0,\beta}(\WmodG)$ as different components of the $\CC^*$-fixed locus, with different normal bundles.)

In either case the regular map 
$$u_{reg}:\PP^1\lra\WmodG$$
induced by the quasimap is a constant map. We define the evaluation map at the special marking by
$$ev_\bullet ((\PP^1,P,u))=u_{reg}(\PP^1).$$
With these definitions we have 
$${F}_{0,0,\beta_2}^{g,k,\beta_1}\cong\mathrm{Q}^\ke_{g,k+\bullet}(\WmodG,\beta_1)\times_{\WmodG}\mathrm{Q}_{0,0+\bullet}(\WmodG,\beta_2)_\infty
$$
for $k\geq 1$ and $\ke\leq\frac{1}{\beta_2(L_\theta)}$ and similarly
$${F}_{g,k,\beta_2}^{0,0,\beta_1}\cong\mathrm{Q}_{0,0+\bullet}(\WmodG,\beta_1)_0\times_{\WmodG}\mathrm{Q}^\ke_{g,k+\bullet}(\WmodG,\beta_2)
$$
for $k\geq 1$ and  $\ke\leq\frac{1}{\beta_1(L_\theta)}$. When $g=k=0$ and $\ke\leq\mathrm{min}\{\frac{1}{\beta_1(L_\theta)},\frac{1}{\beta_2(L_\theta)}\}$,
$${F}_{0,0,\beta_2}^{0,0,\beta_1}\cong\mathrm{Q}_{0,0+\bullet}(\WmodG,\beta_1)_0\times_{\WmodG}\mathrm{Q}_{0,0+\bullet}(\WmodG,\beta_2)_\infty .
$$

\subsection{Euler classes of virtual normal bundles} For the rest of the section we restrict to genus zero. 
We drop the genus from the notation and write $F_{k_2,\beta_2}^{k_1,\beta_1}$ for the components of the fixed point locus in $\QGraphe$. 
The Euler classes of their virtual normal bundles are essentially the product of contributions from $(k_1,\beta_1)$ and $(k_2,\beta_2)$. 
The notation $H^*(X)$ will mean cohomology with $\QQ$-coefficients throughout the rest of the paper.

We begin with the loci $F_0:=F_{0,0}^{k,\beta}$ where everything is concentrated over $0\in\PP^1$. In this case only $(k,\beta)$ contributes to the product.
As explained in the previous subsection, there are two cases to consider. 

Assume first that either $k\geq 1$, or if $k=0$,
that $\ke>\frac{1}{\beta(L_\theta)}$ (note that
this is {\it always} true in for stable maps). Hence $F_0\cong\Qlonge$.
If $N_{F_0}$ is the {\it virtual} normal bundle, given by the moving part of the (absolute) obstruction theory, then its $\CC^*$-equivariant Euler class
is
\begin{equation}\label{cont k>0}
\mathrm{e}_{\CC^*}(N_{F_0})=\mathrm{cont}_{(k,\beta)}(z):=\begin{cases}
1, & (k,\beta)=(0,0),\\
z, & (k,\beta)=(1,0),\\
z(z-\psi_\bullet) , & \text{otherwise},
\end{cases}\end{equation}
where $z$ denotes the generator of $A^*_{\CC^*}(\Spec(\CC))$. 
The formula \eqref{cont k>0} is well-known, see e.g., \cite{Givental} for stable maps
and \cite {CK}, \cite{CKM} for quasimaps. Note that it is the same for all targets $\WmodG$.

In addition, we observe that in these cases the $\CC^*$-fixed part of the obstruction theory for the graph space gives precisely the obstruction theory of $\Qlonge$.

On the other hand, if $k=0$ and $\ke\leq\frac{1}{\beta(L_\theta)}$, then $$F_0\cong\mathrm{Q}_{0,0+\bullet}(\WmodG,\beta)_0.$$
In this case
\begin{equation}\label{cont k0}\mathrm{e}_{\CC^*}(N_{F_0})=\mathrm{cont}_{(0,\beta)}(z)\in H^*(F_0,\QQ)\otimes \QQ(z)\end{equation}
will depend on the target and needs to be calculated separately. In fact, as we will see in the next section, what one is interested in calculating is the push-forward
to $H^*(\WmodG,\QQ)\otimes\QQ(z)$ of $[F_0]^{\mathrm{vir}}/ \mathrm{cont}_{(0,\beta)}(z)$ via the evaluation map $\eb$ (these will essentially be the coefficients of the
so-called small $I$-function of $\WmodG$). For many of the examples of targets listed in \S\ref{Examples} closed formulas for these push-forwards are known, see
Remark \ref{small I functions} below.

For a general component $F_{k_2,\beta_2}^{k_1,\beta_1}$, we have
\begin{equation}\label{contproduct}\mathrm{e}_{\CC^*}(N_F)=\mathrm{cont}_{(k_1,\beta_1)}(z)\boxtimes\mathrm{cont}_{(k_2,\beta_2)}(-z),\end{equation}
with each factor as in \eqref{cont k>0} or \eqref{cont k0}.

\section{Generating functions for genus zero quasimap invariants}\label{J and S} In this section we require that quasimap invariants are well-defined.
Hence we will assume either that $\WmodG$ is projective,
or that $\WmodG$ has 
an additional action by a torus ${\T}$, with proper fixed loci in the quasimap moduli spaces, as in \S6.3 of \cite{CKM}.
\subsection{The $J^\ke$ and $I$-functions}  For varying $k$ and $\beta$, we assemble into generating functions
the residues of the virtual classes of 
$\QGraphe$
at the fixed loci 
$F_0={F}_{0,0}^{k,\beta}$, 
paired against cohomology classes pulled back from $\WmodG$ via
the evaluation maps at the $k$ markings, then push-forward to $H_*(\WmodG)$ using the evaluation $\eb$ at the special markings.

Introduce the Novikov ring 
$$\Lambda=\Lambda((W,\G,\theta)):=\{ \sum_{\beta\in \mathrm{Eff}(W,\G,\theta)} a_\beta q^\beta | a_\beta\in \QQ\},$$
the $q$-adic completion of the semigroup ring $\QQ[\mathrm {Eff}(W,\G,\theta)]$.  In the quasi-projective case we extend the coefficients
to the field $$K:=\QQ(\{\lambda_j\})=H^*_{{\T},\mathrm{loc}}(\Spec(\CC),\QQ),$$
the {\it localized} $\T$-equivariant cohomology of a point.
Sometimes we will also use the notation $\QQ[[q]]$ (or $K[[q]]$) for the Novikov ring.

The generating functions will be formal functions of
${\bf t}\in H^*(\WmodG)$. (In the case of quasi-projective targets we take ${\bf t}$ in the ${\T}$-equivariant cohomology of $\WmodG$.
To simplify notation, in this section we will only write $H^*(\WmodG)$ in either instance; the torus equivariant case is discussed
in more detail in section \ref{equivariant case}.)

\begin{Def} Fix $\ke \in \QQ_{>0}$. The (big) $J^\ke$-function of $\WmodG$ is
\begin{align}\label{Je}
J^\ke({\bf t}, z)&=&\sum_{k\geq 0,\beta\geq 0}q^\beta(\eb)_*\left(\frac{\prod_{i=1}^k ev_i^*({\bf t})}{k!}\cap\mathrm{Res}_{F_0}[\QGraphe]^{\mathrm{vir}}\right)\\
\nonumber&=& \one+ \frac{{\bf t}}{z} + \sum _{k=0, \beta \neq 0, \beta(L_\theta)\leq 1/\ke}
q^\beta (\eb)_*\frac{[\mathrm{Q}_{0,0+\bullet}(\WmodG,\beta)_0]^{\mathrm{vir}}}{\mathrm{e}_{\CC^*}(N_{F_0})}\\
\nonumber&+& \sum_{\stackrel{k\geq 1\; {\mathrm{or}}\; \beta(L_\theta)> 1/\ke ,} {(k,\beta)\neq (1,0)}}q^\beta (\eb)_*\frac{\prod_{i=1}^k ev_i^*({\bf t})\cap[\Qlonge]^{\mathrm{vir}}}{z(z-\psi_\bullet)}.
\end{align}
\end{Def}
Here $\one=[\WmodG]\in H^0(\WmodG)$ is the unit in cohomology. The properness of the quasimap moduli spaces over the affine quotient is easily seen to imply that the evaluation maps $\eb$ are proper, hence the push-forwards are 
well-defined for all targets, projective or not.

For any $\infty\ge\ke >1$ the above definition gives the usual big $J$-function in Gromov-Witten theory:
\begin{align}\label{J>1}J({\bf t}, z)&=J^{\infty}({\bf t}, z)=\one+ \frac{{\bf t}}{z} \\
\nonumber &+ \sum _{(k,\beta)\neq(0,0),(1,0)}q^\beta (\eb)_*\left(\frac{\prod_{i=1}^k ev_i^*({\bf t})}{k!}\cap\frac{[\Mlong]^{\mathrm{vir}}}{z(z-\psi_\bullet)}\right).
\end{align}

On the other hand we may also consider the $(\ke=0+)$-stability condition to obtain a similar generating function, called {\it the big $I$-function of} $\WmodG$ and introduced
in \cite{CK},\cite{CKM}:

\begin{align}\label{J0+} I({\bf t}, z)&=J^{0+}({\bf t}, z)=\one+ \frac{{\bf t}}{z} + \sum_{k=0, \beta \neq 0}
q^\beta (\eb)_*\frac{[\mathrm{Q}_{0,0+\bullet}(\WmodG,\beta)_0]^{\mathrm{vir}}}{\mathrm{e}_{\CC^*}(N_{F_0})}\\
\nonumber  &+\sum_{k\geq 1,(k,\beta)\neq(1,0)}q^\beta (\eb)_*\left(\frac{\prod_{i=1}^k ev_i^*({\bf t})}{k!}\cap\frac{[\Qlong]^{\mathrm{vir}}}{z(z-\psi_\bullet)}\right).
 \end{align}
 
For each $\ke\geq 0+$, the function $J^\ke$ takes values in 
$$H^*(\WmodG,\Lambda)\{\!\{z,z^{-1}\}\!\},$$ 
where we have used Poincar\' e duality to pass to cohomology. (For quasi-projective targets the coefficient ring is replaced with the localized $\T$-equivariant cohomology 
$H^*_{{\T},\mathrm{loc}}(\WmodG,\Lambda)$. ) By definition, $H^*(\WmodG,\Lambda)\{\!\{z,z^{-1}\}\!\}$ is a certain completion of the ring of formal Laurent series in $1/z$. Precisely,
\begin{equation}\label{completion}
\{ \sum_{j=-\infty}^{\infty} h_jz^j \; |\; h_j\lra 0\; {\text {as}}\; j\ra\infty\; {\text{ in the}}\; q{\text {-adic topology on}}\; \Lambda \}.
\end{equation}
In fact this completion is needed only for the $I$-function $I=J^{0+}$, as positive powers of $z$ can possibly appear only in the terms
with $k=0, \beta \neq 0$, and $\beta(L_\theta)\leq 1/\ke$.

The first two asymptotic properties below 
are easily obtained from the definitions; 
the third follows from a dimension counting argument (in fact, we will see a more general statement in \S\ref{semi-positive} below).
\begin{enumerate}
\item $J^\ke=e^{{\bf t}/ z}+O(q)$ for all $\ke\geq 0+$.
\item $J^{\infty}=\one+\frac{{\bf t}}{z}+O(\frac{1}{z^2})$.
\item Assume that $\WmodG$ is  Fano of index at least $2$, i.e., that for every $\beta\neq 0$ we have $\beta (\det T_W)\geq 2$. Then
$J^{\ke}=\one+\frac{{\bf t}}{z}+O(\frac{1}{z^2})$ for all $\ke\geq 0+$.
\end{enumerate}

In the rest of this section we derive other properties of these generating functions from the geometry of the $\CC^*$-action on graph spaces.

\subsection{Polynomiality}\label{poly} 
The following construction in Gromov-Witten theory is due to Givental; we just observe here that it applies to quasimap theory as well, 
for all $\ke\geq0+$.

Consider the universal line bundle $U(L_\theta)$ on $\QGraphe$ defined in \eqref{universal1}.
Using the identification of each $F_{k_2,\beta_2}^{k_1,\beta_1}$ with a fibered product we immediately see that
\begin{equation}\label{Urestriction} U(L_\theta)|_{F_{k_2,\beta_2}^{k_1,\beta_1}}
= ev ^*_{\bullet} (\cO(\theta) ) \boxtimes \CC _{\beta_2 (\theta)}.\end{equation}
Define a generating function for primary quasimap integrals on graph spaces by
\begin{equation}\label{graph potential}
D^\ke:=\sum _{k,\beta\geq 0}q^\beta\int_{[\QGraphe]^{\mathrm{vir}}}\frac{e^{c_1^{\CC^*}(U(L_\theta))y}\prod_{i=1}^k ev_i^*({\bf t})}{k!}.
\end{equation}
Here $y$ is a formal variable, $c_1^{\CC^*}$ is the equivariant first Chern class, and the symbol $\int$ denotes $\CC^*$-equivariant push-forward to a point. 

Fix once and for all a homogeneous basis
$\{\gamma_i\}_i$ of $H^*(\WmodG)$ and denote by $\{\gamma^i\}_i$ the dual basis with respect to the intersection pairing on $\WmodG$,
$$\int_{\WmodG}\gamma_i\gamma^j=\delta_i^j.
$$
(As usual, in the quasi-projective $\T$-equivariant case the integral is understood via the localization theorem as an integral over the $\T$-fixed locus in $\WmodG$, which is
proper.)

If we write ${\bf t}=\sum t_i\gamma_i$, then $D^\ke$ is an element in the ring of formal power series 
$\Lambda[[t_i, y, z]]$. Moreover, for each $\beta$ the coefficient of $q^\beta$ in the right-hand side is a polynomial in $z$.

We now apply the virtual localization theorem \cite{GP} to $D^\ke$. Using the product expression \eqref{contproduct} for the virtual normal bundles and the restriction
formula \eqref{Urestriction} we obtain the following factorization.

\begin{Prop}\label{poly lemma}Set
$(qe^{-zyL_\theta})^\beta:=q^\beta e^{-zy\beta(L_\theta)}$. Then
$$D^\ke=\int _{\WmodG} e^{c_1(\cO(\theta)) y} J^\ke(q, {\bf t}, z) J^\ke(qe^{-zyL_\theta}, {\bf t}, -z).$$
 In particular, for each $\beta$ the coefficient of $q^\beta$ in the right-hand side is a polynomial in $z$.
\end{Prop}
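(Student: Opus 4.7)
The plan is to apply the virtual $\CC^*$-localization theorem of \cite{GP} to each graph-space integral appearing in \eqref{graph potential} and recognize the resulting sum as the asserted product of $J^\ke$-functions. The fixed locus decomposition $(\QGraphe)^{\CC^*} = \coprod F_{k_2, \beta_2}^{k_1, \beta_1}$ of Section 4.1 (including the special unstable loci with an unpointed base-point component) will produce the two factors, with data $(k_1, \beta_1)$ concentrated over $0\in \PP^1$ and data $(k_2, \beta_2)$ concentrated over $\infty\in \PP^1$; the normal bundle factorization \eqref{contproduct} then naturally produces one factor evaluated at $z$ and one at $-z$.

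More concretely, I would first distribute the $k$ markings in the integrand $\frac{1}{k!}\prod_{i=1}^k ev_i^*({\bf t})$ between the two sides via the identity $\frac{1}{k!} = \sum_{k_1+k_2=k} \frac{1}{k_1! k_2!}$, and use the restriction formula \eqref{Urestriction} to obtain
$$ e^{c_1^{\CC^*}(U(L_\theta)) y}\big|_{F^{k_1,\beta_1}_{k_2,\beta_2}} = (\eb)^*\!\bigl(e^{c_1(\cO(\theta)) y}\bigr)\cdot e^{-z y\, \beta_2(L_\theta)}, $$
with the sign convention determined by the choice of $\CC^*$-linearization. The exponential $e^{c_1(\cO(\theta))y}$ is then pulled back from $\WmodG$ along both sides of the fiber product $F^{k_1,\beta_1}_{k_2,\beta_2} = \Qonelonge \times_{\WmodG} \Qtwolonge$, so it contributes a single factor in the final integral over $\WmodG$.

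Next, I would convert each localization summand into a pairing on $\WmodG$ via the fibered-product identity
$$\int_{[Q_1 \times_{\WmodG} Q_2]^{\mathrm{vir}}} p_1^*\alpha_1 \cdot p_2^*\alpha_2 \cdot (\eb)^*\eta = \int_{\WmodG} \eta \cdot (\eb^1)_*\alpha_1 \cdot (\eb^2)_*\alpha_2,$$
and compare the resulting sums term by term with the definition \eqref{Je} of $J^\ke$. The sum over $(k_1, \beta_1)$ weighted by $q^{\beta_1}/\mathrm{cont}_{(k_1,\beta_1)}(z)$ reproduces $J^\ke(q, {\bf t}, z)$, while the sum over $(k_2, \beta_2)$ weighted by $q^{\beta_2}/\mathrm{cont}_{(k_2,\beta_2)}(-z)$, with the factor $e^{-zy\beta_2(L_\theta)}$ absorbed into $q^{\beta_2}$ to give $(qe^{-zyL_\theta})^{\beta_2}$, reproduces $J^\ke(qe^{-zyL_\theta}, {\bf t}, -z)$.

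The polynomiality claim is immediate from the left-hand side: for each $\beta$ and each $n$, the coefficient of $q^\beta y^n$ in $D^\ke$ is the $\CC^*$-equivariant integral against $[\QGraphe]^{\mathrm{vir}}$ of a class of bounded equivariant degree, hence an element of $\QQ[z]$ (or $K[z]$ in the equivariant, quasi-projective case). The main obstacle in executing this plan is the bookkeeping for the unstable fixed loci with $(k_i, \beta_i) = (0, \beta_i\neq 0)$ in the chamber $\ke \leq 1/\beta_i(L_\theta)$: one must check carefully that the contributions of $\mathrm{Q}_{0,0+\bullet}(\WmodG,\beta)_0$ and $\mathrm{Q}_{0,0+\bullet}(\WmodG,\beta)_\infty$ to the localization sum match exactly the extra terms in \eqref{Je} responsible for the positive powers of $z$. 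Once this match is set up using the conventions of Section 4.1, the remainder is a routine recollection of terms.
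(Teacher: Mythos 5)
Your proposal follows essentially the same route as the paper, whose proof consists of a single sentence indicating that the factorization follows by applying the virtual localization theorem, using the normal-bundle product \eqref{contproduct} and the restriction formula \eqref{Urestriction}; your elaboration of how the fixed-locus sum reassembles into the two $J^\ke$ factors, with the shifted Novikov variable coming from the $\CC^*$-weight of $U(L_\theta)$ over the $\infty$-side, is exactly the intended argument. One small slip in the bookkeeping: the identity you invoke, $\frac{1}{k!}=\sum_{k_1+k_2=k}\frac{1}{k_1!k_2!}$, is false as stated (the right-hand side equals $2^k/k!$); what you actually need is that each unordered splitting $(k_1,k_2)$ of markings is realized by $\binom{k}{k_1}$ labeled fixed components of the symmetric integrand, and $\binom{k}{k_1}\cdot\frac{1}{k!}=\frac{1}{k_1!\,k_2!}$, which gives the correct product of the $1/k_i!$ prefactors appearing in each $J^\ke$ factor.
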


\subsection {The $S$-operator}

We extend the customary Gromov-Witten theory bracket notation to $\ke$-quasimap invariants: 
for nonnegative integers $a_j$ and cohomology classes $\sigma_j\in H^*(\WmodG)$ 
put
$$\langle \sigma_1\psi_1^{a_1},\dots,\sigma_k\psi_k^{a_k}
\rangle^\ke_{g,k,\beta}:=\int_{[\QmapWe]^{\mathrm{vir}}}\prod_j  ev_j^*(\sigma_j)\psi_j^{a_j}.$$
Extending by linearity, the cohomological insertions $\sigma_j$ may be taken from $H^*(\WmodG,\Lambda)\{\!\{z,z^{-1}\}\!\}$.

Similarly, we can consider invariants on the graph spaces

$$\langle \sigma_1,\dots,\sigma_k
\rangle^{QG^\ke}_{k,\beta}:=\int_{[\QGraphe]^{\mathrm{vir}}}\prod_j  \tilde{ev}_j^*(\sigma_j).$$

This time the evaluation maps $\tilde{ev}_j$ take values in $\WmodG\times \PP^1$ and the cohomology classes 
$$\sigma=\gamma\ot p\in H^*(\WmodG)\ot H^*_{\CC^*}(\PP^1)\cong H^*_{\CC^*}(\WmodG\times\PP^1)$$
are pulled-back from there. 
Again, by linearity, we may use insertions with $\gamma\in H^*(\WmodG,\Lambda)$.
As in the previous subsection, the virtual class is the $\CC^*$-equivariant one and the integral is the equivariant push-forward to a point.
We will write $\gamma p$ for $\gamma\ot p$ and simply $\gamma$ for $\gamma\ot 1$.

Further, we use double brackets to denote generating series of invariants:
$$\lla \sigma_1\psi_1^{a_1},\dots,\sigma_k\psi_k^{a_k} \rra^\ke_{0,k}:=\sum_{m\geq 0,\beta\geq 0} \frac{q^\beta}{m!}
\langle \sigma_1\psi_1^{a_1},\dots,\sigma_k\psi_k^{a_k},
{\bf t},\dots ,{\bf t}\rangle^\ke_{0,k+m,\beta}
$$

$$\lla \sigma_1,\dots,\sigma_k\rra^{QG^\ke}_k:=\sum_{m\geq 0,\beta\geq 0} \frac{q^\beta }{m!}\langle \sigma_1,\dots,\sigma_k, {\bf t},\dots ,{\bf t}
\rangle^{QG^\ke}_{k+m,\beta},
$$
where as before ${\bf t}=\sum_i t_i\gamma_i\in H^*(\WmodG)$. 

 Note that all terms in the right-hand side of the second formula are well-defined, even when $k=0$, while 
in the first formula we must restrict the summation to the {\it stable} cases (or introduce by hand terms that correspond to triples $(k+m,\beta,\ke)$ which are unstable).
For example, we can write the $J$-functions (using the projection formula) as
\begin{equation}\label{bracketJ}
J^\ke=\sum_i\gamma_i\lla \frac{\gamma^i}{z(z-\psi)}\rra^\ke_{0,\bullet}
\end{equation}
with the understanding that the unstable terms are defined as in \eqref{Je}, by residues on graph spaces:
$$\begin{cases}1& m=0,\beta=0\\
{{\bf t}}/{z}& m=1, \beta=0\\
\int_{[F_0]^{\mathrm{vir}}}\frac {\eb^*\gamma^i}{{\mathrm{e}_{\CC^*}(N_{F_0})}}& m=0,\beta\neq 0, \beta(L_\theta)\leq 1/\ke
\end{cases}
$$

The following operator on $H^*(\WmodG,\Lambda)\{\!\{z,z^{-1}\}\!\}$, denoted $S^\ke(z)$ and defined by
\begin{align}
S^\ke(z)(\gamma)&=\sum_{m\ge 0,\beta\ge 0}\frac{q^\beta}{m!} 
(ev_1)_*\left(\frac{[\mathrm{Q}^\ke_{0,2+m}(\WmodG,\beta)]^{\mathrm{vir}}}{z-\psi}ev_2^*(\gamma)\prod_{j=3}^{2+m}ev_j^*({\bf t})\right)\\
\nonumber &=\sum_i\gamma_i \lla \frac{\gamma^i}{z-\psi},\gamma\rra^\ke_{0,2}.
\end{align}
will be very important for the rest of the paper. 
Here the unstable term in $\lla ... \rra^\ke_{0,2}$ corresponding to $m=0,\beta=0$ is defined to be $\int_{\WmodG}\gamma^i\gamma$.
In fact, we have a family of linear operators depending on the parameter ${\bf t}\in H^*(\WmodG)$; we use the notation $S^\ke_{\bf t}(z)$
if the ${\bf t}$-dependence needs to be emphasized.

By the well-known calculation of $\psi$ integrals on $\overline{M}_{0,m+2}$ we have the asymptotic expansion
$$S^\ke(z)(\gamma)=e^{\frac{\bf t}{z}}\gamma+O(q).
$$
When $\infty\geq\ke >1$, this operator is a familiar object in Gromov-Witten theory: its components
$$
S^{\infty}_{ij}(z)=\lla \frac{\gamma^i}{z-\psi},\gamma_j\rra^{\infty}_{0,2}
$$
form a fundamental solution to the quantum differential equation, see \cite{Givental-equiv}. It is well-known that $S^{\infty}(z)$
is a unitary operator.
We prove here that this property holds for all $\ke\geq 0+$.

\begin{Prop}\label{Unitary}
Consider the operator
\begin{equation*}
(S^\ke)^\star(-z)(\gamma)=\sum_i\gamma^i\lla \gamma_i,\frac{\gamma}{-z-\psi}\rra^\ke_{0,2}
\end{equation*}
Then
\begin{equation}\label{unitary}
(S^\ke)^\star(-z)\circ S^\ke(z)=Id.
\end{equation}

\end{Prop}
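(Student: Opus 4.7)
The strategy is to adapt the classical proof of unitarity of the $S$-matrix in Gromov--Witten theory, which uses only the CohFT splitting axiom at nodes (and the genus-zero $\psi$-class calculus). Since \cite{CKM} establishes that the $\ke$-quasimap invariants of $\WmodG$ form a CohFT for every $\ke\geq 0+$, with the usual gluing axiom at nodes, the argument transfers uniformly across all stability chambers.

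Expanding both operators yields
$$(S^\ke)^\star(-z)\circ S^\ke(z)(\gamma)=\sum_{i,j}\gamma^j\,\lla\gamma_j,\tfrac{\gamma_i}{-z-\psi}\rra^\ke_{0,2}\,\lla\tfrac{\gamma^i}{z-\psi},\gamma\rra^\ke_{0,2}.$$
The inner sum $\sum_i\gamma_i\otimes\gamma^i$ represents the diagonal class of $\WmodG\times\WmodG$, so the CohFT splitting axiom applied to the gluing of two quasimap moduli along a node rewrites this product as a single correlator on a higher-pointed quasimap moduli space, with a boundary-divisor insertion $[D]$ and descendant insertions $\tfrac{1}{-z-\psi_{n_1}}$ and $\tfrac{1}{z-\psi_{n_2}}$ at the two sides of the node. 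Applying the partial-fraction identity
$$\frac{1}{(-z-\psi_{n_1})(z-\psi_{n_2})}=-\frac{1}{\psi_{n_1}+\psi_{n_2}}\!\left(\frac{1}{-z-\psi_{n_1}}+\frac{1}{z-\psi_{n_2}}\right),$$
combined with the normal-bundle relation $[D]\cdot(\psi_{n_1}+\psi_{n_2})=-[D]^2$ and the genus-zero topological recursion for $\psi$-classes across boundary strata, the $z$-dependent contributions telescope. The only surviving term is the unstable seed $(m,\beta)=(0,0)$ of the inner bracket, defined in the paper to equal $\int_{\WmodG}\gamma^i\gamma$; summing against $\gamma^j$ reconstructs $\gamma$, as required.

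A conceptually cleaner variant would be to recognize $S^\ke_{\bf t}(z)$ as a fundamental solution to the quantum differential equation $z\,\partial_a S^\ke=(\gamma_a\star_\ke)\,S^\ke$, where $\star_\ke$ is the $\ke$-quantum product defined via $\lla\cdot,\cdot,\cdot\rra^\ke_{0,3}$. The WDVV equation (associativity of $\star_\ke$, itself a consequence of the gluing axiom), together with the symmetry of the 3-point bracket and the initial condition $S^\ke(z)=e^{\mathbf{t}/z}+O(q)$, immediately forces $(S^\ke)^\star(-z)\,S^\ke(z)=\mathrm{Id}$ by the standard formal argument of \cite{Givental-equiv}.

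The main obstacle is bookkeeping rather than conceptual: one must ensure that the gluing axiom of \cite[\S 7]{CKM} compatibly handles the descendant insertions $\tfrac{1}{z-\psi}$ and the unstable seed terms that have been introduced by hand in the double-bracket notation, and that the node $\psi$-class comparisons hold uniformly for all $\ke\geq 0+$ (in particular across boundary strata where base points may appear). Once these compatibilities are recorded, the proof reduces verbatim to the Gromov--Witten case $\ke=\infty$.
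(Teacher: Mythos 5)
Your proposal takes a genuinely different route from the paper, but it has a real gap that the paper itself flags as the reason it does \emph{not} use this approach. Both of your variants—the CohFT splitting argument with ``topological recursion for $\psi$-classes'' and the cleaner QDE/WDVV argument—ultimately lean on the string equation (or the closely related genus-zero topological recursion relation, whose derivation requires the same comparison of $\psi$-classes under forgetful morphisms). The paper explicitly points out, in the Remark immediately following the Proposition, that the usual string equation \emph{fails} for $\ke$-quasimap invariants: the universal curve over $\QmapWe$ is \emph{not} isomorphic to $\mathrm{Q}^\ke_{g,k+1}(\WmodG,\beta)$, because an added marking can collide with a base point. So the step you defer to ``bookkeeping''---verifying that the node-$\psi$-class comparisons and descendant calculus hold ``uniformly for all $\ke\geq 0+$ (in particular across boundary strata where base points may appear)''---is precisely where the argument breaks; it is conceptual, not merely clerical, and the claim that the proof ``reduces verbatim to the Gromov--Witten case'' is not correct.

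The paper acknowledges that your route can be salvaged: one can formulate a \emph{modified} string equation using the fact that the universal curve $\cC_{g,k,\beta}$ is a proper Deligne--Mumford substack of $\fM_{g,k+1}(\WmodG,\beta)$ for a different stability condition, and this modified equation suffices to run the classical argument. But you would need to state and prove that modified equation, not simply invoke the GW-theoretic one.

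The paper's own proof sidesteps the issue entirely. It introduces $\CC^*$-equivariant classes $p_0,p_\infty\in H^*_{\CC^*}(\PP^1)$ with $p_0|_0=z,\;p_0|_\infty=0,\;p_\infty|_0=0,\;p_\infty|_\infty=-z$, and considers the graph-space generating series $\lla \gamma p_0,\delta p_\infty\rra^{QG^\ke}_2$, which by construction lies in $\Lambda[[z]]$ (no pole at $z=0$). Virtual $\CC^*$-localization on $\QGraphe$ identifies this series with $\sum_i \lla \frac{\gamma^i}{z-\psi},\gamma\rra^\ke_{0,2}\,\lla\delta,\frac{\gamma_i}{-z-\psi}\rra^\ke_{0,2}$. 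The latter expands as $\int_{\WmodG}\gamma\delta + O(1/z)$, and since the whole series has no pole at $z=0$, it must be constant and equal to $\int_{\WmodG}\gamma\delta$. Summing against $\gamma^j$ then gives the claim immediately. This buys an $\ke$-uniform proof with no appeal to the string equation at all; the price is that it is specific to targets where the $\CC^*$-action on graph spaces and the fixed-locus analysis of \S4 are available—which is exactly the setting of this paper.
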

\begin{proof}
Consider two equivariant cohomology classes $p_0,p_\infty\in H^*_{\CC^*}(\PP^1)$, defined by specifying their restrictions at the fixed points:
\begin{equation}\label{equi insertions}
p_0|_0=z,\; p_0|_\infty=0,\;\;\; {\text {and}}\;\;\; p_\infty|_0=0,\; p_\infty|_\infty=-z.
\end{equation}

By definition, for arbitrary cohomology classes $\gamma,\delta\in H^*(\WmodG)$, the generating series 
$$\lla \gamma p_0,\delta p_\infty\rra^{QG^\ke}_2=\sum_{m\geq 0,\beta\geq 0} \frac{q^\beta }{m!}
\langle \gamma p_0, {\bf t},\dots ,{\bf t},\delta p_\infty \rangle^{QG^\ke}_{2+m,\beta}
$$
is an element in $\Lambda[[z]]$. 
We now calculate it with the virtual localization theorem. Due to the insertions of $p_0$ and $p_\infty$, only the fixed loci $F^{k_1,\beta_1}_{k_2,\beta_2}$
for which the first marking lies over $0\in\PP^1$
and the last marking lies over $\infty\in\PP^1$ have nonvanishing contribution to the integral $\langle \gamma p_0, {\bf t},\dots ,{\bf t},\delta p_\infty \rangle^{QG^\ke}_{2+m,\beta}$.
In view of the description \eqref{simple loci} of the fixed point loci, $\CC^*$-localization gives
$$\lla \gamma p_0,\delta p_\infty\rra^{QG^\ke}_2=
\sum_i \lla \frac{\gamma^i}{z-\psi},\gamma\rra^\ke_{0,2} \lla\delta, \frac{\gamma_i}{-z-\psi}\rra^\ke_{0,2}.
$$
Now the right-hand side of the above equality has the form
$$\int_{\WmodG}\gamma\delta +O(1/z).
$$
while the left-hand side does not have a pole at $z=0$. We conclude that both sides are {\it constant} in $z$ and equal to $\int_{\WmodG}\gamma\delta $.

The Proposition follows now immediately:
\begin{align*}\left[ (S^\ke)^\star(-z)\circ S^\ke(z)\right](\gamma)
&=\sum_j\gamma^j\sum_i \lla \frac{\gamma^i}{z-\psi},\gamma\rra^\ke_{0,2} \lla\gamma_j, \frac{\gamma_i}{-z-\psi}\rra^\ke_{0,2}\\
&=\sum_j\gamma^j(\int_{\WmodG}\gamma\gamma_j)=\gamma .
\end{align*}
\end{proof}

\begin{Rmk} The usual proof of the unitary property in Gromov-Witten theory uses the string equation, as well as the splitting property of Gromov-Witten invariants, 
see \cite{Givental-elliptic}. While quasimap invariants satisfy splitting, the usual form of the string equation fails in general, the reason being that the universal curve  
over $\QmapWe$ is {\it not} isomorphic to $\mathrm{Q}^\ke_{g,k+1}(\WmodG,\beta)$. Nevertheless, the universal curve $\cC_{g,k,\beta}$ 
{\it is} a proper Deligne-Mumford substack in
$\fM _{g,k+1}(\WmodG,\beta)$ (corresponding to a different stability condition), and one may write a modified version of the string equation involving intersection numbers on 
$\cC$. This modified equation suffices to adapt the Gromov-Witten argument to obtain Proposition \ref{unitary}. We  prefer the proof given here as it is another illustration
of the usefulness of $\CC^*$-localization on graph spaces in the genus zero theories. 
\end{Rmk}

\subsection{The $P$-series and Birkhoff factorization of $J^\ke$} \label{Birkhoff}
Next we consider yet another generating function on graph spaces:

\begin{equation}\label{P-series def}
P^\ke({\bf t}, z):=\sum_i\gamma^i \lla \gamma_ip_\infty\rra^{QG^\ke}_1=\sum_i\gamma^i\sum_{m\geq 0,\beta\geq 0} \frac{q^\beta }{m!}
\langle \gamma_ip_\infty, {\bf t},\dots ,{\bf t}\rangle^{QG^\ke}_{1+m,\beta}.
\end{equation}
Being $\CC^*$-equivariant integrals, the graph space brackets take values in $\QQ[z]$, hence $P^\ke({\bf t}, z)$ is an element of $H^*(\WmodG,\Lambda)[[z]]$ for each ${\bf t}$ (convergent in the $q$-adic topology in the sense of \eqref{completion}).

Again, we apply virtual localization to $P^\ke$. The calculation is similar to the one done in the proof of Proposition \ref{Unitary}. In this case, the fixed loci that contribute
are those of the form $F^{k_1,\beta_1}_{1+k_2,\beta_2}$, for which the first 
marking lies over $\infty\in\PP^1$.

\begin{align*}P^\ke&=\sum_i\gamma^i\sum_j \lla \frac{\gamma^j}{z(z-\psi)}\rra^\ke_{0,1}\lla(-z)\gamma_i, \frac{\gamma_j}{-z(-z-\psi)}\rra^\ke_{0,2}\\
&=\sum_i\gamma^i\lla \gamma_i, \frac{\sum_j\gamma_j\lla \frac{\gamma^j}{z(z-\psi)}\rra^\ke_{0,1}}{-z-\psi}\rra^\ke_{0,2}\\
&=\sum_i\gamma^i\lla \gamma_i, \frac{J^\ke}{-z-\psi}\rra^\ke_{0,2}\\
&=(S^\ke)^\star (-z)(J^\ke),
\end{align*}
where we used the equality \eqref{bracketJ}, with the convention explained there for the unstable terms in $ \lla \frac{\gamma^j}{z(z-\psi)}\rra^\ke_{0,1}$.
Note that the relation
\begin{equation}\label{PfromJ}
P^\ke=(S^\ke)^\star (-z)(J^\ke).
\end{equation}
shows in particular that $P^\ke$ has the asymptotic form
$$P^\ke=\one+O(q).
$$
(This can easily be seen directly by calculating the $\beta=0$ integrals.
Indeed, for $\beta=0$ and $m\geq 0$, there is an isomorphism $${QG^\ke}_{1+m,0}(\WmodG)\cong
\WmodG\times\PP^1[m+1],$$
where $\PP^1[m+1]\cong\overline{M}_{0,m+1}(\PP^1,1)$ is the Fulton-MacPherson space of stable marked genus zero curves with a rigid component. Hence the 
 $\beta=0$ integrals are of the form
$$\left( \int_{\WmodG}\gamma_i{\bf t}^m\right )\left(\int_{\overline{M}_{0,m+1}(\PP^1,1)}ev_1^*(p_\infty)\right).$$
The second factor can be written as $\int_{\PP^1}(ev_1)_*(\one)\cap p_\infty$, hence it vanishes except when $m=0$, in which case $ev_1$ is an isomorphism and
the integral is equal to $1$. We deduce that the only nonvanishing contribution is $\sum_i\gamma^i\left(\int_{\WmodG}\gamma_i\right)=\one$.)

Using the unitary property in equation \eqref{PfromJ} we obtain the following Theorem.

\begin{Thm} \label{Birkhoff Thm}
For every $\ke\geq 0+$
$$J^\ke(z)=S^\ke(z)(P^\ke)=\sum_i S^\ke(z)(\gamma^i)\lan \gamma_i,P^\ke\ran,
$$
where $\lan -,-\ran$ is the intersection pairing $\int_{\WmodG}$.
\end{Thm}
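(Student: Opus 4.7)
The plan is to deduce the theorem as a short formal consequence of two ingredients already in hand by this point in the paper. The first is the identity
$$P^\ke = (S^\ke)^\star(-z)(J^\ke)$$
displayed immediately above the theorem statement, which was obtained by applying $\CC^*$-virtual localization to the bracket $\lla \gamma_i p_\infty \rra^{QG^\ke}_1$ on the graph space: the insertion $p_\infty$ kills contributions from fixed loci whose distinguished marking lies over $0\in\PP^1$, the product factorization \eqref{contproduct} of the equivariant Euler classes of the virtual normal bundles splits each contributing locus $F^{k_1,\beta_1}_{1+k_2,\beta_2}$ into two factors, and the representation \eqref{bracketJ} of $J^\ke$ (together with the conventions at $(m,\beta)=(0,0),(1,0)$ and at the small unstable base-point components) lets us sum over the side carrying no $p_\infty$ insertion into the $J^\ke$-series. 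The second ingredient is the unitary relation $(S^\ke)^\star(-z)\circ S^\ke(z) = Id$ from Proposition \ref{Unitary}.

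First I would apply $S^\ke(z)$ to both sides of $P^\ke = (S^\ke)^\star(-z)(J^\ke)$, giving
$$S^\ke(z)(P^\ke) = S^\ke(z)\circ (S^\ke)^\star(-z)(J^\ke).$$
Expanding the left-hand side via Poincaré duality and the $\Lambda$-linearity of $S^\ke(z)$ in the form $P^\ke = \sum_i \gamma^i \lan \gamma_i, P^\ke\ran$ gives the right-hand side $\sum_i S^\ke(z)(\gamma^i)\lan \gamma_i, P^\ke\ran$ of the displayed formula in the theorem.

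The only thing left is to observe that Proposition \ref{Unitary} upgrades from a one-sided to a two-sided inverse, i.e.\ that $S^\ke(z)\circ (S^\ke)^\star(-z) = Id$ as well. This is a purely formal invertibility argument: both $S^\ke(z)$ and $(S^\ke)^\star(-z)$ are $\Lambda$-linear endomorphisms of the completed space $H^*(\WmodG,\Lambda)\{\!\{z,z^{-1}\}\!\}$ whose leading term modulo $q$ is the invertible multiplication operator $e^{\pm{\bf t}/z}$, so each is an automorphism in the $q$-adic completion (via a geometric-series inversion of the higher-order $q$-corrections), and hence a left inverse is automatically a right inverse. Combining this with the previous display collapses the right-hand side to $J^\ke$ and completes the proof.

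I do not anticipate any substantive geometric obstacle; the whole argument is algebraic once the localization computation producing $P^\ke = (S^\ke)^\star(-z)(J^\ke)$ is accepted. The main bookkeeping point to be careful about is that all compositions make sense in the completed coefficient ring \eqref{completion} and respect the grading in powers of $1/z$ appearing in $J^\ke$; since $P^\ke\in H^*(\WmodG,\Lambda)[[z]]$ with $P^\ke=\one+O(q)$, this is automatic.
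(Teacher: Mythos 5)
Your proof is correct and follows the same route as the paper: start from the localization identity $P^\ke = (S^\ke)^\star(-z)(J^\ke)$ established just before the theorem, then invert using Proposition \ref{Unitary}. The only place the paper is terse is exactly the point you flagged -- Proposition \ref{Unitary} gives $(S^\ke)^\star(-z)\circ S^\ke(z)=\mathrm{Id}$, while one needs $S^\ke(z)\circ(S^\ke)^\star(-z)=\mathrm{Id}$ to apply $S^\ke(z)$ to \eqref{PfromJ} -- and your observation that both operators are $q$-adically invertible (leading terms $e^{\pm{\bf t}/z}$), so a one-sided inverse is automatically two-sided, is precisely the standard way to close this gap.
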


Observe that while the $J$-functions will in general depend both on positive and negative powers of $z$ when $\ke\leq 1$, 
the operator $S^\ke$ is a series of $1/z$ and $P^\ke$ is a series of $z$. Hence we may view Theorem \ref{Birkhoff Thm} as a kind of Birkhoff factorization 
of the $J^\ke$-functions.

On the other hand, for $\ke >1$, the usual Gromov-Witten $J$-function is only a series of $1/z$, with asymptotic expansion $\one+{\bf t}/z+ O(1/z^2)$. 
This implies that $P^{\ke >1}=\one$ and the Theorem becomes the
familiar result
$$J=S(z)(\one),$$
which follows immediately from the string equation.

\subsection{The case of semi-positive targets} \label{semi-positive} 
We assume in this subsection that the triple $(W,\G,\theta)$ satisfies $$\beta(\det(T_W))\geq0$$
for all $L_\theta$-effective classes $\beta$ and will call such triples {\it semi-positive}. 

In the projective case, the semi-positive triples give GIT quotients with nef anti-canonical class. 
Among quasiprojective targets, the main examples we have 
in mind are holomorphic symplectic quotients, such as Nakajima quiver varieties, and local targets $\cE/\!\!/\G$ as in Example \ref{local targets} with 
$0\leq \beta (\det T_W)+\beta(\det E)$ for all $L_\theta$-effective $\beta$.

First we specialize to the case ${\bf t}=0$. Since the virtual dimension of $QG^\ke_{0,1,\beta}(\WmodG)$
is equal to
$$\dim (\WmodG)+1+\beta (\det T_W)$$
and the insertion $\gamma_i p_\infty$ has complex degree at most $\dim (\WmodG)+1$, we conclude that 
$$\lan \gamma_i, P^\ke|_{{\bf t}=0}\ran=0$$
unless $\gamma_i=[\mathrm{pt}]$ is the point class. 
Furthermore, $\lan [\mathrm{pt}],  P^\ke|_{{\bf t}=0}\ran$ is an element of $\Lambda=\QQ[[q]]$, of the form 
\begin{equation}\label{J0}
\lan [\mathrm{pt}],  P^\ke|_{{\bf t}=0}\ran=1+\sum_{\{\beta\neq 0, \beta(\det T_W)=0\}}a_\beta q^\beta.
\end{equation}
In particular, it is invertible, and it is equal to $1$ if $\beta(\det T_W)\geq 1$ for all effective $\beta\neq 0$. We will say that the triple $(W,\G,\theta)$ is a {\it Fano triple} in this case. 

Let $S^\ke_0(z)$ denote the operator $S^\ke$ at ${\bf t}=0$. Then
\begin{align*}S^\ke _0(z)(\one)&=\one+\sum_i\gamma_i(\sum_{\beta\neq 0}q^\beta\lan\frac{\gamma^i}{z-\psi},\one\ran_{0,2,\beta}^\ke)\\&=
\one+\frac{1}{z}\sum_i\gamma_i\left(\sum_{\beta\neq 0}q^\beta \lan \gamma^i,\one\ran_{0,2,\beta}^\ke\right)+O(\frac{1}{z^2}).
\end{align*}

\begin{Cor}\label{nef small} For semi-positive triples $(W,\G,\theta)$ and every $\ke \geq 0+$
\begin{equation}
\frac{J^\ke |_{{\bf t}=0}}{\lan [\mathrm{pt}],  P^\ke|_{{\bf t}=0}\ran}=\one+\sum_i\gamma_i(\sum_{\beta\neq 0}q^\beta\lan\frac{\gamma^i}{z-\psi},\one\ran_{0,2,\beta}^\ke).
\end{equation}
\end{Cor}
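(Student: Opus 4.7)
The plan is to invoke the Birkhoff factorization of Theorem \ref{Birkhoff Thm} at ${\bf t}=0$ and then exploit semi-positivity to reduce $P^\ke|_{{\bf t}=0}$ to a scalar multiple of the unit $\one$.

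First I would unwind the definition \eqref{P-series def} of $P^\ke$ at ${\bf t}=0$: all summands with $m\ge 1$ insertions of ${\bf t}$ vanish by linearity, so
$$P^\ke|_{{\bf t}=0} = \sum_i \gamma^i \sum_\beta q^\beta \lan \gamma_i p_\infty\ran^{QG^\ke}_{1,\beta}.$$
On $QG^\ke_{0,1,\beta}(\WmodG)$ the virtual dimension equals $\dim(\WmodG)+1+\beta(\det T_W)$, while the equivariant insertion $\gamma_i p_\infty$ has complex degree $\deg_\CC(\gamma_i)+1$. Matching these two quantities, and using semi-positivity $\beta(\det T_W)\ge 0$ together with $\deg_\CC(\gamma_i)\le \dim(\WmodG)$, forces $\beta(\det T_W)=0$ and $\gamma_i=[\mathrm{pt}]$, whose Poincar\'e dual is $\gamma^i=\one$. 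Therefore $P^\ke|_{{\bf t}=0}$ is the scalar
$$P^\ke|_{{\bf t}=0} = \one\cdot \lan [\mathrm{pt}], P^\ke|_{{\bf t}=0}\ran,$$
an element of the form prescribed by \eqref{J0}, hence invertible in $\Lambda$.

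Next I substitute into the factorization in Theorem \ref{Birkhoff Thm} and use linearity of $S^\ke_0(z)$ to obtain
$$J^\ke|_{{\bf t}=0} = S^\ke_0(z)\bigl(P^\ke|_{{\bf t}=0}\bigr) = \lan [\mathrm{pt}], P^\ke|_{{\bf t}=0}\ran \cdot S^\ke_0(z)(\one).$$
Dividing by the invertible scalar and reading off the explicit expansion of $S^\ke_0(z)(\one)$ recorded just before the Corollary yields precisely the identity in the statement.

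There is no deeper obstacle here beyond the dimensional vanishing: once Theorem \ref{Birkhoff Thm} is granted, everything reduces to the observation that under semi-positivity the only nonzero contributions to $\lan \gamma_i p_\infty\ran^{QG^\ke}_{1,\beta}$ arise from $\gamma_i=[\mathrm{pt}]$ in classes $\beta$ with $\beta(\det T_W)=0$. The only care needed is in checking that this dimension argument is uniform over all $L_\theta$-effective $\beta$; this is an immediate consequence of the formula for $\mathrm{vdim}(\QGraphe)$ inherited from Theorem \ref{main} applied to the graph-space obstruction theory.
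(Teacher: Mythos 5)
Your proposal is correct and follows essentially the same route as the paper: the paper carries out exactly this dimension count on $QG^\ke_{0,1,\beta}(\WmodG)$ in the paragraph preceding the corollary (establishing \eqref{J0}), computes the $z$-expansion of $S^\ke_0(z)(\one)$, and then proves the corollary with the one-line instruction ``Set ${\bf t}=0$ in Theorem \ref{Birkhoff Thm}.'' You have simply expanded that terse sentence into the substitution and cancellation it implicitly relies on.
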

\begin{proof} Set ${\bf t}=0$ in Theorem \ref{Birkhoff Thm}. \end{proof}

\begin{Rmk} In the very special case when $\WmodG$ is a nef complete intersection in projective space $\PP^n$ and $\ke=0+$, 
Corollary \ref{nef small} was obtained independently in \cite{CZ}.
Their proof is different, as it uses localization with respect to the big torus action on $\PP^n$.
\end{Rmk}

By their definition \eqref{Je}, the big $J^\ke$-functions
have the form
$$\frac{{\bf t}}{z}+J^\ke |_{{\bf t}=0}+O(1/z^2).$$
If we write the $1/z$-expansion of $J^\ke |_{{\bf t}=0}$ as
$$J^\ke |_{{\bf t}=0}=J^\ke_0(q)\one+J^\ke_1(q)\frac{1}{z}+O(1/z^2),$$
then we deduce from Corollary \ref{nef small} that 
\begin{equation}J^\ke_0(q)=\lan [\mathrm{pt}],  P^\ke|_{{\bf t}=0}\ran=1+O(q)\in \Lambda,\end{equation}
and
\begin{equation}\label{J1}
J^\ke_1(q)=J^\ke_0(q)\sum_i\gamma_i\left(\sum_{\beta\neq 0}q^\beta \lan \gamma^i,\one\ran_{0,2,\beta}^\ke\right)\in qH^*(\WmodG,\Lambda).
\end{equation}
Hence
\begin{equation}\label{J-asy}
J^\ke({\bf t},z)=J^\ke_0(q)\one+\frac{{\bf t}+J^\ke_1(q)}{z}+O(1/z^2).
\end{equation}
This implies in particular that 
$$P^\ke({\bf t},z)=(S^\ke(-z))^\star(J^\ke({\bf t},z))=J^\ke_0(q)\one+O(1/z).$$
Since $P^\ke$ has no pole at $z=0$, it follows that $P^\ke({\bf t},z)=J^\ke_0(q)\one$. From this and Theorem \ref{Birkhoff Thm} we deduce the following 
extension of Corollary \ref{nef small}:
\begin{Cor}\label{nef big}
For semi-positive $(W,\G,\theta)$ we have
\begin{equation} \frac{J^\ke({\bf t},z)}{J^\ke_0(q)}=S_{\bf t}^\ke(z)(\one)=\sum_i\gamma_i\lla\frac{\gamma^i}{z-\psi},\one\rra_{0,2}^\ke.
\end{equation}
Furthermore, 
\begin{equation}\label{mirror 1}\frac{{\bf t}+J^\ke_1(q)}{J_0^\ke(q)}={\sum_i\gamma_i\lla\gamma^i,\one\rra_{0,2}^\ke}-\one.\end{equation}
\end{Cor}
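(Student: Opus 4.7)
The strategy is to combine the Birkhoff factorization $J^\ke = S^\ke(z)(P^\ke)$ of Theorem \ref{Birkhoff Thm} with the drastically simplified form that the series $P^\ke({\bf t},z)$ takes under semi-positivity, namely $P^\ke = J^\ke_0(q)\one$. Once this collapse is verified, the first asserted equality is immediate:
$$J^\ke({\bf t},z) \;=\; S^\ke(z)\bigl(J^\ke_0(q)\one\bigr) \;=\; J^\ke_0(q)\,S^\ke_{\bf t}(z)(\one)$$
by $\Lambda$-linearity of $S^\ke(z)$ in its argument.

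The core step is therefore the identity $P^\ke = J^\ke_0(q)\one$. I would derive it from two competing constraints on $P^\ke$. First, the dimension count for insertions of type $\gamma_i p_\infty$ on $QG^\ke_{0,1,\beta}(\WmodG)$ carried out in \eqref{J0}--\eqref{J1} forces the $J^\ke$-function to have the schematic shape \eqref{J-asy},
$$J^\ke({\bf t},z) \;=\; J^\ke_0(q)\one \;+\; \frac{{\bf t}+J^\ke_1(q)}{z} \;+\; O(1/z^2).$$
Inserting this into the identity $P^\ke = (S^\ke(-z))^\star(J^\ke)$ from \eqref{PfromJ}, and using that $(S^\ke(-z))^\star$ is a formal series in $1/z$ with leading term the identity (by the unitary property of Proposition \ref{Unitary} applied to the asymptotic $S^\ke(z)=\mathrm{id}+O(1/z)$), yields $P^\ke = J^\ke_0(q)\one + O(1/z)$. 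The second constraint is intrinsic: $P^\ke$ is defined in \eqref{P-series def} as a generating series of $\CC^*$-equivariant integrals on the graph spaces $QG^\ke_{0,1+m,\beta}(\WmodG)$, so it belongs to $H^*(\WmodG,\Lambda)[[z]]$ and in particular has no negative powers of $z$. The only element compatible with both constraints is $P^\ke({\bf t},z) = J^\ke_0(q)\one$.

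For the second assertion I would match coefficients of $1/z$ on both sides of the first equality. On the left, \eqref{J-asy} gives $({\bf t}+J^\ke_1(q))/J^\ke_0(q)$. On the right, expanding $1/(z-\psi) = \sum_{k\geq 0}\psi^k/z^{k+1}$ in the defining formula for $S^\ke_{\bf t}(z)(\one)$ shows that the $1/z$-coefficient is $\sum_i\gamma_i\lla\gamma^i,\one\rra^\ke_{0,2}$ taken over the stable summands only, while the $(m,\beta)=(0,0)$ piece (governed by the convention $\int_{\WmodG}\gamma^i\one$) contributes $\one$ to the $z^0$ part of $S^\ke_{\bf t}(z)(\one)$ rather than to its $1/z$ part. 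Transposing this $\one$ to the right-hand side recovers the stated formula. The only genuine subtlety throughout is the bookkeeping of unstable conventions for the $(0,0)$ and $(1,0)$ terms in the double brackets; no new geometric input beyond Theorem \ref{Birkhoff Thm}, Proposition \ref{Unitary}, and the semi-positive dimension bound is needed.
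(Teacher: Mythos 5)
Your proof is essentially identical to the paper's: the key step $P^\ke = J^\ke_0(q)\one$ is obtained there exactly as you describe, by combining the asymptotic shape \eqref{J-asy} with the relation $P^\ke = (S^\ke(-z))^\star(J^\ke)$ from \eqref{PfromJ} and the fact that $P^\ke$ lies in $H^*(\WmodG,\Lambda)[[z]]$, and both halves of the corollary then fall out of Theorem \ref{Birkhoff Thm} by comparing $z^0$ and $z^{-1}$ coefficients. One small inaccuracy: that $(S^\ke(-z))^\star$ has leading term the identity follows directly from its defining series (the $(m,\beta)=(0,0)$ term), not from Proposition \ref{Unitary} — the unitary property is what produces \eqref{PfromJ} in the first place, not the normalization of the leading term.
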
 

Equation \eqref{mirror 1} allows us to easily determine, in the semi-positive case, all primary $\ke$-quasimap invariants with a fundamental class insertion in terms of the first
two coefficients $J^\ke_0(q)$ and $J^\ke_1(q)$ of the $1/z$ expansion of $J^\ke |_{{\bf t}=0}$. 

\begin{Cor}\label{whatever} Assume that $(W,\G,\theta)$ is semi-positive. Then

$(a)$ For every $i$ and every $m\geq 2$,
$$\sum_{\beta\neq 0}q^\beta \lan \gamma^i,\one, {\bf t},\dots,{\bf t}\ran_{0,2+m,\beta}^\ke=0.$$

$(b)$ For every $i$,
$$\sum_{\beta\neq 0}q^\beta \lan \gamma^i,\one\ran_{0,2,\beta}^\ke=\lan \frac{J^\ke_1(q)}{J^\ke_0(q)},\gamma^i\ran.$$

$(c)$ If $j\neq i$, then
$$\sum_{\beta\neq 0}q^\beta \lan \gamma^i,\one, \gamma_j\ran_{0,3,\beta}^\ke=0,$$
while
$$1+\sum_{\beta\neq 0}q^\beta \lan \gamma^j,\one, \gamma_j\ran_{0,3,\beta}^\ke=(J^\ke_0(q))^{-1}$$
for every $j$.

\end{Cor}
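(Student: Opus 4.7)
The plan is to extract all three statements from equation \eqref{mirror 1} of Corollary \ref{nef big} by expanding the double bracket on the right-hand side and matching coefficients as a formal polynomial in $\mathbf t$. Recall that
\[
\lla\gamma^i,\one\rra_{0,2}^\ke=\sum_{m\geq 0,\beta\geq 0}\frac{q^\beta}{m!}\lan\gamma^i,\one,\mathbf t,\dots,\mathbf t\ran_{0,2+m,\beta}^\ke,
\]
with the understanding that the $(m,\beta)=(0,0)$ term is the unstable correction $\int_{\WmodG}\gamma^i\one=\int_{\WmodG}\gamma^i$.

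First I would isolate the $\beta=0$ contribution. The $m=0$ unstable correction gives $\sum_i\gamma_i\int_{\WmodG}\gamma^i=\one$. For $m=1$, since $\overline M_{0,3}$ is a point and the degree-zero moduli is $\overline M_{0,3}\times\WmodG$, one has $\lan\gamma^i,\one,\mathbf t\ran_{0,3,0}^\ke=\int_{\WmodG}\gamma^i\mathbf t$, and $\sum_i\gamma_i\int_{\WmodG}\gamma^i\mathbf t=\mathbf t$. For $m\geq 2$ the degree-zero virtual class is $[\overline M_{0,2+m}]\times[\WmodG]$, while the integrand $ev_1^*\gamma^i\cdot ev_2^*\one\cdot\prod ev_j^*\mathbf t$ is pulled back from $\WmodG$, so $\int_{\overline M_{0,2+m}}1=0$ for dimensional reasons. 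Substituting into \eqref{mirror 1} yields
\[
\frac{\mathbf t+J^\ke_1(q)}{J_0^\ke(q)}\;=\;\mathbf t\;+\;\sum_i\gamma_i\sum_{\substack{\beta\neq 0\\ m\geq 0}}\frac{q^\beta}{m!}\lan\gamma^i,\one,\mathbf t,\dots,\mathbf t\ran_{0,2+m,\beta}^\ke.
\]

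Now I would match coefficients in $\mathbf t$. The left-hand side is affine linear in $\mathbf t$, so all $\mathbf t$-coefficients of order $\geq 2$ on the right must vanish; pairing with $\gamma^i$ gives statement (a). The constant term in $\mathbf t$ on the right equals $\frac{J^\ke_1}{J^\ke_0}$; pairing with $\gamma^i$ (using $\int_{\WmodG}\gamma^i\gamma_k=\delta^i_k$) produces statement (b). Finally, the linear-in-$\mathbf t$ part of the identity reads
\[
\frac{1-J^\ke_0(q)}{J^\ke_0(q)}\,\mathbf t\;=\;\sum_i\gamma_i\sum_{\beta\neq 0}q^\beta\lan\gamma^i,\one,\mathbf t\ran_{0,3,\beta}^\ke;
\]
specializing $\mathbf t=\gamma_j$ and pairing with $\gamma^i$ yields $\sum_{\beta\neq 0}q^\beta\lan\gamma^i,\one,\gamma_j\ran_{0,3,\beta}^\ke=\delta^i_j\bigl((J^\ke_0)^{-1}-1\bigr)$, which is exactly statement (c).

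There is no real obstacle: once the unstable/degree-zero bookkeeping is done correctly, everything follows from equation \eqref{mirror 1} by comparing coefficients of the free formal variables $t_j$ in $\mathbf t=\sum t_j\gamma_j$. The only point that deserves care is making sure the $m=0$ unstable convention for $\lla\gamma^i,\one\rra^\ke_{0,2}$ and the $m=1$ stable degree-zero integral are correctly separated from the $\beta\neq 0$ sums that appear in the statement.
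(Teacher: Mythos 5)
Your proof is correct and takes essentially the same approach as the paper: both arguments extract (a), (b), (c) from equation \eqref{mirror 1} by matching the constant, linear, and higher-order pieces in the formal variable ${\bf t}$. You spell out the $\beta=0$ bookkeeping (the $m=0$ unstable term, the $m=1$ dimension-zero integral, and the dimensional vanishing for $m\geq 2$) more explicitly than the paper, which leaves this implicit and simply quotes \eqref{J1} for part (b), but the substance is identical.
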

\begin{proof} Write ${\bf t}=\sum_it_i\gamma_i$ as before. Part $(a)$ follows directly from \eqref{mirror 1}, since the left-hand side is linear in ${\bf t}$. Part $(b)$ is \eqref{J1}.
Using $(a)$ and $(b)$ in \eqref{mirror 1}, we get
$${\bf t}=J^\ke_0(q)\left ({\bf t}+\sum_i\gamma_i\sum_{\beta\neq 0}q^\beta \lan \gamma^i,\one, {\bf t}\ran_{0,3,\beta}^\ke\right).$$
Fixing $j$ and making $t_i=0$ for $i\neq j$ gives part $(c)$.
\end{proof}

In fact, simple degree counting gives a more precise version of part $(b)$.
For any target $\WmodG$ we give a grading to $H^*(\WmodG,\Lambda)\{\!\{z,z^{-1}\}\!\}$ as follows:
\begin{itemize} 
\item elements in $H^*(\WmodG)$ have their usual cohomological degree;
\item $q^\beta$ has degree $2\beta(\det(T_W))$;
\item $z$ has degree 2.
\end{itemize}
It is immediate to check that the operator $S^\ke_0(z)$ preserves degree, hence $S^\ke_0(z)(\one)$ is an element of degree zero. 

Assume now that $\WmodG$ is projective, with $(W,\G,\theta)$ semi-positive.
As we already observed, $J_0^\ke(q)$ has degree zero. 
The $1/z$-coefficient in $S^\ke_0(z)(\one)$ is
$$\sum_i\gamma_i\left(\sum_{\beta\neq 0}q^\beta \lan \gamma^i,\one\ran_{0,2,\beta}^\ke\right)=\frac{J^\ke_1(q)}{J^\ke_0(q)},$$
which therefore has degree $2$. Hence the only possibly non-vanishing invariants $\lan \gamma^i,\one\ran_{0,2,\beta}^\ke$ are those for which 
$\gamma^i\in H^{\ge \dim_{\RR}(\WmodG)-2}(\WmodG)$.
Write
$$J^\ke_1(q)=f_0^\ke(q)\one+\sum_{j=1}^r f_j^\ke(q)D_j$$
where $f_0^\ke(q)\in q\Lambda$ is a degree $2$ element, 
$$f_0^\ke(q)=\sum_{\{\beta\neq 0, \beta(\det(T_W))=1\}}b_\beta q^\beta,$$
$\{D_1,\dots ,D_r\}$ is a basis of $H^2(\WmodG)$, and $f_j^\ke(q)\in q\Lambda$ are of degree zero, 
$$f_j^\ke(q)=\sum_{\{\beta\neq 0, \beta(\det(T_W))=0\}}c_{j,\beta} q^\beta.$$
We conclude
$$\sum_{\beta\neq 0}q^\beta \lan [\mathrm{pt}],\one\ran_{0,2,\beta}^\ke=\frac{f_0^\ke(q)}{J^\ke_0(q)},\;\;\; 
\sum_{\beta\neq 0}q^\beta \lan D^j,\one\ran_{0,2,\beta}^\ke=\frac{f_j^\ke(q)}{J^\ke_0(q)},$$
where $\{D^j\}$ is the dual basis in $H^{\dim_{\RR}(\WmodG)-2}(\WmodG)$.

Note that 
if $(W,\G,\theta)$ is Fano, then $J_0^\ke(q)=1$ and $J_1^\ke(q)=f_0^\ke(q)\one$, 
and if the Fano index is at least $2$ (i.e., if $\beta(\det T_W)\geq 2$ for all effective $\beta\neq 0$), 
then $J_1^\ke(q)=0$.

\begin{Rmk}\label{small I functions} The main point of the formulas in Corollary \ref{whatever} is that most often 
$J^\ke |_{{\bf t}=0}$ is explicitly
computable. 
Indeed, for many of the examples of targets listed in \S\ref{Examples} closed formulas for the ``small" $I$-function $I |_{{\bf t}=0}=J^{0+} |_{{\bf t}=0}$ are known:
\begin{itemize}
\item for toric varieties and complete intersections in toric varieties the formulas are due to Givental, see \cite{Givental};
\item for flag varieties of classical types $A,B,C$ and $D$, as well as zero
loci of regular sections of homogeneous vector bundles on such flag varieties, see \cite{BCK} and \cite{CKS};
\item the $I$-function of a local target $\cE/\!\!/\G$ over a base $\WmodG$ can be written explicitly in terms of the $I$-function of the base 
(this is easy when the bundle splits into a direct sum of line bundles and can be seen by ``abelianization" in the general case).
\item a formula in the case of the Hilbert schemes of points in $\CC^2$ (an example of Nakajima quiver variety) is due to the authors together with Diaconescu and Maulik, see \cite{CKP} for an account; the method generalizes to other Nakajima quiver varieties.
\end{itemize}
Of course, the formulas in the case $\ke=0+$ include the ones for any fixed $\ke>0$, 
since the corresponding $J^\ke_0(q)$ and $J^\ke_1(q)$
are just truncations of $I_0(q)$ and $I_1(q)$, respectively.

\end{Rmk}

\begin{Rmk}\label{noncompact} The situation is  even simpler in the non-compact examples.

(i) Consider a local Calabi-Yau target $\cE/\!\!/\G$ as in Example \ref{local targets} with 
$0= \beta (\det T_W)+\beta(\det E)$ for all effective $\beta$. Since the top degree of a cohomology class is strictly less than
${\mathrm{dim}}(\cE/\!\!/\G)$, the dimension estimate used to conclude \eqref{J0} implies that $J_0^\ke(q)=1$.

(ii) Consider a Nakajima quiver variety $\WmodG$ and let $-\lambda$ denote the $\T$-weight of its
holomorphic symplectic form. Then
$$J_0^\ke(q)=1,\;\;\;\; J_1^\ke(q)=\lambda f^\ke(q)$$
with $ f^\ke(q)\in q\QQ[[q]]$ a (non-equivariant) scalar function of degree zero.
This follows from the same dimension counting arguments and the fact that the moduli spaces of $\ke$-stable quasimaps (and the graph spaces as well) admit
{\it reduced} virtual classes $[\QmapWe]^{\mathrm{vir}}_{\mathrm{red}}$ of dimension one larger than the usual virtual dimension and satisfying
$$[\QmapWe]^{\mathrm{vir}}=\lambda [\QmapWe]^{\mathrm{vir}}_{\mathrm{red}}.$$
In terms of {\it reduced} quasimap invariants we have explicitly
$$f^\ke(q)=\sum_{\beta\neq 0} q^\beta \int_{[Q^\ke_{0,2}(\WmodG,\beta)]^{\mathrm{vir}}_{\mathrm{red}}} ev_1^*([{\mathrm {pt}}]) ev_2^*(\one).$$

The reduced virtual classes for stable maps to holomorphic symplectic varieties were introduced in \cite{MP}, \cite{OP}. The construction works equally well for quasimaps.

\end{Rmk}

\section{Genus zero $\ke$-wall-crossing}\label{J}

In this section we formulate genus zero wall-crossing formulas for quasimap invariants when varying the $\ke$-stability parameter as equality of generating functions
after certain transformations, or changes of variables.
\subsection{``String" transformations}
Let $$\gamma\in H^*(\WmodG,\Lambda)$$ 
be an invertible element of the form $\gamma=\one +O(q)$. (In the quasi-projective cases we may also take  $\gamma=\gamma_0 +O(q)$, with
$\gamma_0$ an invertible element in
$H^*_{\T,\mathrm{loc}}(\WmodG,\QQ)$.)
For each $\ke\geq 0+$ consider the generating series $\sum_i\gamma_i \lla \gamma^i,\gamma\rra^\ke_{0,2}$ for the {\it primary} $\ke$-quasimap invariants with a $\gamma$-insertion and put
\begin{equation}\label{tau e}
\tau^\ke_\gamma({\bf t}):=\sum_i\gamma_i \lla \gamma^i,\gamma\rra^\ke_{0,2}-\gamma.
\end{equation}
We have the $z$-expansion 
$$S^\ke_{\bf t}(z)(\gamma)=\gamma +\tau^\ke_\gamma({\bf t})\frac{1}{z}+O(\frac{1}{z^2}).$$
Further, 
$${\bf t}\mapsto \tau^\ke_\gamma({\bf t})={\bf t}\gamma+O(q)={\bf t}+O(q),$$
hence $\tau^\ke_\gamma$ is invertible as a transformation on $H^*(\WmodG,\Lambda)$. If for a pair of stability parameters $0+\leq\ke_1<\ke_2\leq \infty$ we
set 
\begin{equation}\label{gen string}
\tau^{\ke_1,\ke_2}_\gamma ({\bf t}) := (\tau^{\ke_1}_\gamma)^{-1}\circ \tau^{\ke_2}_\gamma ({\bf t}),
\end{equation}
then
\begin{equation}\label{1/z matching}
S^{\ke_1}_{\tau^{\ke_1,\ke_2}_\gamma ({\bf t})}(z)(\gamma)=S^{\ke_2}_{\bf t}(z)(\gamma)\;\;\;\; ({\mathrm{mod}}\; \frac{1}{z^2}).
\end{equation}

\begin{Conj}\label{conj S} For all $\gamma\in H^*(\WmodG,\Lambda)$ of the form $\one +O(q)$, and all $0+\leq\ke_1<\ke_2\leq \infty$
$$S^{\ke_1}_{\tau^{\ke_1,\ke_2}_\gamma ({\bf t})}(z)(\gamma)=S^{\ke_2}_{\bf t}(z)(\gamma).
$$
\end{Conj}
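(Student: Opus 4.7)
The plan is to prove Conjecture \ref{conj S} under the hypothesis of Theorem \ref{Main} (a torus $\T$ acts on $W$, commutes with $\G$, and has isolated $\T$-fixed points on $\WmodG$) by adapting Givental's characterization of the Lagrangian cone in genus zero. For each stability parameter $\ke\geq 0+$ I would introduce the cone
$$\cL^\ke \subset H^*_\T(\WmodG,\Lambda)\{\!\{z,z^{-1}\}\!\}$$
swept out by the ruling $\gamma \mapsto S^\ke_{\bf t}(z)(\gamma)$ as $\gamma$ and $\bf t$ vary, and recast Conjecture \ref{conj S} as the claim that this ruled cone is $\ke$-independent: once $\cL^{\ke_1} = \cL^{\ke_2}$ is known, the matching of tangent directions \eqref{1/z matching} forced by the definition of $\tau^{\ke_1,\ke_2}_\gamma$ propagates along the ruling to give the full equality $S^{\ke_1}_{\tau^{\ke_1,\ke_2}_\gamma({\bf t})}(\gamma)=S^{\ke_2}_{\bf t}(\gamma)$.

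The technical heart will be a torus-localization characterization lemma: an element $F$ of the completed loop space lies on $\cL^\ke$ if and only if, at each isolated $\T$-fixed point $p_\alpha \in \WmodG$, the restriction $F|_{p_\alpha}(z)$ is a rational function whose poles away from $\{0,\infty\}$ occur only at values $z=\chi/d$ predicted by the $\T$-weights $\chi$ at $p_\alpha$ and integers $d\geq 1$, with residues expressible by a recursion involving universal edge factors $E^d_{p_\alpha\to p_\beta}$ and the values of $F$ at neighbouring fixed points. To produce this lemma I would apply $\T\times\CC^*$-localization to the graph-space series $P^\ke$ of \S\ref{Birkhoff}; the $\CC^*$-fixed decomposition of \S\ref{fixed loci} identifies the distinguished residue contributions with $J^\ke=S^\ke(P^\ke)$, while the non-distinguished pieces, after further $\T$-localization, produce exactly the recursive residue pattern characterizing membership in $\cL^\ke$.

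The crucial observation that makes the whole argument uniform in $\ke$ is that the edge factors $E^d_{p_\alpha\to p_\beta}$ are $\ke$-\emph{independent}. An edge corresponds to a single $\PP^1$ mapping onto a one-dimensional $\T$-orbit closure as a degree-$d$ cover; such a map carries no rational tails, no interior markings and no base-points, so its moduli stack and virtual class do not see the stability parameter and coincide with the $\ke=\infty$ data. The entire $\ke$-dependence is absorbed into the vertex factors, and those vertex factors depend on $\ke$ only through the $\T$-localizations of $S^\ke$ at the fixed points. Consequently the recursions defining $\cL^{\ke_1}$ and $\cL^{\ke_2}$ are formally identical; since by \eqref{1/z matching} the two series $S^{\ke_1}_{\tau^{\ke_1,\ke_2}_\gamma({\bf t})}(\gamma)$ and $S^{\ke_2}_{\bf t}(\gamma)$ already agree modulo $1/z^2$, the uniqueness half of the characterization forces the equality to all orders.

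The main obstacle will be establishing the characterization lemma under the actual hypothesis of Theorem \ref{Main}, which only requires the $\T$-fixed points to be isolated; the one-dimensional $\T$-orbits in $\WmodG$ may form positive-dimensional families (e.g.\ for Nakajima quiver varieties, as the excerpt notes). In Givental's classical setting both are discrete and edges contribute by simple residues, but here one must allow continuous edge strata and verify that integrating along them preserves both the prescribed pole pattern and the $\ke$-independence. A secondary difficulty is extending everything to the $E$-twisted theories for convex $\G$-representations: one has to carry the twist through the vertex and edge contributions and check that edges, being unobstructed single $\PP^1$-covers, still receive a twist class that is manifestly $\ke$-independent.
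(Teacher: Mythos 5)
Your overall strategy (Givental-style localization recursion with $\ke$-independent edge factors, followed by a uniqueness statement) is indeed the mechanism the paper uses, and you correctly spot both the crucial observation (unbroken/edge contributions see only stable maps and hence do not depend on $\ke$) and the two hard points (positive-dimensional unbroken loci, and carrying the twist through). But as written there are two genuine gaps.

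First, your ``characterization lemma'' as stated consists only of a pole structure and a residue recursion, and you then claim that this plus the $1/z^2$-agreement forces equality. That implication is false. The recursion controls residues at the nonzero poles $z=\chi/d$ but says nothing about the principal part at $z=0$: two systems can satisfy the identical recursion with identical recursion coefficients, agree modulo $1/z^2$, and still differ by something of the form $a/z^2+b/z^3+\cdots$ in their initial terms $R_\mu$. What rules this out in the paper is a \emph{third} input, the polynomiality property of Lemma \ref{local poly} (the convolution $S_\mu(q,z)S_\mu(qe^{-zyL_\theta},-z)$ has no pole at $z=0$), extracted by $\CC^*$-localization on the graph space with the universal line bundle $U(L_\theta)$ inserted, not from the series $P^\ke$. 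The paper's Uniqueness Lemma \ref{uniqueness lemma} performs a double induction on bi-degree $(\sum k_i,\beta(L_\theta))$ and it is exactly the interplay of recursion (making the difference a Laurent polynomial in $1/z$), polynomiality (killing its leading coefficient), and mod-$1/z^2$ agreement (finishing off the remaining $1/z$ and $z^0$ terms) that closes the induction. Your outline simply omits the polynomiality axiom, which breaks the uniqueness step.

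Second, the detour through the ruled cone $\cL^\ke$ is a structural mismatch with the statement you want. The paper deliberately separates the $S$-operator wall-crossing (Theorem \ref{equiv Thm1}, valid with only isolated $\T$-fixed points) from the Lagrangian-cone statement (Theorem \ref{equiv Thm2}, which additionally requires isolated $1$-dimensional orbits). The reason is that the full cone parametrization via $J^\ke=S^\ke(P^\ke)$ feeds $P^\ke$—a power series in $z$—into the $S$-operator, and controlling its pole structure in the presence of positive-dimensional edge families is delicate; the paper only carries this out under the stronger orbit hypothesis. For Conjecture \ref{conj S} you should instead compare the two $S$-series directly at each fixed point $\mu$, as in Lemmas \ref{poles}, \ref{recursion lemma}, \ref{local poly}: the objects entering the localization are then honest $1/z$-series, and the recursion of Lemma \ref{recursion lemma} is set up so that the positive-dimensional unbroken component $M'$ enters only through a $\T$-equivariant integral involving nilpotent non-equivariant classes $\psi_0,\psi_\infty$, giving finitely many partial fractions per $(q,{\bf t})$-coefficient. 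This is precisely how the paper gets the result under the weak hypothesis you are worried about, without ever invoking a cone.
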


For example, let us take $\ke_1=\ke$ with $0+\leq\ke\leq 1$, $\ke_2=\infty$, and $\gamma=\one$. Then, by the string equation in Gromov-Witten theory, $\tau^\infty_\one$ is the identity transformation and 
$S^{\infty}_{\bf t}(z)(\one)=J^{\infty}({\bf t},z)$.
Hence
\begin{equation}\label{mirror map}
\tau^{\ke,\infty}_\one = (\tau^{\ke}_\one)^{-1}
\end{equation}
and we obtain the following special case of Conjecture \ref{conj S}:
\begin{Conj}\label{mirror J1} Let $J^\infty({\bf t},z)$ be the Gromov-Witten theory (big) $J$-function of $\WmodG$, let $\ke\geq 0+$ be a stability parameter, and let
\begin{equation}\label{string transf}\tau^{\ke} ({\bf t})=\sum_i\gamma_i \lla \gamma^i,\one\rra^\ke_{0,2}-\one={\bf t}+\sum_i\gamma_i\sum_{\beta\neq 0,m\geq 0}
\frac{q^\beta}{m!}\lan\gamma^i,\one,{\bf t},\dots ,{\bf t}\ran^\ke_{0,2+m,\beta} .\end{equation}
Then
$$J^\infty(\tau^{\ke}({\bf t}),z)=S^\ke_{{\bf t}}(z)(\one).
$$
\end{Conj}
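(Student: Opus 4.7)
The plan is to prove this conjecture under the torus-action hypothesis of Theorem \ref{Main}. Assume $\WmodG$ carries an action by a torus $\T$ commuting with $\G$ whose induced action on $\WmodG$ has only isolated fixed points $\{p_\alpha\}$. The projective case will follow from the equivariant one by specialization, and the twisted case (convex $E$) by incorporating the equivariant Euler class of the twisting bundle into the localization contributions, so I focus on the untwisted equivariant setting.

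The first step is a reformulation as cone membership: $S^\ke_{\bf t}(z)(\one)$ should lie on Givental's Lagrangian cone $\cL ag_{\WmodG}$ of the Gromov-Witten theory of $\WmodG$. Granting this, Conjecture \ref{mirror J1} follows from the ruling property of $\cL ag_{\WmodG}$: both $S^\ke_{\bf t}(\one)$ and $J^\infty(\tau^\ke({\bf t}), z)$ are pure $1/z$-series of the form $\one + \tau^\ke({\bf t})/z + O(1/z^2)$, the leading terms of the former being built into the definition \eqref{string transf} of $\tau^\ke$, and any such series lying on $\cL ag_{\WmodG}$ is uniquely determined by its $1/z$-coefficient. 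Moreover, by Theorem \ref{Birkhoff Thm} together with the unitarity of $S^\ke$ (Proposition \ref{Unitary}), cone membership of $S^\ke_{\bf t}(\one)$ is equivalent to cone membership of the full $J^\ke({\bf t}, z)$.

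I would then verify cone membership of $J^\ke$ via a Givental-style recursive characterization: a series $f({\bf t}, z)$ lies on $\cL ag_{\WmodG}$ iff, for each fixed point $p_\alpha$, the restriction $f|_{p_\alpha}(z)$ has poles in $z$ only at specified locations $\chi_{\alpha\beta}/e$ (with $\chi_{\alpha\beta}$ the $\T$-weight of a $1$-dimensional orbit joining $p_\alpha$ to $p_\beta$ and $e \in \ZZ_{>0}$), and the residue at each such pole is prescribed by a universal recursion in terms of $f|_{p_\beta}$. I would check this characterization for $J^\ke$ by $\T$-localizing the graph-space integrals defining $J^\ke$ in \eqref{Je}. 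Using the decomposition of the fixed loci $F_{k_2,\beta_2}^{k_1,\beta_1}$ of \S\ref{fixed loci}, each localized contribution splits into a vertex factor (coming from unparametrized quasimap moduli) and an edge factor (coming from a rigid configuration over $\PP^1$ connecting $p_\alpha$ and $p_\beta$).

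The central observation, and the main obstacle, is that the edge factors are $\ke$-\emph{independent}. For $\ke \leq 1/e$ the rigid edge contribution at the pole $z = \chi_{\alpha\beta}/e$ comes from a length-$e$ base-point configuration on the parametrized $\PP^1$, while for $\ke > 1/e$ it comes from a degree-$e$ rational tail, but in either chamber the equivariant residue works out to the same universal factor, computable purely from the $\T$-representation on $\WmodG$ near the endpoints. This matches the recursion known to characterize $\cL ag_{\WmodG}$ in the $\ke = \infty$ case, establishing cone membership uniformly in $\ke \geq 0+$ and completing the proof.
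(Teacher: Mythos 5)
Your proposal diverges from the paper's actual argument in ways that create real gaps. The most serious is the hypothesis: by routing the proof through Lagrangian cone membership for $J^\ke$ and a pole-and-residue recursion with poles at $\chi_{\alpha\beta}/e$, you are implicitly assuming the $1$-dimensional $\T$-orbits in $\WmodG$ are isolated. That is exactly the extra hypothesis the paper needs for Theorem~\ref{equiv Thm2}, but Conjecture~\ref{mirror J1} is proved under only the weaker assumption of isolated \emph{fixed points} (Theorem~\ref{equiv Thm1} and Corollary~\ref{nonequiv limit}). The paper's route is a direct comparison of the two $S$-operator series via a Uniqueness Lemma (Lemma~\ref{uniqueness lemma}), whose recursion (Lemma~\ref{recursion lemma}) allows the ``edge'' factors to be positive-dimensional unbroken components of $\overline{M}_{0,2}(\WmodG,\beta')^{\T}$ and the poles to be of arbitrary order; your characterization of $\cL ag_{\WmodG}$ by simple poles and residues simply does not apply in that generality.

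Two further problems. First, your stated ``iff'' characterization of $\cL ag_{\WmodG}$ by pole location and residue recursion alone is incomplete: the recursion and pole conditions leave the pole-free (pure $1/z$-series) part of the initial terms entirely unconstrained, so they cannot by themselves characterize the cone nor pin down a $\ke$-dependent series. What closes this gap in the paper is the polynomiality/convolution condition (Lemma~\ref{local poly}, established via $\CC^*$-localization on graph spaces using the universal line bundle $U(L_\theta)$); you omit it entirely. Second, your explanation of the $\ke$-independence of the edge factors --- length-$e$ base points on the parametrized $\PP^1$ versus degree-$e$ rational tails --- conflates the $\CC^*$-fixed loci on graph spaces with the $\T$-fixed loci on quasimap moduli. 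In the paper the recursion comes from $\T$-localizing the \emph{quasimap} spaces $\mathrm{Q}^\ke_{0,2+m}(\WmodG,\beta)$, not the graph spaces, and the $\ke$-independence of the edge factors holds because they are always unbroken components of $\T$-fixed loci of two-pointed \emph{stable-map} moduli $\overline{M}_{0,2}(\WmodG,\beta')^{\T}$, which carry no base points whatsoever and therefore cannot see $\ke$. Your proposed mechanism, though reminiscent of Givental's original toric argument, is not what makes the present proof work.
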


Note that if the $\ke$-quasimap invariants would satisfy the usual string equation, then $\tau^{\ke} ({\bf t})={\bf t}$. 
For this reason we will refer to \eqref{string transf} as {\it the string transformation}. By the same token, \eqref{gen string} will be called a
{\it generalized string transformation}.

\subsection{Semi-positive targets and the mirror map} \label{semi-positive2} 
We specialize further to the semi-positive case. Using the results of \S\ref{semi-positive}, namely 
Corollary \ref{nef big}, together with  Corollary \ref{whatever} and the discussion immediately following it, we restate Conjecture \ref{mirror J1} as follows:

\begin{Conj}\label{mirror nef} Assume that $(W,\G,\theta)$ is semi-positive. Then for every $\ke\geq 0+$
\begin{equation}\label{equation mirror nef}J^\infty(\tau^{\ke} ({\bf t}),z)=\frac{J^\ke({\bf t},z)}{J_0^\ke(q)},
\end{equation}
with 
$$\tau^{\ke} ({\bf t})=\frac{{\bf t}+J^\ke_1(q)}{J_0^\ke(q)}.$$ 
In particular, 

$(i)$  $J^\ke$ is on the overruled Lagrangian cone $\cL ag_{\WmodG}$ encoding the 
genus zero Gromov-Witten theory of $\WmodG$. (See for example \cite{CG} for Givental's symplectic
space formalism in Gromov-Witten theory and the resulting Lagrangian cone.)

$(ii)$ If the Fano index of $(W,\G,\theta)$ is at least $2$, then $J^\ke$ does not depend on $\ke\geq 0+$.
\end{Conj}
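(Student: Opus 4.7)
The plan is to derive Conjecture \ref{mirror nef} as a purely formal consequence of Conjecture \ref{mirror J1} combined with the semi-positivity results already established in Section \ref{semi-positive}. First I would apply Conjecture \ref{mirror J1} to write $J^\infty(\tau^{\ke}({\bf t}),z) = S^\ke_{\bf t}(z)(\one)$; then by Corollary \ref{nef big}, which is the crucial semi-positive input, the right-hand side equals $J^\ke({\bf t},z)/J^\ke_0(q)$, giving the main identity \eqref{equation mirror nef}. The explicit form $\tau^\ke({\bf t}) = ({\bf t} + J^\ke_1(q))/J^\ke_0(q)$ then follows by combining the definition \eqref{string transf} of $\tau^\ke$ with the identity \eqref{mirror 1} from Corollary \ref{nef big}: both express $\sum_i\gamma_i\lla\gamma^i,\one\rra^\ke_{0,2}-\one$, in the former case as $\tau^\ke({\bf t})$ itself, in the latter as $({\bf t}+J^\ke_1(q))/J^\ke_0(q)$, so the two are equal.

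For part $(i)$ I would invoke Givental's symplectic space formalism. The overruled Lagrangian cone $\cL ag_{\WmodG}$ is by construction the image of the big $J$-function $J^\infty(\cdot, z)$ together with its rulings, and it is stable under multiplication by any invertible scalar from $\Lambda$ (respectively from the equivariant coefficient ring in the quasiprojective case), since it is a cone in the literal sense. Because $J^\ke_0(q) = 1 + O(q)$ is such a unit by \eqref{J0}, the identity \eqref{equation mirror nef} yields $J^\ke({\bf t},z) = J^\ke_0(q) \cdot J^\infty(\tau^\ke({\bf t}),z) \in \cL ag_{\WmodG}$. For part $(ii)$, when the Fano index is at least $2$, the dimension-counting argument following \eqref{J1} forces $J^\ke_0(q) = 1$ and $J^\ke_1(q) = 0$; therefore $\tau^\ke({\bf t}) = {\bf t}$ and \eqref{equation mirror nef} degenerates into $J^\ke({\bf t},z) = J^\infty({\bf t},z)$, proving $\ke$-independence.

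The substantive input is Conjecture \ref{mirror J1}, whose proof in cases with a torus action having isolated fixed points is provided by Theorem \ref{Main}; everything else is bookkeeping via the dimension estimates of Section \ref{semi-positive} that constrain the $1/z$-expansion of $J^\ke|_{{\bf t}=0}$ to the two coefficients $J^\ke_0$ and $J^\ke_1$. The main obstacle is therefore already absorbed upstream: once the general wall-crossing identity $S^\ke_{\bf t}(z)(\one) = J^\infty(\tau^\ke({\bf t}),z)$ is in hand, the semi-positive statement is essentially tautological. One minor point to verify is the compatibility of Givental's cone description with the enlarged Novikov/equivariant coefficient ring used here, but this is routine since the defining relations for $\cL ag_{\WmodG}$ are linear over the ground ring.
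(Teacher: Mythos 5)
Your derivation matches the paper's own presentation: the paper explicitly introduces Conjecture \ref{mirror nef} as a restatement of Conjecture \ref{mirror J1} obtained by substituting the semi-positive identities of Corollary \ref{nef big} (the equality $S^\ke_{\bf t}(z)(\one)=J^\ke/J^\ke_0$ and \eqref{mirror 1}), with parts $(i)$ and $(ii)$ following from scalar-invariance of the cone and the Fano-index vanishing noted after \eqref{J1}. Your write-up spells out these same steps with slightly more care (in particular the unit $J^\ke_0=1+O(q)$ for part $(i)$ and the forced vanishing $J^\ke_0=1$, $J^\ke_1=0$ for part $(ii)$), so the approach is correct and identical in substance.
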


\begin{Rmk}\label{main remark} We explain here why the above statement generalizes Givental's toric Mirror Theorems in \cite{Givental}. 

Consider the (most interesting) case when $\ke=0+$, so that the right hand side of \ref{equation mirror nef} is given by the $I$-function of $\WmodG$.
We then set ${\bf t}=0$ and use  the string and divisor equations for Gromov-Witten invariants in the left-hand side (recall from \S\ref{semi-positive} that $J^\ke_1(q)\in H^{\leq 2}(\WmodG,\Lambda)$). When $\WmodG$ is either a semi-positive toric variety, or a semi-positive complete intersection in a toric variety, or a local toric target, the resulting
equality coincides precisely with Givental's toric Mirror Theorems 
after multiplying by an overall factor $e^{{\bf t}_{\mathrm{small}}/z}$, where 
${\bf t}_{\mathrm{small}}$ is the general element of $H^{\leq 2}(\WmodG,\QQ)$.

From this we see that the classical mirror map is 
\begin{equation}\label{classical}
{\bf t}_{\mathrm{small}}\mapsto {\bf t}_{\mathrm{small}}+ \frac{I_1(q)}{I_0(q)}={\bf t}_{\mathrm{small}}+\sum_i\gamma_i\sum_{\beta\neq 0}
q^\beta\lan\gamma^i,\one\ran^{0+}_{0,2,\beta}
\end{equation}
and in particular its ``quantum corrections" have an interpretation as two-pointed quasimap invariants (the second equality is given by Corollary \ref{nef small}).

Hence Conjecture \ref{mirror J1} , and the
Theorems proved in \S\ref{equivariant case} below, can be viewed as a significant generalization of Givental's results: 
\begin{itemize} 
\item from complete intersections
in toric varieties or local toric targets to a much larger class of GIT targets;
\item from semi-positive targets to targets with no restriction on the anti-canonical class;
\item from ${\bf t}=0$ to general parameter ${\bf t}$, i.e., from $2$-point invariants to invariants with any number of insertions;
\item from $(\ke=0+)$-stability to all stability parameters $\ke$.
\end{itemize}
Furthermore, the string transformation $\tau^{\ke} ({\bf t})$ generalizes the 
celebrated ``mirror map", while at the same time giving it a geometric realization as a generating series of
quasimap invariants.
\end{Rmk}

\begin{Rmk} In \cite{Jin}, M. Jinjenzi proposed a conjecture for writing the mirror map via two-pointed correlators in the toric cases which is similar to the second equality in \eqref{classical}, though it is not
clear that the moduli spaces he is using coincide with the moduli of quasimaps as constructed in \cite{CK}. 
Hence Corollary \ref{nef small} proves a generalization of his conjecture to all GIT targets. 

\end{Rmk}

\subsection{Higher genus}\label{higher genus}
 We plan to study wall-crossing formulas under change of the stability parameter $\ke$ for higher genus quasimap invariants in subsequent work. 
Here we only formulate a conjecture in the case when the anti-canonical class is sufficiently positive.
\begin{Conj}
If the Fano index of $(W,\G,\theta)$ is least 2, then the descendant quasimap invariants 
$$\langle \sigma_1\psi_1^{a_1},\dots,\sigma_k\psi_k^{a_k}
\rangle^\ke_{g,k,\beta}$$
are independent on $\ke$.
\end{Conj}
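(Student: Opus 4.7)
The plan is to reduce to wall-crossing across a single wall $\ke_0=1/d$ and then apply $\CC^*$-equivariant virtual localization to the higher-genus quasimap graph space $\QGge$, extending the graph-space techniques of Sections 4--5 to arbitrary genus. Fix $(g,k,\beta)$: only finitely many walls $1,\tfrac12,\dots,\tfrac{1}{\beta(L_\theta)}$ separate the stability chambers for the class $\beta$, so it is enough to show that the descendant invariants do not change when $\ke$ crosses a single wall. Let $\ke_\pm$ denote parameters just above and below $\ke_0$.

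Mimicking the construction of Proposition \ref{poly lemma} but in arbitrary genus, I would form the generating series
\begin{equation*}
D^\ke:=\sum_{m,\beta}\frac{q^\beta}{m!}\int_{[\QGge]^{\mathrm{vir}}}e^{c_1^{\CC^*}(U(L_\theta))y}\prod_{i=1}^k ev_i^*(\gamma_i)\psi_i^{a_i}\prod_{j=1}^m ev_{k+j}^*({\bf t}),
\end{equation*}
where $U(L_\theta)$ is the universal line bundle of Section 3. The restriction formula \eqref{Urestriction} extends without change to higher genus, so the same factorization argument as in Proposition \ref{poly lemma} shows that the coefficient of each $q^\beta$ in $D^\ke$ is polynomial in the $\CC^*$-equivariant parameter $z$ of bounded degree. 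Virtual localization then expresses $D^\ke$ as a sum over fixed loci of the form \eqref{simple loci}, which factor into ``contributions at $0$'' and ``contributions at $\infty$''.

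The moduli $\mathrm{Q}^{\ke_+}_{g',k'+\bullet,\beta'}$ and $\mathrm{Q}^{\ke_-}_{g',k'+\bullet,\beta'}$ differ precisely by the presence or absence of base points of length $d$ (equivalently, contracted rational tails of $L_\theta$-degree $d$), so $D^{\ke_+}-D^{\ke_-}$ is supported on fixed loci of the form $\mathrm{Q}^{\ke}_{g',k'+\bullet,\beta'}\times_{\WmodG}\mathrm{Q}_{0,0+\bullet}(\WmodG,\beta'')_0$ with $\beta''(L_\theta)=d$. The Fano-index hypothesis enters through these genus-zero factors: each $\mathrm{Q}_{0,0+\bullet}(\WmodG,\beta'')_0$ has virtual dimension exceeding $\dim\WmodG$ by $\beta''(\det T_W)\geq 2$, so the associated Euler class $\mathrm{cont}_{(0,\beta'')}(z)$ of \eqref{cont k0}, combined with the evaluation at $\bullet$ and the polynomiality of $D^\ke$, forces the correction to vanish by a degree comparison in $z$. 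Equating the surviving extremal contributions, in which the entire data $(g,k,\beta)$ is concentrated at $0$ on the parametrized $\PP^1$, then yields the desired $\ke$-independence, since (as noted after \eqref{cont k>0}) the $\CC^*$-fixed part of the graph-space obstruction theory recovers the intrinsic one on $\QmapWe$.

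The main obstacle is carrying out the higher-genus $\CC^*$-fixed-locus analysis carefully: in arbitrary genus, bubbles can attach at many more combinatorial configurations on the base curve than in genus zero, and one must also handle the unstable cases $(g',k',\beta')=(0,0,\beta'')$ and $(0,1,\beta'')$ uniformly as in Section 4.1. Equally nontrivial is the dimension count showing that \emph{every} correction term (not just the top-dimensional one) vanishes under the Fano hypothesis, since lower-dimensional correction strata can have excess normal bundle contributions. I expect this to generalize the analysis of \cite{CKtoric2} for projective toric Fanos, with the genus-zero statement of Theorem \ref{Main} supplying, via an induction on the number of bubbles, the base case for controlling the genus-zero factors that appear at the nodes.
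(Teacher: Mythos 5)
The statement you are proving is Conjecture 5.3.1 in the paper, and the paper gives \emph{no proof} of it: the authors explicitly say ``We plan to study wall-crossing formulas \dots for higher genus quasimap invariants in subsequent work. Here we only formulate a conjecture,'' and then cite \cite{MOP}, \cite{Toda}, \cite{CKtoric2} for the known special cases. So there is no paper argument to compare against; one must instead assess whether your plan actually closes the gap, and it does not.

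The central difficulty is that the genus-zero wall-crossing of Theorems \ref{equiv Thm1} and \ref{equiv Thm2} does \emph{not} rest on polynomiality alone: it rests on the conjunction of Lemma \ref{poles} (pole structure), Lemma \ref{recursion lemma} (recursion over $\T$-fixed loci), Lemma \ref{local poly} (polynomiality), and, decisively, the Uniqueness Lemma \ref{uniqueness lemma}, which says that pole-structure + recursion + polynomiality + agreement mod $1/z^2$ + initial condition pin the series down. Every one of these ingredients is genus-zero specific: the recursion depends on the chain structure of unbroken genus-zero quasimaps, and the Uniqueness Lemma's induction on bi-degree exploits the fact that the unknowns all sit in a single $S$-operator. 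Your proposal imports only the polynomiality half of this machinery into genus $g$ and then asserts that the correction ``must vanish by a degree comparison in $z$.'' Polynomiality of $D^{\ke}$ is a genuine constraint, but in higher genus the localization residues form a convolution over splittings $g=g_1+g_2$ involving both the unknown genus-$g$ data and all lower-genus data, and a single polynomiality relation per $\beta$ is nowhere near enough to force $\ke$-independence; no analogue of Lemma \ref{uniqueness lemma} is supplied.

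There is also a concrete factual error in the localization step. You assert that $D^{\ke_+}-D^{\ke_-}$ is supported on fixed loci of the form $\mathrm{Q}^{\ke}_{g',k'+\bullet,\beta'}\times_{\WmodG}\mathrm{Q}_{0,0+\bullet}(\WmodG,\beta'')_0$ with $\beta''(L_\theta)=d$. This is not true: crossing the wall $\ke_0=1/d$ changes the moduli spaces $\mathrm{Q}^{\ke_\pm}_{g',k'+\bullet,\beta'}$ \emph{themselves} (rational tails of total $L_\theta$-degree $\leq d$ get traded for base points of length $\leq d$ throughout all strata), not merely the extremal ``unstable'' factors in the fixed-locus description \eqref{simple loci}. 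Consequently the difference does not localize to the simple class of strata you describe, and the Fano-index dimension count you carry out on the single factor $\mathrm{Q}_{0,0+\bullet}(\WmodG,\beta'')_0$ — which indeed shows $J^\ke=\one+{\bf t}/z+O(1/z^2)$ for all $\ke$ as noted after \eqref{J0+} — does not by itself control the full wall-crossing of the other factors. Proving this Conjecture will require a genuinely higher-genus comparison of virtual classes under the contraction maps $c_{\ke_1,\ke_2}$ of \S\ref{rat-tails} (as in \cite{Toda} and \cite{CKtoric2}), not a direct extension of the genus-zero graph-space uniqueness argument.
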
 

The Conjecture is inspired by the following cases which are known to hold 
\begin{itemize}
\item When $\WmodG$ is a Grassmannian $G(r,n)$ \cite{MOP} proves it for $\ke\in\{0+,\infty\}$ and \cite{Toda} extends it to arbitrary $\ke$. 
\item When $\WmodG$ is a projective Fano toric variety (with no restriction on the Fano index), the Conjecture is proved in \cite{CKtoric2}.
\end{itemize}

\subsection{Transformations for the big $J^\ke$-functions in the general case}\label{big J wall-crossing} In this subsection we generalize Conjecture \ref{mirror nef} 
to targets with no positivity restriction on the triple $(W,\G,\theta)$. 

\begin{Lemma}\label{algebraic transformation} 
For every $\ke\geq 0+$ there exist a uniquely determined element $P^{\infty,\ke}({\bf t},z)\in H^*(\WmodG,\Lambda)[[z]]$, convergent in the
$q$-adic topology for each ${\bf t}$, and a uniquely
determined transformation ${\bf t}\mapsto\tau^{\infty,\ke}({\bf t})$ on
$H^*(\WmodG,\Lambda)$ with the following properties:
\begin{enumerate}
\item $\tau^{\infty,\ke}({\bf t})={\bf t}+ O(q).$
\item $P^{\infty, \ke}({\bf t},z)=\one +O(q).$
\item $S^\ke_{\bf t}(z)(P^\ke({\bf t}, z))=S^\infty_{\tau^{\infty,\ke}({\bf t})}(z)(P^{\infty, \ke}(\tau^{\infty,\ke}({\bf t}),z))\;\; ({\mathrm{mod}}\; \frac{1}{z^2})$.
\end{enumerate}

\end{Lemma}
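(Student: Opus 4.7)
The plan is to invoke Theorem~\ref{Birkhoff Thm} to rewrite the left hand side as $J^\ke({\bf t},z)$, reducing the desired identity to
\[
J^\ke({\bf t},z)\;\equiv\;S^\infty_{\tau({\bf t})}(z)\bigl(P^{\infty,\ke}(\tau({\bf t}),z)\bigr)\pmod{1/z^2},\qquad \tau:=\tau^{\infty,\ke},
\]
and then to solve this identity iteratively in the $q$-adic filtration on $\Lambda$. Two features of the set-up make such an iteration possible. First, at each order $q^\beta$ the coefficient of $J^\ke$ is a polynomial in $z$ plus a $1/z$-term, since it is assembled from $\CC^*$-equivariant graph-space pushforwards whose $z$-degree is bounded by the virtual dimension, and I will construct $P^{\infty,\ke}$ to share this bounded-$z$-degree property. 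Second, at $q=0$ one has $S^\ke_{\bf t}(z)|_{q=0}=S^\infty_{\bf t}(z)|_{q=0}$ (both reduce to the same integrals over $\oM_{0,2+m}\times\WmodG$, independent of $\ke$) and $P^\ke|_{q=0}=\one$, so the base of the iteration is $\tau({\bf t})={\bf t}$ and $P^{\infty,\ke}=\one$.

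Suppose inductively that $\tau$ and $Q:=P^{\infty,\ke}$ have been determined modulo some $q$-order $N$. Setting $\tilde Q({\bf t},z):=Q(\tau({\bf t}),z)=\sum_{k\ge 0}\tilde Q_k({\bf t})z^k$ and expanding the operator $S^\infty_{\tau({\bf t})}(z)$ as $\mathrm{Id}+z^{-1}\tau^\infty_{(\cdot)}(\tau)+O(1/z^2)$, the $z^m$-coefficient of the right hand side is
\[
\tilde Q_m+\tau^\infty_{\tilde Q_{m+1}}(\tau)+\text{(contributions from }\tilde Q_{m+2},\tilde Q_{m+3},\dots\text{ via higher powers of }1/z)
\]
for $m\ge 0$, and an analogous expression at $m=-1$. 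At the new order $N+1$, only finitely many $m\in\{-1,0,\ldots,M(\beta)\}$ contribute ($M(\beta)$ being the top $z$-degree of the $q^\beta$-piece of $J^\ke$), and the resulting system is triangular: matching the $z^m$-coefficient for $m=M(\beta),M(\beta)-1,\ldots,0$ in succession determines each $\tilde Q_m|_{N+1}$ from $J^\ke$ and earlier data, and finally the $z^{-1}$-equation determines $\tau|_{N+1}$ linearly with leading coefficient the identity, thanks to the Gromov--Witten string equation $\tau^\infty_\one({\bf s})={\bf s}$ applied to $\tilde Q_0|_{q=0}=\one$. Uniqueness falls out of the unique solvability at each step.

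The main delicate point will be to verify that the triangular structure is genuine, i.e., that the fresh contribution of $\tau|_{N+1}$ to the $z^m$-equations for $m\ge 0$ involves only data from orders $\le N$. This should follow from the observation that $\tau$ enters the $z^m$-equations for $m\ge 0$ only through the composition $Q(\tau({\bf t}),z)$ paired with $\tilde Q_k$-coefficients with $k\ge 1$, all of which are $O(q)$ by the normalization $P^{\infty,\ke}=\one+O(q)$; thus any appearance of $\tau|_{N+1}$ in such a coupling would be of strictly higher $q$-order than $N+1$, and can be ignored. Combined with the fact that $\tau|_{N+1}$ enters the $z^{-1}$-equation with an invertible (identity) leading coefficient, this establishes both existence and uniqueness at the new order, completing the induction.
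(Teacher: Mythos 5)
Your proof is correct and supplies essentially the inductive argument the paper deliberately omits (the authors state only that $P^{\infty,\ke}$ and $\tau^{\infty,\ke}$ can be constructed by an inductive procedure in the degree $d=\beta(L_\theta)$). The iteration in the $q$-adic filtration, the triangular solution of the $z^m$-equations for $m\ge 0$ to fix $P^{\infty,\ke}$, and the use of the string equation $\tau^\infty_\one({\bf s})={\bf s}$ to isolate $\tau^{\infty,\ke}$ from the $z^{-1}$-equation with identity leading coefficient, are precisely the ingredients of that procedure.
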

We omit the proof of the Lemma, which is an elementary but tedious check 
that $P^{\infty, \ke}$ and $\tau^{\infty,\ke}$ with the required properties can be constructed in an unique way and at the same time by an inductive procedure in the ``degree''
$d=\beta(L_\theta)$.

By Theorem \ref{Birkhoff Thm} we have $J^\ke({\bf t},z)=S^\ke_{\bf t}(z)(P^\ke({\bf t}, z))$. On the other hand, if we write 
$$P^{\infty,\ke }({\bf t},z)=\sum_i C_i(q,\{t_j\}, z)\gamma_i$$
with ``scalar" coefficients $C_i(q,\{t_j\}, z)\in\Lambda[[\{t_j\},z]]$, then
$$S^\infty_{\bf t}(z)(P^{\infty, \ke}({\bf t},z))=\sum_i C_i z\partial_{t_i}J^\infty({\bf t},z),$$
and hence it is on the Lagrangian cone $\cL ag_{\WmodG}$.

\begin{Conj}\label{big J} For all $\ke\geq 0+$, 
$$J^\ke({\bf t},z)=S^\infty_{\tau^{\infty,\ke}({\bf t})}(z)(P^{\infty,\ke}(\tau^{\infty,\ke}({\bf t}),z)).$$
In particular, the function $J^\ke$ is on the Lagrangian cone of the Gromov-Witten theory of $\WmodG$.
\end{Conj}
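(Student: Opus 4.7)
The plan is to reduce Conjecture \ref{big J} to a $\T$-equivariant statement by imposing a torus action on $W$ that commutes with $\G$ and has \emph{both} isolated fixed points and isolated one-dimensional orbits on $\WmodG$ (the setting of the announced Theorem \ref{equiv Thm2}). Under this hypothesis, Givental's Lagrangian cone $\cL ag_{\WmodG}$ admits a local recursive characterization: a series $f(z)$ lies on $\cL ag$ precisely when, for each $\T$-fixed point $p\in\WmodG$, the restriction $f|_p$ is regular in $z$ away from a discrete set $z=\chi/d$ (where $\chi$ is the character of a one-dimensional $\T$-orbit through $p$ and $d\in\ZZ_{>0}$) and the residue at each such pole is determined recursively, via a universal ``edge factor'', from the value of $f$ at the other endpoint of the orbit. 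Since $S^\infty_{\tau^{\infty,\ke}({\bf t})}(P^{\infty,\ke}(\tau^{\infty,\ke}({\bf t}),z))$ manifestly lies on $\cL ag$, and by Theorem \ref{Birkhoff Thm} we have $J^\ke=S^\ke_{\bf t}(P^\ke)$, it suffices to prove that $J^\ke$ itself satisfies this recursion; the uniqueness clause of Lemma \ref{algebraic transformation} then identifies the resulting data with $\tau^{\infty,\ke}$ and $P^{\infty,\ke}$.

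The first step is to apply $\T$-equivariant virtual localization to the defining graph-space integrals for $J^\ke$. Composing the $\T$-action with the standard $\CC^*$-action on the parametrized $\PP^1$, the fixed loci in $QG^\ke_{0,k,\beta}(\WmodG)$ decompose according to decorated graphs whose vertices carry a torus-fixed point of $\WmodG$ together with a \emph{vertex moduli} of $\T$-fixed $\ke$-stable quasimaps concentrated at that point (including the exceptional base-point fixed loci $\mathrm{Q}_{0,0+\bullet}(\WmodG,\beta)_{0/\infty}$ from \S\ref{fixed loci}), while edges correspond to rational components of the domain mapped as multiple covers of a one-dimensional $\T$-orbit of $\WmodG$. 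Summing over such graphs decomposes the contribution into a product of vertex factors and edge factors, with the poles of $J^\ke|_p$ in $z$ arising precisely from edges emanating from $p$.

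The decisive step is to observe that the edge contributions are $\ke$-\emph{independent}: an edge is modeled on a parametrized $\PP^1$ together with a quasimap that is regular except possibly at a node at one endpoint, and for a fixed one-dimensional orbit the edge factor depends only on the weight $\chi$ and the degree $d$ of the cover. This is the same structural observation that drives the proof of Theorem \ref{Main}, and it implies that the residue recursion satisfied by $J^\ke|_p$ at its poles coincides, up to re-labelling by the change of variables $\tau^{\infty,\ke}$, with the recursion satisfied by any point on $\cL ag_{\WmodG}$. Combined with Lemma \ref{algebraic transformation}, this places $J^\ke$ at the distinguished point $S^\infty_{\tau^{\infty,\ke}({\bf t})}(P^{\infty,\ke})$ on the cone.

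The main obstacle will be the graph-theoretic bookkeeping, specifically the correct attribution of the base-point fixed loci $\mathrm{Q}_{0,0+\bullet}(\WmodG,\beta)_{0/\infty}$ to vertex rather than edge contributions, since these loci are new features of quasimap theory with no analogue in Gromov-Witten theory and their $\ke$-dependence must be entirely absorbed into the vertex moduli. Once this separation is made, the pole structure of $J^\ke|_p$ matches that of $J^\infty|_p$ up to a reparametrization, which is exactly what the recursion characterization of the cone requires. A secondary technical point is verifying the uniqueness hypothesis of Lemma \ref{algebraic transformation} in the equivariant setting, which should follow by the same inductive degree argument since the polarization $L_\theta$ still stratifies $\mathrm{Eff}(W,\G,\theta)$.
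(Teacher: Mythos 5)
Your proposal correctly identifies the governing setting (equivariant with isolated fixed points \emph{and} isolated $1$-dimensional orbits, i.e.\ Theorem \ref{equiv Thm2}), and your core observation that the recursion/edge data is $\ke$-independent is exactly the leverage the paper uses (Remark \ref{recursion remark}(iii)). But there is a genuine gap: the ``local recursive characterization'' of $\cL ag_{\WmodG}$ that you invoke is not captured by the pole structure plus residue recursion alone. Givental's fixed-point characterization has a third, indispensable ingredient: a \emph{polynomiality} condition, asserting that a certain self-convolution of the fixed-point restriction (the analogue of $D(S_\mu)$ in Lemma \ref{local poly}) has no pole at $z=0$. Without this, the recursion only pins down the polar part of each $q^\beta t^{\underline{k}}$-coefficient and leaves the Laurent-polynomial part in $z$ entirely free, so showing that $J^\ke$ ``satisfies the recursion'' is not sufficient to place it on the cone. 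The paper's proof of Theorem \ref{equiv Thm2} is specifically built around this missing ingredient: it establishes polynomiality for $S^\ke_{\bf t}(P^\ke)$ and $S^\infty_\tau(P^{\infty,\ke})$ via Lemma \ref{local poly} and Remark \ref{modification}, and then applies the Uniqueness Lemma \ref{uniqueness lemma}, which encodes exactly the combination of (i) pole locations, (ii) recursion, (iii) polynomiality, (iv) agreement mod $1/z^2$, and (v) the $\beta=0$ initial condition, all of which are needed for the inductive argument to close.

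A secondary point: your use of Lemma \ref{algebraic transformation} is backwards relative to the paper. That Lemma is a \emph{construction}: it builds the unique $\tau^{\infty,\ke}$ and $P^{\infty,\ke}$ so that $S^\infty_{\tau^{\infty,\ke}}(P^{\infty,\ke})$ matches $J^\ke = S^\ke_{\bf t}(P^\ke)$ modulo $1/z^2$, thereby furnishing condition (iv) of the Uniqueness Lemma. It is not a device for identifying, post hoc, which point of the cone a given series is. The paper then applies the Uniqueness Lemma directly to the pair $S_{1,\mu}=\lan S^\ke_{\bf t}(P^\ke),\delta_\mu\ran$ and $S_{2,\mu}=\lan S^\infty_{\tau^{\infty,\ke}}(P^{\infty,\ke}),\delta_\mu\ran$, establishing the recursion \eqref{isolated recursion} for both via Lemma \ref{recursion lemma} and the pole-removal trick \eqref{pole removal}; crucially this is done on the \emph{unparametrized} moduli $\mathrm{Q}^\ke_{0,2+m}(\WmodG,\beta)$, not by decomposing the graph space $\T\times\CC^*$-fixed loci as you propose, which is a substantially messier route (the base-point loci $\mathrm{Q}_{0,0+\bullet}(\WmodG,\beta)_{0/\infty}$ you worry about are exactly what the Birkhoff factorization $J^\ke=S^\ke_{\bf t}(P^\ke)$ from Theorem \ref{Birkhoff Thm} is designed to absorb into $P^\ke$). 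So: your approach would likely succeed if you (a) add the polynomiality Lemma to the cone characterization and (b) reframe the final step as an application of a uniqueness statement rather than a post-hoc identification; as written, the argument is incomplete.
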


\section{GIT quotients with good torus actions}\label{equivariant case}

In this section we assume that a torus ${\T} \cong (\CC^*)^r$ acts on $W$ and this action commutes with the $\G$-action. Hence there are induced $\T$-actions on
the quotients $[W/\G]$, $W/_{\mathrm {aff}}\G$, and $\WmodG$. Further, we assume that the $\T$-fixed locus $(\WmodG)^{\T}$ is a finite set. The same is then true
for the fixed locus in the affine quotient, which implies in turn that the $\T$-fixed loci in all quasimap moduli spaces are proper. Hence we have well-defined theories of
$\T$-equivariant $\ke$-quasimap invariants, see \S6.3 in \cite{CKM}.
Furthermore, we may (and will) consider {\it twisted} theories, as in \S6.2 of \cite{CKM}.

\subsection{Equivariant generating functions} Let
$$H^*_{\T}(\Spec(\CC),\QQ)=\QQ[\lambda_1,\dots,\lambda_r]$$
be the equivariant cohomology of a point. The equivariant quasimap invariants are defined by the virtual localization formula: 
$$\langle \sigma_1\psi_1^{a_1},\dots,\sigma_k\psi_k^{a_k}
\rangle^\ke_{g,k,\beta}:=\int_{[(\QmapWe)^{\T}]^{\mathrm{vir}}}\frac{\iota^*(\prod_j  ev_j^*(\sigma_j)\psi_j^{a_j})}{\mathrm{e}(N^{\mathrm{vir}})},$$
where $$\iota:(\QmapWe)^{\T}\hookrightarrow\QmapWe$$
is the inclusion of the fixed point locus and $\mathrm{e}(N^{\mathrm{vir}})$ is the $\T$-equivariant Euler class of the virtual normal bundle.
These are well defined by the properness of the fixed locus and take values in the localized equivariant cohomology of a point,
$$K:=\QQ(\lambda_1,\dots,\lambda_r)=H^*_{\T,\mathrm{loc}}(\Spec(\CC),\QQ).$$ 
Similarly, we have the $\T$-equivariant invariants on graph spaces. 

The Novikov ring is now $\Lambda=K[[q]]$
and we have the equivariant objects $J^\ke({\bf t},z)$, $S_{\bf t}^\ke(z)$, $P^\ke({\bf t},z)$ depending on ${\bf t}\in H^*_{\T,\mathrm{loc}}(\WmodG,\Lambda)$ and taking values in
$H^*_{{\T},\mathrm{loc}}(\WmodG,\Lambda)\{\!\{z,z^{-1}\}\!\}$.

If $\WmodG$ is {\it projective} and we take all insertions $\gamma,{\bf t}$, etc. in the non-localized equivariant cohomology $H^*_{\T}(\WmodG,\QQ[[q]])$, then the 
$\T$-equivariant invariants
may be defined without localization
and take values in the ring $\QQ[\lambda_1,\dots,\lambda_r]$. In this case, the generating functions specialize to their non-equivariant counterparts upon setting 
all $\lambda_i=0$.

\subsection{Twisted theories}\label{twisting} 
Twisted quasimap invariants are discussed in \S6.2 of \cite{CKM} and we refer the reader to loc. cit. for the general theory. We recall here the 
specific situations needed for the present paper.
\subsubsection{Zero loci of sections of ``positive" vector bundles and twisting} \label{twisted 1}
Consider the situation described in Example \ref{zero loci}: $W$ is smooth, $\WmodG$ is projective, and we have a vector bundle $\underline{E}$ on 
$\WmodG$ with a regular section $\underline{s}$, induced from a $\G$-representation 
$E$ and a $\G$-invariant section $s$ of $W\times E$. 
We are interested in the genus zero quasimap theory of $Z/\!\!/\G$, where $Z$ is the zero locus of $s$. Note that $(Z,\G,\theta)$ is semi-positive precisely when  
$\beta(\det(T_W))\geq \beta(\det(W\times E))$ for all effective $\beta$.

Let $\WmodG$ admit an action by the torus $\T$ with isolated fixed points.
Except in very special cases, $\T$ will not act on $Z/\!\!/\G$.
Nevertheless, as long as we restrict to insertions pulled-back from the cohomology of $\WmodG$, one can still express its quasimap invariants as $\T$-equivariant integrals on the moduli spaces with target $\WmodG$ by the following construction (in Gromov-Witten theory, this construction is due to Kontsevich).

Assume that  for every quasimap $(C,P,u)$ with $C$ a rational curve we have $H^1(C, P\times_G E)=0$. This happens for example when $W\times E$ is generated by $\G$-invariant global sections over the stable locus (so that $\underline{E}$ on $\WmodG$ is also globally generated), see Proposition 6.2.3 and Remark 6.2.4 in \cite{CKM}. 
Following the terminology in Gromov-Witten theory, we will call such a representation $E$ {\it convex}.

On either the moduli space $\mathrm{Q}^{\ke}_{0,k}(\WmodG,\beta)$, or on the graph space $\QGraphe$, consider the universal curve $\cC$, with projection $\pi$, the
universal principal $\G$-bundle $\fP$ on the universal curve, and the universal section $u:\cC\lra \fP\times_\G W$. We have an induced vector bundle 
$$E^\ke_{0,k,\beta}:=u^*(\fP\times_\G (W\times E))$$
on $\cC$. The $H^1$-vanishing assumption implies that $R^1\pi_*E^\ke_{0,k,\beta}=0$ and $R^0\pi_*E^\ke_{0,k,\beta}$ is a vector bundle
on $\mathrm{Q}^{\ke}_{0,k}(\WmodG,\beta)$, or on $\QGraphe$ ($\T$-equivariant, if we choose a $\T$-linearization
of $W\times E$). Its fiber over a quasimap $(C,P,u)$ is the space of sections $H^0(C, P\times_G E)$.

The $\ke$-quasimap invariants
twisted by $E$ are defined by inserting the $\T$-equivariant Euler class of $R^0\pi_*E^\ke_{0,k,\beta}$ in the integrals:
\begin{equation}\label{twisted}\langle \sigma_1\psi_1^{a_1},\dots,\sigma_k\psi_k^{a_k}
\rangle^{\ke, E}_{0,k,\beta}=\int_{[\mathrm{Q}^{\ke}_{0,k}(\WmodG,\beta)]^{\mathrm{vir}}}{\mathrm{e}}(R^0\pi_*E_{0,k,\beta})\prod_{i=1}^k\psi_i^{a_i}ev_i^*(\sigma_i).
\end{equation}
We define similarly the graph space twisted invariants
\begin{equation}\label{twisted graph}
\langle \sigma_1,\dots,\sigma_k\rangle^{QG^\ke, E}_{0,k,\beta}=\int_{[\QGraphe]^{\mathrm{vir}}} {\mathrm{e}}(R^0\pi_*E_{0,k,\beta})\prod_{i=1}^k \tilde{ev}_i^*(\sigma_i).
\end{equation}

Let $j:Z/\!\!/\G\lra\WmodG$ be the inclusion and let $j_{k,\beta}$ be the induced map on quasimap moduli spaces. By Proposition 6.2.2 of \cite{CKM} 
$$(j_{k,\beta})_*([\mathrm{Q}^{\ke}_{0,k}(Z/\!\!/\G,\beta)]^{\mathrm{vir}})=[\mathrm{Q}^{\ke}_{0,k}(\WmodG,\beta)]^{\mathrm{vir}}\cap{\mathrm{e}}(R^0\pi_*E_{0,k,\beta})
$$
Hence the twisted invariant \eqref{twisted} is
equal to the quasimap invariant
$$\langle (j^*\sigma_1)\psi_1^{a_1},\dots,(j^*\sigma_k)\psi_k^{a_k}
\rangle^{\ke}_{0,k,\beta}$$ of the subvariety $Z/\!\!/\G$. 
The same argument gives the analogous equality between $E$-twisted graph space invariants of $\WmodG$ and graph space invariants of $Z/\!\!/\G$.
Notice that the degree $\beta\in \Hom(\Pic^\G(W),\ZZ)$ is measured in $\WmodG$. In many (but not all) examples this will not make a difference, as we will have $\Pic^\G(Z)=\Pic^\G(W)$. In general there is an induced map $j_*:\Hom(\Pic^\G(Z),\ZZ)\lra\Hom(\Pic^\G(W),\ZZ)$ and 
$$\langle (j^*\sigma_1)\psi_1^{a_1},\dots,(j^*\sigma_k)\psi_k^{a_k}
\rangle^{\ke}_{0,k,\beta} = \sum_{j_*(\beta')=\beta}\langle (j^*\sigma_1)\psi_1^{a_1},\dots,(j^*\sigma_k)\psi_k^{a_k}
\rangle^{\ke}_{0,k,\beta'}.$$

We now form the (equivariant) twisted generating series $\tilde{J}^{\ke, E}({\bf t},z)$, $\tilde{S}_{\bf t}^{\ke, E}(z)(\gamma)$, $\tilde{P}^{\ke, E}({\bf t},z)$ by replacing the quasimap invariants 
by their twisted counterparts. Equivalently, we replace in the formulas defining the series the virtual classes $[\mathrm{Q}^{\ke}_{0,k}(\WmodG,\beta)]^{\mathrm{vir}}$ and
$[\QGraphe]^{\mathrm{vir}}$ by their cap products with ${\mathrm{e}}(R^0\pi_*E_{0,k,\beta})$. The intersection pairing is also modified and now reads
$$\lan \sigma_1,\sigma_2\ran^E=\int_{\WmodG}\sigma_1\sigma_2\mathrm{e}(\underline{E}),$$
so that it coincides with the intersection pairing of $j^*\sigma_1$ and $j^*\sigma_2$ on $Z/\!\!/\G$.

For example,
\begin{align*}&\tilde{J}^{\ke, E}({\bf t}, z)=\left(\one+\frac{{\bf t}}{z}\right)\mathrm{e}(\underline{E})+
\sum_{(k,\beta)\neq (0,0),(1,0)}q^\beta\times \\
&\times(\eb)_*\left(\frac{\prod_{i=1}^k ev_i^*({\bf t})}{k!}\cap\mathrm{Res}_{F_0}([\QGraphe]^{\mathrm{vir}}\cap
{\mathrm{e}}(R^0\pi_*E_{0,k,\beta}))\right),
\end{align*}
and 
\begin{align*}
&\tilde{S}^{\ke,E}(z)(\gamma)=\gamma\mathrm{e}(\underline{E})+
\sum_{(m,\beta)\neq (0,0)}\frac{q^\beta}{m!} \times \\
&\times (ev_1)_*\left(\frac{[\mathrm{Q}^\ke_{0,2+m}(\WmodG,\beta)]^{\mathrm{vir}}\cap
{\mathrm{e}}(R^0\pi_*E_{0,k,\beta})}{z-\psi}ev_2^*(\gamma)\prod_{j=3}^{2+m}ev_j^*({\bf t})\right).
\end{align*}

If the insertions $\gamma, {\bf t}$ are taken in $H^*_{\T}(\WmodG,\QQ[[q]])$, then by the above discussion the non-equivariant limits of these
series will give the push-forward by the inclusion $j$ of the
corresponding objects for $Z/\!\!/\G$ restricted to $j^*H^*(\WmodG)$, and with the Novikov parameters specialized if needed: $q^{\beta'}=q^\beta$
for $j_*(\beta')=\beta$.

Now define the twisted $J^\ke$-function, the twisted $S$-operator, and the twisted $P^\ke$-series by requiring
\begin{align}
\lan \tilde{J}^{\ke, E}({\bf t}, z), \delta\ran&=\lan J^{\ke, E}({\bf t}, z) ,\delta\ran^E,\\ \lan \tilde{S}^{\ke,E}(z)(\gamma), \delta\ran&=\lan S^{\ke,E}(z)(\gamma) ,\delta\ran^E,\\
\lan \tilde{P}^\ke,\delta\ran&=\lan P^\ke ,\delta\ran^E,
\end{align}
for every $\delta\in H^*_{\T,\mathrm{loc}}(\WmodG,\QQ)$. Since the tilde-series are divisible by $\mathrm{e}(\underline{E})$, we see that $J^{\ke, E}$, $S^{\ke,E}(z)(\gamma)$,
and $P^\ke$ will be elements in
$H^*_{\T}(\WmodG,\QQ[[q]])\{\!\{z,z^-{1}\}\!\}$ when the insertions are taken in the non-localized equivariant cohomology $H^*_{\T}(\WmodG,\QQ[[q]])$.

Note in addition that the same procedure gives the twisted version of the series \eqref{gen string} defining the generalized string transformation.
 
\subsubsection{Local targets and twisting}
The quasimap theory of the local targets of Example \ref{local targets} can be viewed alternatively as the theory of $\WmodG$ twisted by
the {\it inverse} Euler class ${\mathrm e}^{-1}$. However, if $\T$ acts with isolated fixed points on $\WmodG$, then $\T\times\bS$ will act with
the same fixed points on the total space of the vector bundle $\underline{E}$ (and the $1$-dimensional orbits will also be the same). Hence we may (and will) treat this case
as the untwisted theory of a target with good torus action.

\subsection{The main theorems }  Let $\WmodG$ be acted by $\T$ with isolated fixed points. Let $E$ be a convex $\G$-representation 
as in \S\ref{twisted 1}. 
The statements of the results in this subsection, and the proofs we give in the rest of the section  apply both to the untwisted and the $E$-twisted theories of $\WmodG$. To simplify 
the exposition we will drop $E$ from the notation from now on. Hence, for the rest of this section, the notation $J^\ke, S^\ke, P^\ke, \tau^{\ke_1,\ke_2}...$ will mean either the
generating series for invariants of $\WmodG$, or the series for $E$-twisted invariants.

\begin{Thm}\label{equiv Thm1}  Let $0+\leq\ke_1<\ke_2\leq \infty$ be stability parameters. Let 
$\gamma\in H^*_{{\bf T},{\mathrm{loc}}}(\WmodG,\Lambda)$ be invertible, of the form $\one+O(q)$. Let $\tau_\gamma^{\ke_1,\ke_2}({\bf t})$ be the
($\T$-equivariant) generalized string transformation \eqref{gen string}.  Then
$$S^{\ke_1}_{\tau^{\ke_1,\ke_2}_\gamma ({\bf t})}(z)(\gamma)=S^{\ke_2}_{\bf t}(z)(\gamma).
$$
\end{Thm}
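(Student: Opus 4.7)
The plan is to adapt Givental's characterization/recursion approach, now available because $\T$ acts on $\WmodG$ with isolated fixed points. The argument combines $\T$-localization on $\WmodG$ with virtual $\CC^*$-localization on quasimap graph spaces, and proceeds by induction on the $L_\theta$-degree $\beta$. Since $(\WmodG)^\T$ is finite, any $F \in H^*_{\T,\mathrm{loc}}(\WmodG,\Lambda)\{\!\{z,z^{-1}\}\!\}$ is determined by its restrictions $F|_p$ to the points $p \in (\WmodG)^\T$, so it suffices to prove equality after restriction at each $p$ of $A := S^{\ke_1}_{\tau^{\ke_1,\ke_2}_\gamma ({\bf t})}(z)(\gamma)$ and $B := S^{\ke_2}_{\bf t}(z)(\gamma)$.

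For each $\ke$, I would rewrite $S^\ke_{\bf t}(z)(\gamma)|_p$ as a $\CC^*$-equivariant residue on a graph space, using a generating series built from integrals of the form $\int [\QGraphe]^{\mathrm{vir}}\cap \tilde{ev}_1^*(\phi_p\, p_0)\cap \prod_{j\ge 2} \tilde{ev}_j^*({\bf t})$, where $\phi_p$ is Poincar\'e dual to $p$ and $p_0$ is the class defined in \eqref{equi insertions}. Virtual $\CC^*$-localization decomposes this into contributions from the fixed loci classified in \S4.1: the loci $F_{k_2,\beta_2}^{k_1,\beta_1}$ concentrated at $0$ reassemble $S^\ke_{\bf t}(z)(\gamma)|_p$, while mixed loci produce \emph{edge factors}—coming from the parametrized $\PP^1$ viewed as a cover of a $\T$-invariant rational curve through $p$—multiplied by strictly lower-degree 2-point $\ke$-series at other $\T$-fixed points. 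The polynomiality in $z$ of the overall graph-space integral, in the spirit of Proposition \ref{poly lemma}, yields a balance identity that expresses $S^\ke_{\bf t}(z)(\gamma)|_p$ recursively in terms of its polynomial-in-$z$ part and strictly lower-degree 2-point series.

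The crucial input is that the edge factors are $\ke$-independent. A $(\T\times\CC^*)$-fixed parametrized $\PP^1$ covering a $\T$-invariant line through $p$ carries no base points on the parametrized component, and its contribution to the virtual normal bundle can be read off, via the morphisms $\iota_\ke$ and $\Phi_\ke$ of \S3, from the Drinfeld space $QG_{0,0,d}(\PP^N_{A^\G})$, where the stability parameter plays no role. In the $E$-twisted setting, the sheaf $R^0\pi_*E^\ke_{0,k,\beta}$ splits along the decomposition of the universal curve induced by $\CC^*$-localization, and its factor along the parametrized $\PP^1$ is again $\ke$-independent because convexity of $E$ forces the relevant $H^0$ to depend only on the cover and its ramification data.

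Granting these inputs, the induction on $\beta$ closes cleanly. The base case $\beta = 0$ gives $A|_p = B|_p = e^{{\bf t}|_p/z}\,\gamma|_p$ by direct computation of descendant integrals on $\overline{M}_{0,m+2}$. At the inductive step, the balance identity writes the $q^\beta$-parts of $A|_p$ and $B|_p$ as the \emph{same} universal $\ke$-independent linear combination of strictly lower-degree 2-point data (equal by the inductive hypothesis) plus a polynomial-in-$z$ seed that is determined by the mod $1/z^2$ asymptotics—which match by the defining property \eqref{1/z matching} of $\tau^{\ke_1,\ke_2}_\gamma$. The main obstacle is running the edge-factor analysis uniformly across all chambers: one must carefully incorporate the unstable fixed loci $F_{0,0,\beta'}^{g,k,\beta''}$ appearing only when $\ke\le 1/\beta'(L_\theta)$ (cf.\ \S4.1) whose normal-bundle Euler classes are not given by the universal formula \eqref{cont k>0}, and one must verify that isolated $\T$-fixed points alone—without assuming isolated one-dimensional $\T$-orbits—suffice to keep the residue bookkeeping finite and explicit, as needed to cover examples such as the Hilbert scheme of points discussed after the statement of Theorem \ref{Main}.
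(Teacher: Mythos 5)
Your high-level strategy is Givental's and coincides with the paper's (restrict to the finite set of $\T$-fixed points; determine the restrictions by a recursion + polynomiality + mod-$1/z^2$ asymptotics; isolate the $\ke$-dependence in the "initial" terms and observe that the linking/edge data are $\ke$-independent). But there is a genuine confusion in the middle that would have to be repaired before this is a proof.

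You conflate the two localizations and try to extract \emph{both} the recursion and the polynomiality from one $\CC^*$-localization on the graph space. In the paper these are deliberately kept apart. The recursion relation (Lemma \ref{recursion lemma}) is obtained by applying \emph{$\T$-localization to the unparametrized spaces $\mathrm{Q}^\ke_{0,2+m}(\WmodG,\beta)$}; the "edge factors" there are the unbroken components $M'\subset\overline{M}_{0,2}(\WmodG,\beta')^{\T}$ sitting inside the $\T$-fixed locus — these are moduli of two-pointed stable \emph{maps}, which is exactly why they are $\ke$-independent. They have nothing to do with the parametrized $\PP^1$ of the graph space: under the $\CC^*$-action of \S4 the distinguished $\PP^1$ is always contracted to a point of $\WmodG$ on a $\CC^*$-fixed locus, so there is no "$(\T\times\CC^*)$-fixed $\PP^1$ covering a $\T$-invariant line" to analyze via $QG_{0,0,d}(\PP^N_{A^\G})$. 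The graph space enters only for the polynomiality statement (Lemma \ref{local poly}), which is a constraint on the convolution $S^\ke_\mu(q,{\bf t},z)\,S^\ke_\mu(qe^{-zyL_\theta},{\bf t},-z)$, not a recursive expression for $S^\ke_\mu$ by itself. As stated, your "balance identity from polynomiality" does not express $S^\ke_\mu$ recursively in lower-degree data; it is the $\T$-localization recursion, combined separately with this polynomiality and with the pole-structure input (Lemma \ref{poles}), that yields the inductive determination (packaged as the Uniqueness Lemma \ref{uniqueness lemma}).

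Two further points. First, you would need an analogue of Lemma \ref{poles} controlling the locations and orders of poles in $z$; this is what makes the descending induction in the uniqueness argument close (it pins down the partial-fraction shape so that the $1/z^2$ tail is forced to vanish). Second, you correctly flag the non-isolated one-dimensional orbit issue but leave it unresolved; the paper handles it precisely by allowing positive-dimensional unbroken components $M'$ in Lemma \ref{recursion lemma}, with nilpotent nonequivariant classes $\psi_0,\psi_\infty$ replacing the scalar weights of the isolated-orbit case, and by proving that the resulting poles are still of the form $z=\ka$ for $-n\ka$ a tangent weight (Lemma \ref{poles}). Once these two lemmas and Lemma \ref{local poly} are in place, the statement you are after is an immediate application of the Uniqueness Lemma, which is exactly how the paper concludes.
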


\begin{Cor} The equivariant version of Conjecture \ref{mirror J1} holds for both the $E$-twisted and untwisted theories of $\WmodG$. The equivariant version of
Conjecture \ref{mirror nef} holds for semi-positive $(W,\G,\theta)$, and when $\beta(\det(T_W)) -\beta(\det(W\times E))$ is nonnegative for all $L_\theta$-effective classes $\beta$, it holds for the $E$-twisted theory as well. 
\end{Cor}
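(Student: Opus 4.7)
The plan is to follow Givental's strategy from the proofs of the toric mirror theorems, adapted to $\ke$-stable quasimap theory. Because the generalized string transformations compose naturally---$\tau^{\ke_1,\ke_3}_\gamma = \tau^{\ke_1,\ke_2}_\gamma \circ \tau^{\ke_2,\ke_3}_\gamma$ directly from their definition as $(\tau^{\ke_1}_\gamma)^{-1}\circ\tau^{\ke_2}_\gamma$---I would first reduce to the case $\ke_2=\infty$ and aim to prove $S^\ke_{\bf t}(z)(\gamma)=S^\infty_{\tau^{\ke,\infty}_\gamma({\bf t})}(z)(\gamma)$ for each $\ke\ge 0+$; the two-parameter statement then follows by applying this to $\ke_1$ and $\ke_2$ separately and composing. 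At $1/z$-order the equality is exactly \eqref{1/z matching}, which will pin down the required change of variables uniquely.

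The main geometric input will be $\T\times\CC^*$-virtual localization on the graph spaces $QG^\ke_{0,k,\beta}(\WmodG)$, used in the same spirit as in the proofs of Proposition~\ref{Unitary} and Theorem~\ref{Birkhoff Thm}. The idea is to build a graph-space series that has $S^\ke_{\bf t}(z)(\gamma)$ as a natural piece, for instance $\sum_i \gamma^i\lla\gamma_ip_\infty,\gamma\rra^{QG^\ke}_2$, and localize. For each $\T$-fixed point $p\in(\WmodG)^\T$ and each $L_\theta$-effective $\beta$, the contributing $\T\times\CC^*$-fixed loci should split, as in \S\ref{fixed loci}, into a \emph{vertex-at-$\infty$} part (assembling the $S^\ke$-operator at $p$ with argument $\gamma$) times a \emph{vertex-at-$0$}/edge part (recording how the degree is distributed at $0\in\PP^1$). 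The crucial observation, already emphasized in the introduction, is that this edge factor should be \emph{$\ke$-independent}: the ampleness condition in the $\ke$-stability of graph spaces is imposed only on the closure of $C\setminus C_0$, so the unstable fixed locus $\mathrm{Q}_{0,0+\bullet}(\WmodG,\beta)_0$ arising when $\beta(L_\theta)\le 1/\ke$ and the rational-tail fixed locus in $G_{0,0,\beta}(\WmodG)$ in the $\ke=\infty$ theory contribute the same $\T\times\CC^*$-equivariant residue at $p$. Granting this, $S^\ke_{\bf t}(z)(\gamma)$ and $S^\infty_{{\bf t}'}(z)(\gamma)$ will admit identical localized expansions over $\T$-fixed points, and matching the vertex-at-$\infty$ factors will force ${\bf t}'=\tau^{\ke,\infty}_\gamma({\bf t})$.

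The hard part will be establishing this $\ke$-independence of the edge factor rigorously: one must compare the virtual residue $(\eb)_\ast([F_0]^{\mathrm{vir}}/\mathrm{e}_{\CC^*}(N_{F_0}))$ at $p$ computed inside the $\ke$-stable graph space with its stable-map counterpart, and show they agree as elements of $H^*_{\T,\mathrm{loc}}(\WmodG,\Lambda)(z)$. I would carry this out using the contraction morphisms of \S\ref{rat-tails} and \S\ref{marking to bp}, which collapse rational tails and markings into base-points of matching total length; the key lemma is the compatibility of virtual classes and obstruction theories under these contractions, which is the same mechanism already used implicitly in Theorem~\ref{Birkhoff Thm}. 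The absence of the isolated $1$-dimensional orbit hypothesis is not an obstruction here: the argument only localizes on $\WmodG$ to the isolated $\T$-fixed points, and the vertex contributions at each $p$ remain virtual integrals over full moduli spaces rather than combinatorial sums. For the $E$-twisted case, no new idea is needed: the twisting class $\mathrm{e}(R^0\pi_\ast E^\ke_{0,k,\beta})$ restricts multiplicatively across the $\CC^*$-fixed decomposition, and its restriction to the parametrized $\PP^1$ (mapping constantly to $p$) contributes an additional $\ke$-independent factor at $p$, which is absorbed into the edge factor without affecting the matching. The desired equality $S^{\ke_1}_{\tau^{\ke_1,\ke_2}_\gamma({\bf t})}(z)(\gamma)=S^{\ke_2}_{\bf t}(z)(\gamma)$ will then follow by summing the localized expansions over all $\T$-fixed points.
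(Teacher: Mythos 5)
The corollary is an immediate consequence of Theorem~\ref{equiv Thm1} already proved in the paper, and your proposal is essentially re-deriving that theorem from scratch rather than deducing the corollary from it. The intended argument is just unwinding definitions: take $\gamma=\one$, $\ke_1=\ke$, $\ke_2=\infty$ in Theorem~\ref{equiv Thm1} to get $S^{\ke}_{\tau^{\ke,\infty}_\one ({\bf t})}(z)(\one)=S^{\infty}_{\bf t}(z)(\one)$; the right side is $J^\infty({\bf t},z)$ by the string equation, and $\tau^{\ke,\infty}_\one = (\tau^\ke_\one)^{-1}$ since $\tau^\infty_\one$ is the identity (string equation again), so after the change of variables ${\bf t}\mapsto\tau^\ke_\one({\bf t})$ this is exactly the statement of Conjecture~\ref{mirror J1}. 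For Conjecture~\ref{mirror nef}, combine this with Corollary~\ref{nef big} ($J^\ke/J^\ke_0=S^\ke_{\bf t}(z)(\one)$ in the semi-positive case) and the expression for $\tau^\ke$ in~\eqref{mirror 1}. The twisted statement is identical because the entire framework of \S\ref{twisting}, including Theorem~\ref{equiv Thm1} itself, has already been set up to cover the $E$-twisted theory; the semi-positivity hypothesis $\beta(\det T_W)-\beta(\det(W\times E))\geq 0$ is just what is needed for the dimension count of \S\ref{semi-positive} to go through in the twisted setting.

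As for the sketch you gave of proving the underlying wall-crossing for $S^\ke$ directly via a graph-space series and an ``$\ke$-independent edge factor'': this is in the right spirit, but it is not how the paper proves Theorem~\ref{equiv Thm1}, and it leaves the hard part unresolved. The paper's actual proof applies $\T$-localization to the \emph{unparametrized} quasimap moduli $\mathrm{Q}^\ke_{0,2+m}(\WmodG,\beta)$, not to the graph spaces, and the $\ke$-independence it exploits is not of the residue $(\eb)_*([F_0]^{\mathrm{vir}}/\mathrm{e}_{\CC^*}(N_{F_0}))$ — that residue is genuinely $\ke$-dependent, as its dependence on $\ke$ is precisely what distinguishes $I$ from $J$ — but rather of the \emph{unbroken} components of $\mathrm{Q}^\ke_{0,2}(\WmodG,\beta)^{\T}$, which always parametrize two-pointed $\T$-fixed stable maps regardless of $\ke$ and therefore give $\ke$-independent recursion coefficients (Lemma~\ref{recursion lemma} and Remark~\ref{recursion remark}). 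This recursion is then combined with the polynomiality of Lemma~\ref{local poly} and the Uniqueness Lemma~\ref{uniqueness lemma} to pin down $S^\ke_\mu$ from its initial terms. Your proposed route of ``comparing edge factors using contraction morphisms'' would require controlling how virtual classes and normal bundles transform under the contractions of \S\ref{rat-tails}--\S\ref{marking to bp}, which is not established in the paper and is substantially harder than the Givental recursion argument the paper actually uses.
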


Assuming that $\WmodG$ is projective 
and $\gamma\in H^*_{\T}(\WmodG,\QQ[[q]])$ is a non-localized equivariant class, we may pass to the non-equivariant limit in Theorem \ref{equiv Thm1}.

\begin{Cor}\label{nonequiv limit} Let $\WmodG$ be projective, admitting an action by a torus with isolated fixed points. Then

$(i)$ Conjecture \ref{conj S} holds for $\WmodG$. In particular, Conjecture \ref{mirror J1} holds and  
Conjecture \ref{mirror nef} holds for $\WmodG$ with $(W,\G,\theta)$ semi-positive.

$(ii)$ If $j:Z/\!\!/\G\hookrightarrow\WmodG$ is the zero locus of a section of a vector bundle $\underline{E}$ induced by a convex representation, then Conjectures \ref{conj S}
and \ref{mirror J1} hold for the quasimap theory of $Z/\!\!/\G$
restricted to $j^*H^*(\WmodG)$, and with the Novikov parameters specialized by setting $q^{\beta'}=q^\beta$
for $j_*(\beta')=\beta$. If $(Z,\G,\theta)$ is semi-positive, then Conjecture \ref{mirror J1} holds with the same restriction for insertions and the same Novikov specialization.

\end{Cor}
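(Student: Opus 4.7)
The plan is to derive Corollary \ref{nonequiv limit} from Theorem \ref{equiv Thm1} (and its $E$-twisted counterpart) by specializing the equivariant parameters $\lambda_1,\dots,\lambda_r$ to zero. The enabling observation is that when $\WmodG$ is projective, the affine quotient $W/_{\mathrm{aff}}\G$ reduces to a point, so by Theorem \ref{proper} the moduli stacks $\QmapWe$ and the graph spaces $\QGraphe$ are themselves projective. Consequently, $\T$-equivariant virtual integrals of non-localized classes are well-defined without invoking virtual localization and take values in $\QQ[\lambda_1,\dots,\lambda_r][[q]]$ rather than the localized ring $K[[q]]$. Once we choose invertible $\gamma\in H^*_\T(\WmodG,\QQ[[q]])$ of the form $\one + O(q)$ and let ${\bf t}$ range over the non-localized equivariant cohomology, the series $S^\ke_{\bf t}(z)(\gamma)$, $\tau^\ke_\gamma$, and the generalized string transformation $\tau^{\ke_1,\ke_2}_\gamma = (\tau^{\ke_1}_\gamma)^{-1}\circ\tau^{\ke_2}_\gamma$ then have coefficients (in each $q^\beta$, each monomial of ${\bf t}$, and each power of $z$ or $1/z$) polynomial in the $\lambda_i$. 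The inversion of $\tau^{\ke_1}_\gamma$ preserves this polynomiality because $\tau^{\ke_1}_\gamma({\bf t})={\bf t}\gamma + O(q)$ with $\gamma$ invertible over $\QQ[\lambda_1,\dots,\lambda_r][[q]]$.

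For part $(i)$, I then apply Theorem \ref{equiv Thm1} with these non-localized choices and specialize $\lambda_i\to 0$. The identity $S^{\ke_1}_{\tau^{\ke_1,\ke_2}_\gamma({\bf t})}(z)(\gamma) = S^{\ke_2}_{\bf t}(z)(\gamma)$ survives the specialization coefficient-wise and yields Conjecture \ref{conj S} for $\WmodG$ non-equivariantly. Conjectures \ref{mirror J1} and \ref{mirror nef} then follow as formal consequences already recorded in the paper (the former by taking $\ke_2=\infty$, $\gamma=\one$, and using the string equation on the Gromov-Witten side; the latter by further specializing ${\bf t}=0$ and invoking Corollary \ref{nef big} in the semi-positive case).

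For part $(ii)$, I run the same argument on the $E$-twisted theory of $\WmodG$. Convexity of $E$ ensures that $\mathrm{e}(R^0\pi_*E^\ke_{0,k,\beta})$ is the equivariant Euler class of an honest vector bundle and thus polynomial in $\lambda_i$, so twisting introduces no denominators. By the identification recalled in \S\ref{twisted 1}, pairing $\tilde S^{\ke,E}$, $\tilde J^{\ke,E}$, $\tilde P^{\ke,E}$ against classes of the form $j^*\delta$ (equivalently, using the modified pairing $\lan -,-\ran^E$) recovers the corresponding series for $Z/\!\!/\G$ restricted to pulled-back insertions, after the Novikov identification $q^{\beta'}=q^\beta$ along $j_*\colon \Hom(\Pic^\G(Z),\ZZ)\to\Hom(\Pic^\G(W),\ZZ)$. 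Thus the twisted equivariant wall-crossing identity descends, after $\lambda_i\to 0$, to the required identity on $Z/\!\!/\G$; the semi-positive statement follows exactly when $(Z,\G,\theta)$ is semi-positive, i.e. $\beta(\det T_W)\ge \beta(\det(W\times E))$ for all $L_\theta$-effective $\beta$.

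The main subtlety, and the point I would be most careful about, is verifying that the non-equivariant limit commutes with all the structural operations appearing in Theorem \ref{equiv Thm1}: the inversion defining $(\tau^{\ke_1}_\gamma)^{-1}$, the composition of $S$-operators, the formal manipulations in $z$ and $1/z$, and coefficient extraction in the completed ring $H^*(\WmodG,\Lambda)\{\!\{z,z^{-1}\}\!\}$. Because each of these operations is performed term-by-term in $q^\beta$ and in the monomials of ${\bf t}$, and because every such coefficient lies in $\QQ[\lambda_1,\dots,\lambda_r]$ by the projectivity argument above, specialization at $\lambda_i=0$ commutes with them. This reduces Corollary \ref{nonequiv limit} to an essentially formal consequence of the equivariant theorem, with convexity of $E$ used only to exclude $\lambda$-denominators in the twisted setting.
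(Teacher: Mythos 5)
Your proof is correct and follows essentially the same route as the paper, which disposes of this Corollary in a single sentence ("we may pass to the non-equivariant limit in Theorem \ref{equiv Thm1}"). Your write-up usefully spells out the implicit points — that projectivity of $\WmodG$ forces $\Spec(A^\G)$ to be zero-dimensional so the moduli spaces are genuinely projective, that the coefficients of $S^\ke$, $\tau^\ke_\gamma$, and of the inverse $(\tau^{\ke_1}_\gamma)^{-1}$ (constructed inductively in powers of $q$, inverting only ${\bf t}\mapsto{\bf t}$ at leading order) all lie in $\QQ[\lambda_1,\dots,\lambda_r][[q]]$, and that convexity of $E$ keeps the twisted Euler class polynomial in $\lambda$ — so that setting $\lambda_i=0$ commutes with every structural operation.
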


For the proof of next result we will need that the $1$-dimensional $\T$-orbits in $\WmodG$ are also well-behaved.

Let $P^{\infty,\ke}({\bf t},z)$ and $\tau^{\infty,\ke}({\bf t})$ be the $\T$-equivariant series from Lemma \ref{algebraic transformation}. 
As mentioned already, we will use the same notation
for the corresponding objects in the $E$-twisted theory and the statement in the following Theorem includes both the untwisted and twisted cases.

\begin{Thm}\label{equiv Thm2} If the $\T$-action on $\WmodG$ has isolated fixed points and isolated $1$-dimensional orbits, then for all $\ke\geq 0+$, 
$$S^\ke_{\bf t}(z)(P^\ke({\bf t}, z))=S^\infty_{\tau^{\infty,\ke}({\bf t})}(z)(P^{\infty,\ke}(\tau^{\infty,\ke}({\bf t}),z)).$$
In particular, the function $J^\ke({\bf t},z)=S^\ke_{\bf t}(z)(P^\ke({\bf t}, z))$ is on the Lagrangian cone of the ($E$-twisted) Gromov-Witten theory of $\WmodG$.
\end{Thm}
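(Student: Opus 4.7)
The plan is to establish the equality by combining the wall-crossing for the $S$-operator already available from Theorem \ref{equiv Thm1}, the Birkhoff factorization $J^\ke = S^\ke_{\bf t}(P^\ke)$ from Theorem \ref{Birkhoff Thm}, and a torus-localization characterization of the Lagrangian cone $\cL ag_{\WmodG}$ that exploits the isolated $1$-dimensional orbits hypothesis. First I would write the desired identity as $J^\ke({\bf t}, z) = S^\infty_{\tau^{\infty,\ke}({\bf t})}(z)(P^{\infty,\ke}(\tau^{\infty,\ke}({\bf t}), z))$; this is precisely the statement that $J^\ke$ lies on $\cL ag_{\WmodG}$ at the point with coordinate $\tau^{\infty,\ke}({\bf t})$ along the ruling direction $P^{\infty,\ke}$. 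By the uniqueness in Lemma \ref{algebraic transformation}, both sides already agree modulo $1/z^2$, so it suffices to show that the difference vanishes to all orders in $1/z$.

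Second, I would invoke the Givental-style characterization of elements on the Lagrangian cone of an equivariant target with isolated fixed points and isolated $1$-dimensional orbits. Under these hypotheses, a function $F({\bf t}, z) \in H^*_{\T,\mathrm{loc}}(\WmodG, \Lambda)\{\!\{z, z^{-1}\}\!\}$ lies on $\cL ag_{\WmodG}$ if and only if (i) its non-equivariant limit exists and (ii) for every torus fixed point $p$, the restriction $F|_p(z)$ admits only simple poles at the finite set of characters $z = -\chi_{p,p'}/d$ indexed by adjacent fixed points $p'$ and positive integers $d$, with explicitly prescribed residues given by the restriction of $F$ at $p'$ multiplied by a universal edge factor depending only on the geometry of the $\T$-invariant $\PP^1$ joining $p$ and $p'$. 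Both the GW and quasimap versions of this recursion are derived by $\T$-virtual localization applied to the moduli space used in the definition of the respective $J^\ke$, namely the quasimap graph space $\QGraphe$: the fixed loci organize as trees whose edges correspond to multiple covers of $\T$-invariant $\PP^1$'s and whose vertices correspond to unpointed quasimap moduli supported at fixed points.

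Third, the essential $\ke$-independence statement I need is that the edge factors in this localization recursion are the same for every $\ke \geq 0+$. This is the observation already flagged in the introduction: a $\T$-invariant $\PP^1$ mapping to a $1$-dimensional orbit has $L_\theta$-degree one on each component it covers with multiplicity $d$, so the corresponding quasimap has no base points, length zero everywhere, and is $\ke$-stable for all $\ke$; consequently the normal bundle contribution and the vertex/edge bookkeeping are identical in every chamber. Therefore $J^\ke|_p(z)$ satisfies the same recursion relations at $p$ as $J^\infty|_p(z)$, only with the vertex contributions replaced by the $\ke$-dependent analogues. On the other hand, $S^\infty_{\tau^{\infty,\ke}({\bf t})}(z)(P^{\infty,\ke}(\tau^{\infty,\ke}({\bf t}), z))$ lies tautologically on $\cL ag_{\WmodG}$ because $S^\infty_\tau(z)(\gamma) = \sum_i \gamma^i\, z\partial_{t_i} J^\infty(\tau, z)$ parametrizes the ruling directions, and hence satisfies the same recursions. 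Combining these two assertions with the matching modulo $1/z^2$ gives an inductive argument on the pole order: the residues at $z = -\chi_{p,p'}/d$ on both sides are determined by lower-order data, and the polynomial (entire) parts of the restrictions are controlled by a vanishing argument comparing both sides at infinity.

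The main obstacle will be the precise verification of the edge/vertex factorization on the quasimap side for arbitrary $\ke$ and the rigorous implementation of the recursive characterization of $\cL ag_{\WmodG}$; in particular one must identify vertex contributions of $J^\ke$ with evaluations of the $S^\ke$-operator and $P^\ke$-series at the fixed points, so that the recursive step truly reduces to Theorem \ref{equiv Thm1}. All other ingredients, namely Theorems \ref{Birkhoff Thm} and \ref{equiv Thm1} and Lemma \ref{algebraic transformation}, are already in place.
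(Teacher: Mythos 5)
Your overall plan is in the right spirit and shares the paper's strategy of reducing to a recursion/uniqueness argument, but it has two genuine gaps, both serious.

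First, your characterization of the Lagrangian cone via ``(i) existence of a non-equivariant limit and (ii) simple poles with prescribed residues'' is not a correct characterization and does not uniquely pin down a point on the cone. Knowing the recursion at the simple poles $z=-w(\mu,\nu)/n$ only determines the singular parts of the restrictions $F|_p(z)$; the Laurent-polynomial (entire) part of $F|_p(z)$ is completely free under conditions (i)--(ii). Since $P^\ke$ and $P^{\infty,\ke}$ contain arbitrary nonnegative powers of $z$, both sides of the claimed identity have nontrivial polynomial parts, and a ``vanishing argument comparing both sides at infinity'' does not work. The paper's proof instead relies crucially on the polynomiality condition (property (3) of the Uniqueness Lemma, supplied here by Lemma~\ref{local poly} and Remark~\ref{modification}): the self-convolution $D(S_{j,\mu})=S_{j,\mu}(q,{\bf t},z)\,S_{j,\mu}(qe^{-zyL_\theta},{\bf t},-z)$ has no pole at $z=0$. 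Combined with the matching mod $1/z^2$ and the recursion at the nonzero poles, polynomiality is exactly what lets the bi-degree induction in the Uniqueness Lemma pin down the polynomial part of $S_{1,\mu}-S_{2,\mu}$ term by term. This ingredient is not mentioned in your proposal and it is not derivable from your conditions (i)--(ii).

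Second, you need to be careful converting the recursion for the $S^\ke$-operator into a recursion for the composition $S^\ke_{\bf t}(z)(P^\ke({\bf t},z))$: the argument $P^\ke({\bf t},z)$ depends on $z$, so Lemma~\ref{recursion lemma} applied to $S^\ke(z)(\gamma)$ for a fixed $\gamma$ does not immediately apply. The paper handles this with the pole-removal decomposition $P^\ke(z)=P^\ke(-w(\mu,\nu)/n)+(z+w(\mu,\nu)/n)A^\ke_{\nu,n}(z)$ (equation \eqref{pole removal}); the first summand yields the recursion term with the same $\ke$-independent coefficients $C_{\mu,\nu,n}$, and the $A$-part is absorbed into the initial term $R_{j,\mu}(z)$, which therefore now lives in $K[z,z^{-1}]$ rather than $K[z^{-1}]$. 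One must also check (as the paper does) that the Uniqueness Lemma continues to hold when the initial terms are allowed to be Laurent polynomials in $z$. Your proposal skips this step entirely, and without it the claimed recursive structure for $J^\ke|_\mu$ and for $S^\infty_\tau(P^{\infty,\ke})|_\mu$ is not established. Once these two points are repaired (use Lemma~\ref{local poly}/Remark~\ref{modification} for polynomiality, and the pole-removal trick for the recursion), the rest of your outline does agree with the paper's proof: the $\ke$-independence of the edge coefficients, the role of isolated $1$-dimensional orbits in making the unbroken components zero-dimensional with simple poles, and the reduction to the Uniqueness Lemma are all correct.
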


The construction of $P^{\infty,\ke}({\bf t},z)$ and $\tau^{\infty,\ke}({\bf t})$ in Lemma \ref{algebraic transformation} 
does not involve any inversion, hence for ${\bf t}\in H^*_{\T}(\WmodG,\QQ)$ we may pass
to the non-equivariant limit in Theorem \ref{equiv Thm2} to obtain

\begin{Cor} Let $\WmodG$ be projective, admitting an action by a torus with isolated fixed points and isolated $1$-dimensional orbits.  Then

$(i)$ Conjecture \ref{big J} holds for $\WmodG$.

$(ii)$ If $j:Z/\!\!/\G\hookrightarrow\WmodG$ is the zero locus of a section of a vector bundle $\underline{E}$ induced by a convex representation, then Conjecture
\ref{big J} holds for the quasimap theory of $Z/\!\!/\G$
restricted to $j^*H^*(\WmodG)$, with the Novikov parameters specialized by setting $q^{\beta'}=q^\beta$
for $j_*(\beta')=\beta$.

\end{Cor}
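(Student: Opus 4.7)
The plan is to deduce both parts from Theorem \ref{equiv Thm2} by specializing to the non-equivariant limit $\lambda_1=\dots=\lambda_r=0$. Since $\WmodG$ is projective, I would take all insertions ${\bf t},\gamma$ in the non-localized equivariant cohomology $H^*_\T(\WmodG,\QQ[[q]])$. The equivariant quasimap invariants of $\WmodG$, and likewise the graph space invariants appearing in $P^\ke$, are then defined as honest intersection numbers (no localization is needed to define them); hence the series $J^\ke$, $S^\ke_{\bf t}$, and $P^\ke$ lie in $H^*_\T(\WmodG,\QQ[[q]])\{\!\{z,z^{-1}\}\!\}$ and admit well-defined non-equivariant limits.

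The first thing to verify is that the series $\tau^{\infty,\ke}({\bf t})$ and $P^{\infty,\ke}({\bf t},z)$ supplied by Lemma \ref{algebraic transformation} also take values in the non-localized ring. I would inspect the inductive construction degree-by-degree in $d=\beta(L_\theta)$: at each step the new contribution is determined by matching coefficients in
$$S^\ke_{\bf t}(z)(P^\ke({\bf t},z))=S^\infty_{\tau^{\infty,\ke}({\bf t})}(z)(P^{\infty,\ke}(\tau^{\infty,\ke}({\bf t}),z))\;\; (\mathrm{mod}\;\tfrac{1}{z^2}).$$
Because the leading terms $\tau^{\infty,\ke}({\bf t})={\bf t}+O(q)$ and $P^{\infty,\ke}=\one+O(q)$ reduce each inductive step to a linear equation of the shape ``unknown non-localized class on $\WmodG=$ known non-localized class'', no inversion of equivariant parameters is ever required. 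Thus $\tau^{\infty,\ke}$ and $P^{\infty,\ke}$ lie in the non-localized ring, and by the uniqueness clause of Lemma \ref{algebraic transformation} their specializations at $\lambda_i=0$ are precisely the non-equivariant analogues.

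Setting $\lambda_i=0$ in Theorem \ref{equiv Thm2} and combining with Theorem \ref{Birkhoff Thm} then gives
$$J^\ke({\bf t},z)=S^\ke_{\bf t}(z)(P^\ke({\bf t},z))=S^\infty_{\tau^{\infty,\ke}({\bf t})}(z)(P^{\infty,\ke}(\tau^{\infty,\ke}({\bf t}),z))$$
for the non-equivariant theory of $\WmodG$. Writing $P^{\infty,\ke}(\tau^{\infty,\ke}({\bf t}),z)=\sum_i C_i\gamma_i$, the right-hand side equals $\sum_i C_i\,z\partial_{t_i}J^\infty(\tau^{\infty,\ke}({\bf t}),z)$ and therefore lies on Givental's Lagrangian cone $\cL ag_{\WmodG}$. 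This proves (i).

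For (ii) I would repeat the argument for the $E$-twisted theory of $\WmodG$, which is included in Theorem \ref{equiv Thm2}. The twisted series are again non-localized, since $\mathrm{e}(\underline{E})$ is non-localized and the same inductive construction applies verbatim to define the twisted $\tau^{\infty,\ke}$ and $P^{\infty,\ke}$. By the identification described in \S\ref{twisted 1} between the $E$-twisted invariants of $\WmodG$ and the quasimap invariants of $Z/\!\!/\G$ for insertions pulled back via $j$, taking the non-equivariant limit of the twisted identity translates into Conjecture \ref{big J} for $Z/\!\!/\G$ restricted to $j^*H^*(\WmodG)$, with Novikov parameters identified by $q^{\beta'}=q^{j_*(\beta')}$. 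The main obstacle I anticipate is the bookkeeping in this last translation: one must verify that the twisted analogues of $\tau^{\infty,\ke}$ and $P^{\infty,\ke}$, which a priori live in $H^*(\WmodG,\Lambda)[[z]]$, restrict compatibly to the subring generated by $j^*H^*(\WmodG)$ under the Novikov specialization; this reduces to checking that $j^*H^*(\WmodG)$ is closed under the cup products appearing in the inductive step, which is automatic.
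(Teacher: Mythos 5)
Your proposal is correct and takes essentially the same route as the paper: the paper's justification for this Corollary is precisely the one-line observation (preceding the statement) that the construction of $P^{\infty,\ke}$ and $\tau^{\infty,\ke}$ in Lemma \ref{algebraic transformation} involves no inversion of equivariant parameters, so one may pass to the non-equivariant limit in Theorem \ref{equiv Thm2}, and for part (ii) invoke the twisted version together with the identification of $E$-twisted invariants of $\WmodG$ with quasimap invariants of $Z/\!\!/\G$. Your degree-by-degree elaboration of why the inductive step stays non-localized, and your remark about the $j^*H^*(\WmodG)$ bookkeeping in (ii), simply spell out what the paper leaves implicit.
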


The proofs of Theorems \ref{equiv Thm1} and \ref{equiv Thm2} use virtual localization for the $\T$-action on moduli spaces of quasimaps
and will occupy the rest of this section. 

\subsection{Fixed loci}
\subsubsection{$\T$-fixed quasimaps} Fix $\ke>0$, an effective class $\beta\neq 0$ and an integer $k\geq 0$. 
A $\T$-fixed quasimap
$$((C,x_1,x_2\dots, x_{m+2}),P,u)
$$
in $\mathrm{Q}^{\ke}_{0,2+m}(\WmodG,\beta)$ must have all its nodes, markings, ramifications, and base-points lying over the fixed locus $(\WmodG)^{\T}$.
Explicitly, this means the following. For any irreducible component $C'\cong\PP^1$ of $C$, let $(P',u')$ be the restriction of the pair $(P,u)$ to $C'$.
We have a rational map $$[u']:C'--\ra\WmodG,$$ inducing a regular map $$[u']_{reg}:C'\lra\WmodG$$ (possibly of smaller degree than the degree
of the quasimap $(C', P', u')$). Then one of the following must hold:
\begin{enumerate}
\item $[u']_{reg}$ is a constant map with image a fixed point $\mu\in (\WmodG)^{\T}$. We will say in this case that $C'$ is a {\it contracted component} of the quasimap.
\item $[u']_{reg}=[u']$ (i.e., there are no base-points on $C'$) and the map is a cover of a $1$-dimensional $\T$-orbit in $\WmodG$, totally ramified over the
two fixed points in the orbit.
The nodes/markings on $C'$, if any, must be mapped to the fixed points as well.
\item $[u']_{reg}\neq [u']$ and is a ramified cover of an orbit as in $(2)$ above, while $[u']$ has a base-point at one of the ramification points and a node at the other
ramification point. This possibility may occur only when $\beta(L_\theta)<\ke\leq 1$.
\end{enumerate}

\subsubsection{Components of the fixed loci}
When the $\T$-action on $\WmodG$ satisfies the additional condition that the $1$-dimensional $\T$-orbits
are isolated, the description of the connected components of the $\T$-fixed loci in the stable map moduli spaces $\mathrm{Q}^{\ke>1}_{0,2+m}(\WmodG,\beta)$ 
goes back to Kontsevich and is well-known. They are indexed by certain decorated trees and each such component is isomorphic to (a finite quotient
of) a product of moduli spaces of stable pointed curves $\overline{M}_{0,n}$, with the factors corresponding to the vertices of the tree.
For general $\ke$ a similar description holds, except
that each vertex factor now is a $\T$-fixed component in a $\ke$-quasimap space with target a fixed point $\mu\in (\WmodG)^{\T}$, viewed as a GIT quotient by $\G$.
In special cases, e.g. when $\WmodG$ is a Grassmannian, or a toric variety, they turn out to be
certain Hassett
moduli spaces $\overline{M}^\ke_{0,k|d}$ of stable marked curves with weights, cf. \cite{MOP}, \cite{Toda}, \cite{CKtoric2}.

When $1$-dimensional orbits vary in families, the fixed loci can still be indexed by decorated trees, but
their precise structure is more complicated, since in this case there will be factors of positive dimension
corresponding not just to the vertices
of the trees, but also to the edges. These edge moduli spaces will depend on the target and may be difficult to identify explicitly. 
An example is worked out by
Okounkov and Pandharipande, who treated the case of stable maps to the Hilbert scheme of points in $\CC^2$ in \S3.8.2 - 3.8.3 of \cite{OP}. 
However, we will only need the coarser (and obvious) information that certain edge factors are {\it independent on $\ke$} 
for a given target $\WmodG$, rather than a full description of 
either the edge or vertex moduli spaces.

\subsubsection{Unbroken quasimaps and unbroken components}
Each marking $x$ of $C$ has a fractional cotangent weight $\alpha_x$ given by the $\T$-representation induced by
the quasimap on the cotangent space $T^*_xC$. If $x$ is on a contracted component, then the weight is zero. 
Otherwise, $-n\alpha_x=w$ for some integer $n\geq 1$  and a weight $w$ of the 
representation $T_\mu (\WmodG)$, where $\mu\in(\WmodG)^{\T}$ is the image of $x$. Note that the weights of  $T_\mu (\WmodG)$ are all non-zero since the fixed points are
isolated.

Similarly, if $x$ is a node of $C$ and $C', C''$ are the components of $C$ incident to $x$, there
are fractional cotangent weights $\alpha_{C',x}$ and $\alpha_{C'',x}$ from the $\T$-representations $T^*_xC'$ and $T^*_xC''$ respectively. The node $x$ is called a 
breaking node if
$$\alpha_{C',x}+\alpha_{C'',x}\neq 0.
$$
As in \cite{OP}, we call a $\T$-fixed $\ke$-stable 
quasimap of class $\beta\neq 0$ to $\WmodG$ {\it unbroken} if all nodes of its domain curve are non-breaking. This implies that either the entire domain
curve is contracted by the quasimap, or that no components are contracted. We discuss the latter situation. Since we consider only quasimaps with at least two markings, the number of markings must be exactly
equal to two and there can be no base points. Hence a (non-contracting) unbroken quasimap must be a two-pointed stable map $f:(C,x_1,x_2)\lra\WmodG$. 
The domain curve $$C=C_1\cup\dots\cup C_l$$ is a chain of $\PP^1$'s with $x_1\in C_1$, $x_2\in C_l$ and nodes $$y_j=C_j\cap C_{j+1},\; j=1,\dots, l-1.$$
The weights at markings and nodes are all nonzero and satisfy
$$\alpha_{x_1}=-\alpha_{C_1,y_1},\;\; \alpha_{C_j,y_j}=-\alpha_{C_{j+1},y_j},\;\; \alpha_{C_l,y_{l-1}}=-\alpha_{x_2},$$
and in particular $\alpha_{x_1}=-\alpha_{x_2}$. We say that the stable map connects the fixed points $\mu=f(x_1)$ and 
$\nu=f(x_2)$. 

If $\cO(\theta)$ is given a nontrivial $\T$-linearization, let $w_\mu$ and $w_\nu$ be the weights of the $\T$-action on the fibers over $\mu$ and $\nu$ respectively.
Then a localization computation as in Lemma 4 of \cite{OP} shows that
$$\alpha_{x_1}=\frac{-w_\mu+w_\nu}{\beta(L_\theta)}.
$$
By the non-vanishing of $\alpha_{x_1}$ we conclude also that there are no non-contracting unbroken quasimaps connecting a $\T$-fixed point to itself.

Let $M$ be a connected component of the $\T$-fixed locus in the moduli space $\mathrm{Q}^{\ke}_{0,2}(\WmodG,\beta)$. 
If $M$ contains a non-contracting unbroken quasimap with
starting weight $\alpha_{x_1}$ and joining two fixed points $\mu$ and $\nu$, then all quasimaps in $M$
are unbroken, with the same starting weight, and join the points $\mu$ and $\nu$. 
(Note, however that the number of irreducible components of the domain curve may vary within $M$.) The important fact for us will be that
these unbroken components are the same for all
values of $\ke$, since they always parametrize stable {\it maps} of class $\beta$.

When there are more than two markings, the unbroken components will appear as the $\ke$-independent edge moduli spaces that were mentioned earlier. 
(There may also be edge factors corresponding to unbroken quasimaps with only one marking, but we will not be concerned with these explicitly.)

\subsection{Recursion}
We divide connected components $M$ of the fixed locus 
$\mathrm{Q}^{\ke}_{0,2+m}(\WmodG,\beta)^{\T}$ into two types. 
We say that $M$ is of {\it initial type} if it parametrizes quasimaps for which the {\it first} marking $x_1$ is on a contracted
irreducible component of the domain curve. We say $M$ is of {\it recursion type} if it parametrizes quasimaps with the first marking on a non-contracted
irreducible component.
Every $M$ is either of initial type, or of recursion type.

Every recursion component is of the form 
\begin{equation}\label{recursion splitting}
M\cong M' \times _{(\WmodG)^{\T}} M'' ,\end{equation} 
where

$(i)$  $M'\subset \overline{M}_{0,2}(\WmodG,\beta_{M'})^{\T}$ is an unbroken component for some $\beta_{M'}\neq 0$.

$(ii)$ $M''$ is a connected component of
$\mathrm{Q}^{\ke}_{0,2+m}(\WmodG,\beta-\beta_{M'})^{\T}$.

$(iii)$ $\ka_M=\ka _{M'} \neq \ka _{M''}$, where $\ka _M$ denotes the
cotangent weight at the first marking in any element of $M$ (and similarly for $M', M''$). 

\noindent The fiber product is taken using the evaluation map at the second marking in $M'$ and the
evaluation map at the first marking in $M''$. The component $M''$ may be of either initial or recursion type. 
In the unstable case $m=0$ and $\beta_{M'}=\beta$ we take $M''$ to be a fixed point in $\WmodG$ and $\ka_{M''}=0$.

We conclude  \[ \mathrm{Q}^{\ke}_{0,2+m}(\WmodG,\beta)^{\T} = 
\coprod _{M \text{ initial} } M \coprod _{M' \text{ unbroken}} \left( M' \times _{(\WmodG)^{\T}}\coprod _{ \ka _{M'} \ne \ka _{M''}} M'' \right)\]

Let 
$$\{ \delta_\mu \; | \; \mu\in (\WmodG)^{\T}\}$$
be the basis of the localized cohomology $H^*_{\T,\mathrm{loc}}(\WmodG,\QQ)$ consisting of the equivariant fundamental classes of the fixed points, i.e.,
$ \delta_\mu$ is equal to (the dual of) $(i_\mu)_*[\mu]$, where $ i_\mu:\{\mu\}\hookrightarrow\WmodG$ is the embedding.

For a cohomology class $\gamma\in H^*_{\T}(\WmodG,\Lambda)$ of the form $\gamma=\gamma_0 +O(q)$ 
we consider the components
 \[ S_\mu^\ke (\gamma) := \lan S_{\bf t}^\ke(z)(\gamma) , \delta _\mu \ran = i_\mu^*(\gamma)+ \sum_{(\beta,m)\neq (0,0)} 
 \frac{q^\beta}{m!} \lan \frac{\delta _\mu}{z-\psi}, \gamma,{\bf t},\dots,{\bf t} \ran _{0,2+m,\beta}^\ke\]
 of $S_{\bf t}^\ke(z)(\gamma)$ in the fixed point basis. Alternatively, these are the restrictions of $S_{\bf t}^\ke(z)(\gamma)$ at the fixed points,
 $$S_\mu^\ke (\gamma) = i_\mu^*(S_{\bf t}^\ke(z)(\gamma)).
 $$
 
 If, as in \S\ref{J}, $\{ t_i\}$ are the coordinates of ${\bf t}$ in a basis of $H^*_{\T,\mathrm{loc}}(\WmodG)$ over $K=\QQ(\lambda_j)$, then $S_\mu^\ke (\gamma)$ is an
 element of $\Lambda[[t_i,\frac{1}{z}]]=K[[q,t_i,\frac{1}{z}]]$.
 
 Given a $\T$-fixed point $\mu$, a class $\beta$ and an integer $m\geq 0$, we partition the set of connected components of 
 $\mathrm{Q}^{\ke}_{0,2+m}(\WmodG,\beta)^{\T}$ into three disjoint subsets
 \begin{equation}\label{partition}
 V(\mu,\beta,m)\coprod In(\mu,\beta,m)\coprod Rec(\mu,\beta, m)
 \end{equation}
 as follows:
 \begin{itemize}
 \item $V(\mu,\beta,m)$ consists of all components for which the first marking does {\it not} lie over $\mu$.
 \item $In(\mu,\beta,m)$ consists of all components of initial type for which the first marking lies over $\mu$.
 \item $Rec(\mu,\beta, m)$ consists of all components of recursion type for which the first marking lies over $\mu$.
 \end{itemize}
 
 \begin{Lemma}\label{poles} For each fixed $\beta$ and each tuple $\underline{k}:=(k_i)_i$ of nonnegative integers, the coefficient 
 of the monomial $q^\beta\prod_i t_i^{k_i}$ in $S_\mu^\ke (\gamma)$
 is a rational function of $z$ with coefficients in $K$. This rational function decomposes as a sum of simple partial fractions with denominators either powers of $z$, or
 powers of linear factors $z-\ka$, where $-n\ka$ is one of weights of the $\T$-representation  $T_\mu (\WmodG)$ for 
some $n \in \ZZ _{>0}$
 
 \end{Lemma}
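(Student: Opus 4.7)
The plan is $\T$-equivariant virtual localization applied to the bracket defining each coefficient of $S^\ke_\mu(\gamma)$ on $\mathrm{Q}^\ke_{0,2+m}(\WmodG,\beta)$, tracking how $z$ enters each fixed-component contribution. Because $\delta_\mu$ restricts to zero at every $\T$-fixed point of $\WmodG$ other than $\mu$, only components in $In(\mu,\beta,m)\sqcup Rec(\mu,\beta,m)$ contribute---those on which $ev_1$ takes the value $\mu$---and the components in $V(\mu,\beta,m)$ from the partition \eqref{partition} drop out of the sum.

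On such a component $M$, the $\T$-equivariant cotangent class decomposes as $\psi_1|_M = \ka_M + \bar\psi_1$, with $\ka_M \in K$ the scalar $\T$-weight on $T^*_{x_1}C$ (locally constant in families of $\T$-fixed quasimaps and hence constant on $M$) and $\bar\psi_1 \in H^*(M,\QQ)$ the ordinary, nilpotent $\psi$-class. If $M$ is of initial type, $x_1$ lies on a component of the domain contracted to $\mu$, so $\ka_M = 0$. If $M$ is of recursion type, $x_1$ is a totally ramified point of order $n\ge 1$ of a cover of a $1$-dimensional $\T$-orbit through $\mu$; the standard local computation of the induced action on $T^*_{x_1}C$ gives $\ka_M = -w/n$, where $w$ is the weight of $T_\mu(\WmodG)$ in the tangent direction to that orbit. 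Thus $-n\ka_M$ is one of the weights of $T_\mu(\WmodG)$ in either case.

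The localization contribution of $M$ therefore has the form $\Xi_M/(z-\ka_M-\bar\psi_1)$ for a $z$-independent class $\Xi_M$ on $M$ (assembled from evaluation pullbacks of $\gamma$ and ${\bf t}$, the restriction of the virtual class, and the inverse equivariant Euler class of the virtual normal bundle). Nilpotence of $\bar\psi_1$ collapses the geometric expansion
\[
\frac{1}{z-\ka_M-\bar\psi_1}=\sum_{j\ge 0}\frac{\bar\psi_1^{\,j}}{(z-\ka_M)^{j+1}}
\]
to a finite sum, and integration over $M$ produces a polynomial in $1/(z-\ka_M)$ with coefficients in $K$. For a fixed monomial $q^\beta\prod_i t_i^{k_i}$ only the single value $m=\sum_i k_i$ of $m$ contributes (by multilinearity in ${\bf t}$), and after expanding $\gamma=\gamma_0+O(q)$ the resulting coefficient is a finite sum of contributions of the type just analyzed, giving the claimed partial-fraction decomposition.

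The main technical step is the identification of $\ka_M$ in the recursion case: while the additive splitting $\psi_1|_M = \ka_M + \bar\psi_1$ is standard, showing that $-n\ka_M$ is literally a weight of $T_\mu(\WmodG)$ (rather than some more complicated rational combination thereof) reduces to the explicit local description of totally ramified covers of $1$-dimensional $\T$-orbits, and is where the specific form of the denominators in the lemma becomes visible.
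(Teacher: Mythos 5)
Your proof is correct and follows essentially the same route as the paper: expand in $1/z$, apply $\T$-virtual localization, use the $V/In/Rec$ partition of fixed-point components (with $V$ killed by $ev_1^*(\delta_\mu)$), split $\psi_1|_M = \ka_M + \bar\psi_1$ with $\bar\psi_1$ non-equivariant hence nilpotent, and read off the partial-fraction structure. The only cosmetic difference is that you spell out why $-n\ka_M$ is a weight of $T_\mu(\WmodG)$ via the local model of a totally ramified cover of a $1$-dimensional orbit, whereas the paper delegates this to its earlier discussion of fractional cotangent weights in the unbroken-quasimap subsection.
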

 
 \begin{proof}It suffices to assume $\gamma=\gamma_0\in H^*_{\T}(\WmodG,\QQ)$. 
 Let $a_{\beta,\underline{k}}$ be the coefficient in the Lemma and let $m=\sum_ik_i$ be the total degree in the $t_i$'s. Then
 \begin{align*}
 a_{\beta,\underline{k}}&=\int_{[\mathrm{Q}^{\ke}_{0,2+m}(\WmodG,\beta)]^{\mathrm{vir}}} \frac{ev_1^*(\delta_\mu)c_{\beta,\underline{k}}}{z-\psi}\\
 &=\sum_{j=0}^\infty \int_{[\mathrm{Q}^{\ke}_{0,2+m}(\WmodG,\beta)]^{\mathrm{vir}}} \frac{\psi^j ev_1^*(\delta_\mu)c_{\beta,\underline{k}}}{z^{j+1}}
 \end{align*}
 for some cohomology class $c_{\beta,\underline{k}}$ pulled-back from $(\WmodG)^{m+1}$ by the product of the remaining evaluation maps. In the case of
 $E$-twisted theory the Euler class of the twisting bundle is also incorporated in $c_{\beta,\underline{k}}$. By the formula above,
 $ a_{\beta,\underline{k}}$ is apriori an element in $K[[1/z]]$.
 
 We apply virtual localization to
 the integrals in the above expression. Each integral is a sum of contributions from the components of the fixed point locus. We use the partition \eqref{partition} to analyze
 these contributions.
 
 First, the contribution from components in $V(\mu,\beta,m)$ vanishes, due to the factor $ev_1^*(\delta_\mu)$ in the integrand. 
 
 Next, each initial component $M$ in $In(\mu,\beta,m)$ contributes only fractions with denominator a power of $z$. 
 This is because $\psi$ restricts to a {\it non-equivariant}
 class to $M$, which is therefore nilpotent.
 
 Finally, if $M$ is a recursion component in $Rec(\mu,\beta, m)$, then the restriction of $\psi$ to $M$ is $\psi_0+\ka_M$, with $\psi_0$ a non-equivariant class. 
 If $M$ is positive-dimensional the contribution can be written as
 $$
 \sum_{j=0}^\infty\int_{[M]^{\mathrm{vir}}} \frac{\psi_0^j(ev_1^*(\delta_\mu)c_{\beta,\underline{k}})|_M}{(z-\ka_M)^{j+1}\mathrm{e}(N^{\mathrm{vir}})}.
 $$
Again, by nilpotency 
of $\psi_0$, $M$ contributes finitely many fractions with denominator powers of $z-\ka_M$. If $M$ has dimension zero, then $\psi$ restricts to the pure weight $\ka_M$, so the
series 
$$ \sum_{j=0}^\infty\int_{[M]^{\mathrm{vir}}} \frac{\ka_M^j(ev_1^*(\delta_\mu)c_{\beta,\underline{k}})|_M}{z^{j+1}\mathrm{e}(N^{\mathrm{vir}})}
$$
is a geometric series and therefore the contribution is a fraction with a simple pole at $z=\ka_M$. The Lemma is proved.
 \end{proof}

With a more careful analysis of the contributions from the recursion-type fixed loci, the virtual localization theorem shows in fact that the restrictions to fixed points
$S_\mu^\ke$ satisfy a certain recursion relation. 

For a given $\mu\in(\WmodG)^{\T}$, let $U(\mu)$ denote the set of all components $M'\subset\overline{M}_{0,2}(\WmodG, \beta _{M'} )^{\T} $ 
(for varying $\beta_{M'}\neq 0$) parametrizing {\it unbroken}
stable maps $(C,x_1,x_2,f)$ with $f(x_1)=\mu$. For such an $M'$ we set $\nu_{M'}=f(x_2)$.

\begin{Lemma} \label{recursion lemma}
 $S_\mu^\ke$ satisfies the recursion relation
       \begin{equation}\label{recursion formula} 
       S_\mu ^\ke (z) = R_\mu^\ke (z) + \sum _{ M'\in U(\mu)} q^{\beta_{M'}} 
\left\lan \frac{\delta _\mu}{z - \ka _{M'}- \psi _0 }, S_{\nu_{M'}, \ka _{M'}}^\ke |_{z=\ka _{M'}-\psi _{\infty}}
\right\ran _{M'}
       \end{equation} 
where: 
\begin{itemize}
\item $\psi _0$, $\psi _\infty$ are nonequivariant cohomology classes.
\item Each $(q,\{t_i\})$-coefficient of $R_\mu ^\ke (z)$ is an element in $K[1/z]$.
\item For $\nu\in (\WmodG)^{\T}$, $S_{\nu, \ka}^\ke (z)$ is $S_\nu^\ke (z)$ after removing the partial fraction terms with poles at $z=\ka$ (this is well defined by Lemma
\ref{poles}).
\item The subscript $M'$ for the bracket means the $\T$-virtual localization contribution of the component $M'$ to the intersection 
number on $\overline{M}_{0,2}(\WmodG, \beta _{M'} )$.
\end{itemize}

\end{Lemma}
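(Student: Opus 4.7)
The plan is to compute $S_\mu^\ke(z) = i_\mu^*(S^\ke_{\bf t}(z)(\gamma))$ by virtual $\T$-localization on each moduli space $\mathrm{Q}^{\ke}_{0,2+m}(\WmodG,\beta)$ and then regroup the contributions according to the partition $V(\mu,\beta,m) \coprod In(\mu,\beta,m) \coprod Rec(\mu,\beta,m)$ introduced before Lemma \ref{poles}. First I would note, exactly as in the proof of Lemma \ref{poles}, that components in $V(\mu,\beta,m)$ give zero because of the factor $ev_1^*(\delta_\mu)$, while components in $In(\mu,\beta,m)$ contribute a polynomial in $1/z$ (with $K$-coefficients), since on an initial component the class $\psi$ at the first marking restricts to a non-equivariant nilpotent class. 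All these initial-type contributions, summed over $\beta,m$ and weighted by $q^\beta\prod_i t_i^{k_i}/\prod_i k_i!$, form the remainder term $R_\mu^\ke(z)$.

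The heart of the proof is to analyze a recursion-type component $M \in Rec(\mu,\beta,m)$. Using the splitting \eqref{recursion splitting}, $M \cong M' \times_{(\WmodG)^\T} M''$, where $M' \subset \overline{M}_{0,2}(\WmodG, \beta_{M'})^\T$ is an unbroken component with $f(x_1)=\mu$ and $f(x_2)=\nu_{M'}$, and $M''$ is a component of the fixed locus of some $\mathrm{Q}^{\ke}_{0,2+m}(\WmodG,\beta-\beta_{M'})$ with $\ka_{M''}\neq \ka_{M'}$. Under this splitting, the first $\psi$-class restricts as $\psi_0 + \ka_{M'}$ (with $\psi_0$ non-equivariant on $M'$) and the gluing node contributes a factor $1/(\ka_{M'}-\psi_\infty - e_{M''})$ where $\psi_\infty$ is the cotangent at $x_2$ on $M'$ and $e_{M''}$ is the cotangent at the first marking of $M''$. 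The virtual normal bundle factors, up to the standard node-smoothing term, as the product of the normal bundles of $M'$ and of $M''$; crucially this smoothing piece is precisely what lets the factor $1/(z-\ka_{M'}-\psi_0)$ from the original integrand combine with the $M''$-piece.

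Next I would regroup: for each unbroken $M' \in U(\mu)$, summing the contributions of all $M$'s of the form $M' \times_{(\WmodG)^\T} M''$ over all choices of $(\beta-\beta_{M'},m)$ and all admissible $M''$ is the same as summing the $\T$-localization contributions over all connected components of the fixed loci of the moduli spaces $\mathrm{Q}^{\ke}_{0,2+m}(\WmodG,\beta-\beta_{M'})$ whose first marking lies over $\nu_{M'}$ and whose first $\psi$-weight $\ka_{M''}$ is different from $\ka_{M'}$. By the very definition of $S_{\nu_{M'}}^\ke$ (as the restriction at $\nu_{M'}$ of the $S$-operator applied to $\gamma$) together with the decomposition of its $(q,t_i)$-coefficients into partial fractions given by Lemma \ref{poles}, this sum equals $S_{\nu_{M'},\ka_{M'}}^\ke$, evaluated with the formal parameter $z$ replaced by $\ka_{M'}-\psi_\infty$ (after extracting the $q^{\beta_{M'}}$ and pairing over $M'$).

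The main obstacle is the careful bookkeeping in this last step, in particular: (i) verifying that the node-smoothing factor correctly produces the kernel $1/(z-\ka_{M'}-\psi_0)$ and the substitution $z \rightsquigarrow \ka_{M'}-\psi_\infty$ inside $S_{\nu_{M'}}^\ke$; (ii) checking that the condition $\ka_{M''}\neq\ka_{M'}$ in the definition of recursion components matches exactly the removal of the $z=\ka_{M'}$ partial-fraction pieces that defines $S_{\nu_{M'},\ka_{M'}}^\ke$, so that no double counting or missing terms occur; and (iii) handling the unstable boundary cases $(m=0,\,\beta_{M'}=\beta)$ using the conventions stated in the lemma. Once these are in place, assembling the pieces gives exactly the recursion formula \eqref{recursion formula}.
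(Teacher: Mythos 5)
Your proposal follows essentially the same route as the paper's proof: virtual $\T$-localization, the vanishing of $V$-type contributions, the identification of initial-type contributions with $R_\mu^\ke$, and the fiber-product factorization of recursion components leading to the kernel $1/(z-\ka_{M'}-\psi_0)$ and the substitution $z\rightsquigarrow\ka_{M'}-\psi_\infty$. Where you flag items (i)--(iii) as the remaining bookkeeping, the paper fills in (i) concretely by comparing obstruction theories via the normalization sequence of the domain at the distinguished node (producing both the smoothing factor $(-\psi_2^{M'}-\psi_1^{M''})$ and the $\mathrm{e}^\T(T_{\nu_{M'}}(\WmodG))$ factor you did not mention explicitly), while (ii) and (iii) are handled essentially as you describe, so the argument is sound.
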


 \begin{proof}

We apply virtual $\T$-localization to $S^\ke_\mu$. The terms with $\beta=0$ and varying $m$ sum to $ i_\mu^*(e^{{\bf t}/z}\gamma)$. For each $\beta\neq 0$ and each $m\geq 0$,
the components of the fixed locus in $V(\mu,\beta,m)$ contribute zero, while the components of initial type in $In(\mu,\beta,m)$ contribute polynomials of
$1/z$ with coefficients (homogeneous of degree $m$) in $K[\{t_i\}]$. Summing over all $m,\beta$ gives the initial terms $R_\mu^\ke (z)$ as described in the Lemma.
The remaining part is obtained by summing over all $m$ and $\beta$ the contributions of components of recursion type in $Rec(\mu,\beta,m)$.

 The contribution of an $M\in Rec(\mu,\beta,m)$ 
 is 
 \begin{equation}\label{rec cont}q^{\beta} \int _{[M]^{\mathrm {vir}}} \frac{1}{{\mathrm {e}}^{\T} (N^{\mathrm {vir}}_M)}
 \left(\left.\frac{ev_1^*(\delta_\mu)ev_2^*(\gamma)\prod_{i=1}^m ev_{2+i}^*({\bf t})}{(z-\psi)m!}\right|_M\right) .
 \end{equation}
By \eqref{recursion splitting}, $$M\cong M' \times _{(\WmodG)^{\T}} M'' ,$$ with $M'\in U(\mu)$ parametrizing two-pointed maps of some class $\beta_{M'}\neq 0$
and $M''\in In(\nu_{M'},\beta-\beta_{M'},m)\coprod Rec(\nu_{M'},\beta-\beta_{M'},m)$.
Moreover, $\alpha_{M'}=\alpha_{M}$, and $\alpha_{M''}\neq\alpha_{M}$.

Recall that the virtual class of the fixed locus $M$, respectively the virtual normal bundle, are obtained from the fixed part, respectively the moving part of the {\it absolute}
obstruction theory of  $\mathrm{Q}^{\ke}_{0,2+m}(\WmodG,\beta)$. In turn, the absolute obstruction theory is obtained 
via distinguished triangles from the complex \eqref{obs theory}
$$\left (R^\bullet\pi_*(u^*\mathbb{R}T_\varrho)\right )^\vee,$$
the relative cotangent complex ${\mathbb {L}}^\bullet _{\BunG/\fM_{0,m+2}}$, and the cotangent complex ${\mathbb {L}}^\bullet_{\fM_{0,m+2}}$, see Remark 
4.5.3 in \cite{CKM}. 
The relative obstruction theory $E^\bullet$ over $\fM_{0,m+2}$ is obtained from the distinguished triangle
$$E^\bullet\lra  (R^\bullet\pi_*(u^*\mathbb{R}T_\varrho))^\vee\lra g^*{\mathbb {L}}^\bullet _{\BunG/\fM_{0,m+2}}[1]
$$
where $g$ is the projection to $\BunG$.

Consider the exact sequence
\begin{equation}\label{normalization}
0\lra \cO_C \lra\cO_{C'}\oplus\cO_{C''}\lra\cO_{C'\cap C''}\lra 0
\end{equation}
coming from splitting the domain curve at the node where the unbroken map $f:C'\lra \WmodG$ meets the rest of the quasimap.
The complex $E^\bullet$ on $M$ differs from the sum of the corresponding complexes on $M'$ and $M''$ only by a complex supported at the node, whose
cohomology is the representation $T_{\nu_{M'}}(\WmodG)$ in degree zero. Since all its $\T$-weights, are nontrivial, this term will only appear in the difference between
the virtual normal bundles. The complex ${\mathbb {L}}^\bullet_{\fM_{0,m+2}}$ differs from the sum of the complexes on the factors only by the deformation space at the
node which is the product
$$T_{x_2}C'\boxtimes T_{x_1}C'',$$
of tangent spaces at the indicated markings. It has weight $\alpha_M-\alpha_{M''}\neq 0$, so will contribute again to the normal bundle difference.

We conclude that 
$$[M]^{\mathrm {vir}}=[M']^{\mathrm {vir}}\times [M'']^{\mathrm {vir}}$$
and that 
$$\frac{1}{{\mathrm {e}}^{\T} (N^{\mathrm {vir}}_M)}= 
\frac{{\mathrm {e}}^{\T} ([T_{\nu_{M'}}(\WmodG)])}{{\mathrm {e}}^{\T} (N^{\mathrm {vir}}_{M'}){\mathrm {e}}^{\T} (N^{\mathrm {vir}}_{M''})(-\psi_2^{M'}-\psi_1^{M''})}.$$
Furthermore, the cotangent weight at the second marking on $M'$ is $-\alpha_M=-\alpha_{M'}$, hence  $-\psi_2^{M'}=\alpha_{M}-\psi_\infty$ with $\psi_\infty$ non-equivariant.
Hence the contribution \eqref{rec cont} can be written as a product of two virtual localization integrals on the factors,
\begin{align*}
\left(q^{\beta_{M'}}\int _{[M']^{\mathrm {vir}}} 
\frac{ev_1^*(\delta_\mu)}{(z-\alpha_{M'}-\psi_0){\mathrm {e}}^{\T} (N^{\mathrm {vir}}_{M'})}\right) \times \\
\left( q^{\beta-\beta_{M'}}
\int _{[M'']^{\mathrm {vir}}}\frac{ev_1^*(\delta_{\nu_{M'}}) ev_2^*(\gamma)\prod_{i=1}^m ev_{2+i}^*({\bf t})}{(\alpha_{M'}-\psi_{\infty}-\psi){\mathrm {e}}^{\T} (N^{\mathrm {vir}}_{M''})}   \right)
\end{align*}
The sum over all recursion components is then easily 
rearranged to obtain the recursion part
of \eqref{recursion formula}. 

Finally, we note that the argument works also in the twisted case, since the exact sequence \eqref{normalization} shows that 
$$H^0(C,P\times_\G E)\oplus \underline{E}_\mu= H^0(C', P|_{C'}\times_\G E)\oplus H^0(C'', P|_{C''}\times_\G E)
$$
as $\T$-representations,
and hence that the Euler class of the twisting bundle in the integrand also factors.
\end{proof}

 \begin{Rmk} \label{recursion remark}
 $(i)$ The main point of Lemma \ref{recursion lemma} is the following: up to initial terms which 
 are polynomial in $1/z$, the $q^\beta$-term in $S^\ke_\mu$ is calculated from various $q^{\beta-\beta' }$-terms 
  with $\beta '\neq 0$ in $S^\ke_\nu$ for other $\nu$'s 
  by a universal formula whose coefficients are {\it independent on} $\ke$ (but may depend on $\gamma$). It is in this sense that we think of \eqref{recursion formula}
  as a recursion relation. Given the initial terms $R_\mu ^\ke (z)$, the set of all $S^\ke_\mu,\; \mu \in(\WmodG)^{\T}$ is uniquely recovered from \eqref{recursion formula}.
 
 $(ii)$ The statement of the Lemma remains valid if we substitute ${\bf t}$ by $\tau({\bf t},q)\in H^*_{\T,\mathrm{loc}}(\WmodG,\Lambda)$.
 
 $(iii)$ If the $1$-dimensional $\T$-orbits in $\WmodG$ are isolated, then each unbroken component is just a point and the classes $\psi_0$ and $\psi_\infty$ vanish.
 The proof of Lemma \ref{poles} shows that in this case the coefficients of $S_\mu^\ke (\gamma)$ have
 only {\it simple} poles at $\frac{w}{-n}$ where $n$ runs over the positive integers and $w$ runs over the tangent weights at $\mu$ (and high order poles at $z=0$).
 Furthermore, since the tangent weights at each fixed point are independent, the formula \eqref{recursion formula} for $\gamma=\one$ turns into the usual Givental recursion
 \begin{equation}\label{isolated recursion}
  S_\mu ^\ke (z) = R_\mu^\ke (z) + \sum _{\nu\in o(\mu)} \sum_{n=1}^{\infty}q^{n\beta(\mu,\nu)}\frac{C_{\mu,\nu,n}}{z+\frac{w(\mu,\nu)}{n}}S_\nu ^\ke (-\frac{w(\mu,\nu)}{n})
 \end{equation}
 obtained in \cite{Givental-equiv}, \cite{Givental} for Gromov-Witten invariants.
 Here the first sum is over the set $o(\mu)$ of all fixed points $\nu$ connected to $\mu$ by a $1$-dimensional $\T$-orbit, $\beta(\mu,\nu)$ is
 the homology class of the orbit, and $w(\mu,\nu)$ is the tangent weight at $\mu$
 corresponding to the orbit. The coefficients $C_{\mu,\nu,n}$ are explicit rational functions in $K=\QQ(\lambda_j)$ and are {\it independent} on $\ke$.
 
 Hence the Lemma generalizes Givental's result in two 
 main directions: to all $\ke$-stable quasimap invariants, and to torus actions with not necessarily isolated $1$-dimensional orbits.

  \end{Rmk}
  
  \subsection{Polynomiality revisited} All the generating
  functions $S^\ke_{\bf t}(z)(\gamma)$ satisfy a polynomiality property analogous to the one given in \S\ref{poly}
  for the $J^\ke$-functions, and proved in a similar manner. However, we will need a slightly more refined localized version for each of the fixed-point components $S^\ke_\mu$.
  
  \begin{Lemma}\label{local poly} For each $\mu\in(\WmodG)^{\T}$ the series
  $$D(S^\ke_\mu):= S^\ke_\mu(q,{\bf t},z)S^\ke_\mu(qe^{-zyL_\theta},{\bf t},-z)
  $$
  has no pole at $z=0$, i.e., it is naturally an element in the formal power series ring
  $K[[q,t_i,y,z]]$.
  \end{Lemma}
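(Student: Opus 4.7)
The plan is to imitate the proof of Proposition \ref{poly lemma} (polynomiality via virtual $\CC^*$-localization on graph spaces) combined with the bilinear pairing argument of Proposition \ref{Unitary}, now with $\T$-equivariant insertions that localize the computation at the fixed point $\mu$.

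First I would introduce the $\T\times\CC^*$-equivariant graph-space generating series
\begin{equation*}
\mathcal{D}_\mu := \sum_{m,\beta}\frac{q^\beta}{m!}\int_{[QG^\ke_{0,2+m,\beta}(\WmodG)]^{\mathrm{vir}}}\tilde{ev}_1^*(\delta_\mu\, p_0)\cdot\tilde{ev}_2^*(\gamma\,\delta_\mu\, p_\infty)\cdot e^{c_1^{\CC^*}(U(L_\theta))y}\prod_{j=3}^{2+m}ev_j^*({\bf t}),
\end{equation*}
where $p_0, p_\infty\in H^*_{\CC^*}(\PP^1)$ are the classes of \eqref{equi insertions} and $\gamma$ is the class implicit in the definition of $S^\ke_\mu$. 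Since $(\WmodG)^{\T}$ is finite, the $\T$-fixed locus of each $QG^\ke_{0,2+m,\beta}(\WmodG)$ is proper, so the $\T\times\CC^*$-equivariant push-forward to a point is well-defined and lies coefficient-wise in $K[z]$. Hence $\mathcal{D}_\mu\in K[z][[q,t_i,y]]$; in particular it is regular at $z=0$.

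Next I would evaluate $\mathcal{D}_\mu$ by virtual $\CC^*$-localization on the parametrized $\PP^1$. Exactly as in the proof of Proposition \ref{Unitary}, the insertions $p_0$ and $p_\infty$ kill every contribution except those from fixed loci $F^{k_1,\beta_1}_{k_2,\beta_2}$ with $x_1$ over $0$ and $x_2$ over $\infty$. Combining the fiber-product description \eqref{simple loci}, the product formula \eqref{contproduct} for the Euler class of the virtual normal bundle, and the restriction formula \eqref{Urestriction} for $U(L_\theta)$, each contribution factors into a product of an integral over the $(k_1,\beta_1)$-factor (of $S^\ke_\mu(z)$-type) and one over the $(k_2,\beta_2)$-factor (of $S^\ke_\mu(-z)$-type), glued along $\eb$. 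The $\delta_\mu$ insertions at $x_1, x_2$ propagate through the $\eb$-evaluations and concentrate both sides on the preimage of $\mu$; the character factor $\CC_{\beta_2(\theta)}$ of \eqref{Urestriction} produces an exponential $e^{-zy\beta_2(L_\theta)}$, which is absorbed into the Novikov parameter of the $(-z)$-side as the substitution $q\mapsto qe^{-zyL_\theta}$. The upshot is an identity of the form
\begin{equation*}
\mathcal{D}_\mu \;=\; c_\mu(y)\cdot S^\ke_\mu(q,{\bf t},z)\cdot S^\ke_\mu(qe^{-zyL_\theta},{\bf t},-z),
\end{equation*}
where $c_\mu(y)\in K[[y]]$ is a nonzero scalar assembled from $e^{c_1(\cO(\theta))|_\mu y}$ and the $\bullet$-pairing at $\mu$, and is regular and non-vanishing at $z=0$. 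Combined with the polynomiality from the first step, this yields the Lemma.

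The main technical obstacle is the last step: tracking the $\CC^*$-weights of $U(L_\theta)$ on both sides of each fixed locus carefully enough to confirm that the character contribution produces the substitution $q\mapsto qe^{-zyL_\theta}$ on precisely the $(-z)$-factor and nowhere else, and that the $\delta_\mu$-localization combined with the $\bullet$-pairing at $\mu$ isolates exactly one $\mu$-component per side of each fixed locus. This is a $\mu$-localized refinement of the bookkeeping already carried out in the proof of Proposition \ref{poly lemma}, now with the additional burden of tracking fixed-point data across the fiber-product structure of the $\CC^*$-fixed loci.
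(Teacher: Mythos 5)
Your plan has a genuine gap at the factorization step, and the way you try to localize at $\mu$ differs from what the paper actually needs. Inserting $\delta_\mu\,p_0$ at $x_1$ and $\gamma\delta_\mu\,p_\infty$ at $x_2$ constrains only the images of the two \emph{markings} to lie over $\mu$; it does \emph{not} constrain the node $\bullet$ where the two $\CC^*$-fixed halves of the domain curve attach to the parametrized $\PP^1$. After $\CC^*$-localization, the fiber product over $\eb$ in \eqref{simple loci} introduces a sum over the basis $\{\gamma_i\}$ at the gluing node, and what you obtain has the schematic shape
\begin{equation*}
\mathcal{D}_\mu \;=\; \sum_i \Bigl\lla \tfrac{e^{c_1(\cO(\theta))y}\gamma_i}{z-\psi},\,\delta_\mu \Bigr\rra^\ke(z;q)\;\Bigl\lla \gamma\delta_\mu,\,\tfrac{\gamma^i}{-z-\psi}\Bigr\rra^\ke\!\bigl(-z;qe^{-zyL_\theta}\bigr),
\end{equation*}
which is a \emph{sum} over $i$ of products in which the $\tfrac{1}{z-\psi}$-slot carries the gluing class $\gamma_i$, not $\delta_\mu$. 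This is not $S^\ke_\mu(z)\cdot S^\ke_\mu(-z)$; recall $S^\ke_\mu=\lla\tfrac{\delta_\mu}{z-\psi},\gamma\rra^\ke$, where $\delta_\mu$ sits in the $\psi$-slot and $\gamma$ sits at the other marking. The statement in your last paragraph (``the $\delta_\mu$-localization combined with the $\bullet$-pairing at $\mu$ isolates exactly one $\mu$-component per side of each fixed locus'') is precisely what fails: a $\T$-fixed quasimap can have $ev_1=\mu$ and $\eb=\nu\neq\mu$ as soon as its degree is positive, and such components contribute.

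The paper's argument achieves the constraint you need by a different device: it takes the $\T$-localization residue $\mathrm{Res}_\mu[\QGraphe]^{\mathrm{vir}}$ at the specific $\T$-fixed component $QG^\ke_{0,2+m,\beta}(\WmodG)_\mu$ consisting of quasimaps for which the parametrized $\PP^1$ is \emph{contracted to} $\mu$. Because the entire $\PP^1$ maps to $\mu$, the attachment points over $0$ and $\infty$ necessarily lie over $\mu$ as well, so after $\CC^*$-localization the gluing happens only at $\mu$, producing $S^\ke_\mu$ on each side rather than a sum over the whole cohomology basis. The markings $x_1,x_2$ then carry the class $\gamma$ itself (expanded into $q$-homogeneous pieces $\gamma_{\beta_j}$, with the modified universal line bundle $U_{\beta_1,\beta_2}(L_\theta)$ chosen to match that expansion), not $\delta_\mu$. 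This last point is a secondary discrepancy in your write-up: using $U(L_\theta)$ rather than $U_{\beta_1,\beta_2}(L_\theta)$ does not correctly account for the $q$-degree carried by the insertion $\gamma=\one+O(q)$ when producing the substitution $q\mapsto qe^{-zyL_\theta}$ on the $(-z)$-factor. Your first step (properness of the $\T$-fixed locus implies the equivariant push-forward lies in $K[z]$, hence is regular at $z=0$) is sound and is the same as the paper's; the breakdown is entirely in how you attempt to concentrate the node at $\mu$.
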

  
  \begin{proof} For effective classes $\beta,\beta_1,\beta_2$ consider the morphism
$$\Phi_\ke\circ b_{\ke, \{d(\beta_1),d(\beta_2)\}}\circ\iota_{\ke} : QG^\ke_{0,2+m,\beta}(\WmodG) \lra QG_{0,0,d(\beta)+d(\beta_1)+ d(\beta_2)}(\PP^N_{A^\G})$$
from \S\ref{universal} and denote by $U_{\beta_1,\beta_2}(L_\theta)$ the universal $\CC^*$-equivariant line bundle obtained by pulling back $\cO(1)$ with
the canonical linearization.
  
 Next, for a given $\mu\in(\WmodG)^{\T}$, let
 $$QG^\ke_{0,2+m,\beta}(\WmodG)_\mu$$
 be the $\T$-fixed locus in the $QG^\ke_{0,2+m,\beta}(\WmodG)$ consisting of quasimaps for which the parametrized $\PP^1$ is contracted to the point $\mu$.
 It is a $\CC^*$-invariant substack. Denote by $\kappa$ the inclusion map into the graph space.
 We then put
 $${\mathrm{Res}}_\mu [QG^\ke_{0,2+m,\beta}(\WmodG)]^{\mathrm{vir}}:= \kappa_*
 \left ( \frac{[QG^\ke_{0,2+m,\beta}(\WmodG)_\mu]^{\mathrm{vir}}}{\mathrm{e}^{\T}(N^{\mathrm{vir}})}\right),$$
the $\T$-localization residue of the virtual class $[QG^\ke_{0,2+m,\beta}(\WmodG)]^{\mathrm{vir}}$ of the entire graph space at this locus. (In the case of twisted theory, we replace
as usual $[QG^\ke_{0,2+m,\beta}(\WmodG)_\mu]^{\mathrm{vir}}$ by 
$\kappa^*\mathrm{e}(R^0\pi_*E_{0,2+m,\beta})\cap[QG^\ke_{0,2+m,\beta}(\WmodG)_\mu]^{\mathrm{vir}}$.)
  
Finally,  we write $\gamma$ as a $q$-series
  $$\gamma=\gamma_0 +\sum_{\beta\neq 0}q^\beta \gamma_{\beta}$$
  with $\gamma_\beta\in H^*_{\T,\mathrm{loc}}(\WmodG)$.
  
  Consider the generating series
  
  \begin{align*}
 & \lla \gamma p_0,\gamma p_\infty ; e^{c_1(U(L_\theta))}\rra^{QG^\ke}_{2;\mu} :=\sum _{m,\beta\geq 0}\frac{q^\beta}{m!} 
 \sum_{\beta_1,\beta_2\geq 0} q^{\beta_1}q^{\beta_2}\times \\
& \int_{{\mathrm{Res}}_\mu[QG^\ke_{0,2+m,\beta}(\WmodG)]^{\mathrm{vir}}}e^{c_1(U_{\beta_1,\beta_2}(L_\theta))y}ev_1^*(\gamma_{\beta_1}p_0)
ev_2^*(\gamma_{\beta_2}p_\infty)
  \prod_{i=1}^m ev_{2+i}^*({\bf t}).
  \end{align*}
  It is defined {\it without} $\CC^*$-localization and hence it is naturally a power series in $z$.
  
  On the other hand, we can calculate it by applying virtual localization for the $\CC^*$-action on the graph space, as in the proofs of Lemma \ref{poly lemma} 
  and of Proposition \ref{unitary}. The result is
  $$e^{w_{\mu}y}D(S^\ke_\mu),
  $$
  where $w_\mu=w(\cO(\theta);\mu)$ is the $\T$-weight on the fiber of $\cO(\theta)$ at $\mu$. (In the twisted case, there is an additional factor given by the character
  $\mathrm{e}(\underline{E}_\mu)$ of the $\T$ representation of the fiber of $\underline{E}$ at $\mu$.)
  The Lemma is proved.
  \end{proof}
  
  \begin{Rmk}\label{modification} Since the series
$P^\ke$ and $P^{\infty,\ke}$ appearing in Theorem \ref{equiv Thm2}  contain only nonnegative powers of $z$, it is clear that 
  (after making appropriate adjustments to the universal bundles $U(L_\theta)$) 
  the proof also applies to the components of  $S^\ke_{\bf t}(z)(P^\ke({\bf t}, z)) $ and $S^\infty_{{\bf {\tau}}}(z)(P^{\infty ,\ke}({\bf {\tau}},z))$, where ${\bf{\tau}}={\bf {\tau}}^{\infty,\ke}(q,{\bf t})$.
  \end{Rmk}
  
  \subsection{Uniqueness Lemma} The following statement is an appropriately generalized version of Givental's Uniqueness Lemma, 
  see Proposition 4.5 in \cite{Givental} and Lemma 3
in \cite{Kim}.

\begin{Lemma}  \label{uniqueness lemma}
  For $j=1,2$, let $$\{S_{j,\mu}\; |\; \mu\in (\WmodG)^{\T}\}$$
  be two systems of power series in $\Lambda[[t_i]]\{\!\{z,z^{-1}\}\!\}=K[[q,t_i]]\{\!\{z,z^{-1}\}\!\}$ satisfying the following properties:
  \begin{enumerate}
  \item If we write
$$S_{j,\mu}=\sum_{\beta}q^\beta\sum_{ \underline{k}} a_{j,\mu,\beta,\underline{k}}(z) \prod_i t_i^{k_i}$$
for $j=1,2$, then $a_{j,\mu,\beta,\underline{k}}(z)$ are in $K(z)$ and decompose into sums of partial fractions with specified poles, as given by Lemma \ref{poles}.
\item The systems 
$$\{S_{j,\mu}\; |\; \mu\in (\WmodG)^{\T}\},\;\;\; j=1,2$$
satisfy the recursion relation \eqref{recursion formula}.
\item  For $j=1,2$ and  all $\mu$, the convolution
$$ D(S_{j,\mu}):= S_{j,\mu}(q,{\bf t},z)S_{j,\mu}(qe^{-zyL_\theta},{\bf t},-z)
$$
has no pole at $z=0$.
\item For all $\mu$, $$S_{1,\mu}=S_{2,\mu}\;\;\; \mathrm{mod} \;1/z^2,$$ 
\item For $j=1,2$ and  all $\mu$, $$S_{j,\mu}=i_\mu^*(e^{{\bf t}/z})+O(q).$$
 \end{enumerate}
 Then  $S_{1,\mu}=S_{2,\mu}$ for all $\mu\in (\WmodG)^{\T}$.
  
  \end{Lemma}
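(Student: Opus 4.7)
\smallskip
\noindent\textbf{Proof proposal.}
The plan is to proceed by induction on the nonnegative integer $d=\beta(L_\theta)$, using that $\beta(L_\theta)=0$ iff $\beta=0$ for $L_\theta$-effective classes. The base case $d=0$ is exactly hypothesis~(5). For the inductive step, set $D_\mu:=S_{1,\mu}-S_{2,\mu}$ and assume $D_\mu|_{q^{\beta'}}=0$ for every effective $\beta'$ with $\beta'(L_\theta)<d$; fix $\beta$ with $\beta(L_\theta)=d$ and a fixed point $\mu\in(\WmodG)^{\T}$, and aim to show $D_\mu|_{q^\beta}=0$.

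\smallskip
First I would use the recursion~\eqref{recursion formula} to kill all poles at nonzero $z$. In the $q^\beta$-coefficient of $S_{j,\mu}^\ke$, the sum over $M'\in U(\mu)$ carries $q^{\beta_{M'}}$ with $\beta_{M'}\neq 0$, so each term is pulled from $S_{\nu_{M'},\alpha_{M'}}^\ke$ at $q^{\beta-\beta_{M'}}$ of $L_\theta$-degree strictly less than $d$; by the inductive hypothesis these are the same for $j=1,2$ and therefore cancel in $D_\mu|_{q^\beta}$. What remains, $(R_{1,\mu}^\ke-R_{2,\mu}^\ke)|_{q^\beta}$, has a pole only at $z=0$, and condition~(4) further forces the vanishing of its $z^0$ and $z^{-1}$ coefficients. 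Thus each $t$-monomial of $D_\mu|_{q^\beta}$ lies in $z^{-2}K[z^{-1}]$.

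\smallskip
Next I would invoke the polynomiality~(3). Set $\Phi_\mu(z):=i_\mu^*(e^{{\bf t}/z})$, so that $\Phi_\mu(z)\Phi_\mu(-z)=1$ and $S_{j,\mu}|_{q^0}=\Phi_\mu(z)$. In the $q^\beta$-expansion of $D(S_{j,\mu})=S_{j,\mu}(q,z)\,S_{j,\mu}(qe^{-zyL_\theta},-z)$, every mixed-degree term with $\beta_1+\beta_2=\beta$ and $\beta_1,\beta_2\neq 0$ has $\beta_i(L_\theta)<d$ and so is the same for $j=1,2$, cancelling in $D(S_{1,\mu})-D(S_{2,\mu})$. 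The surviving $\beta_1=0$ and $\beta_2=0$ contributions reduce (3) to the statement that
\[
F(z)+F(-z)\,e^{-zy\beta(L_\theta)} \quad\text{has no pole at } z=0,
\]
where $F(z):=D_\mu|_{q^\beta}(z)\,\Phi_\mu(-z)\in K[[t_i]][[z^{-1}]]$ has order at least $z^{-2}$ by the previous step. Writing $F(z)=\sum_{k\geq 2}c_k z^{-k}$ and reading off the coefficient of $y^m$ in the $z^{-2}$ term of the displayed expression yields relations forcing $c_k=0$ for all $k\geq 2$ (using $\beta(L_\theta)=d>0$). Hence $F=0$, and therefore $D_\mu|_{q^\beta}=F(z)\,\Phi_\mu(-z)^{-1}=0$, completing the induction.

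\smallskip
The main obstacle, in my view, lies in the first step: rigorously extracting from~\eqref{recursion formula} the clean separation between a non-zero-pole part of $S_{j,\mu}|_{q^\beta}$ that is strictly determined by $S_\nu^\ke$ at $L_\theta$-degrees below $d$, and a pole-at-zero part in $K[[t_i]][z^{-1}]$. The evaluation $z=\alpha_{M'}-\psi_\infty$ with $\psi_\infty$ a nilpotent non-equivariant class on the unbroken edge $M'$ must be unfolded as a finite Taylor expansion in $\psi_\infty$, and each resulting coefficient must be traced back to a value of $S_{\nu_{M'}}^\ke$ at strictly lower $L_\theta$-degree before the inductive cancellation can be applied termwise. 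Once that bookkeeping is in place, the rest of the proof is Givental's classical uniqueness argument adapted to the $\ke$-quasimap framework and to torus actions whose one-dimensional orbits need not be isolated.
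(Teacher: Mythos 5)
Your proof is correct and, at its core, follows the same strategy as the paper: a degree induction in $q$, using the recursion hypothesis $(2)$ to conclude that the difference $D_\mu|_{q^\beta}:=S_{1,\mu}|_{q^\beta}-S_{2,\mu}|_{q^\beta}$ is a polynomial in $1/z$, hypothesis $(4)$ to kill the $z^0$ and $z^{-1}$ coefficients, and the polynomiality hypothesis $(3)$ applied to the surviving boundary terms of $D(S_{1,\mu})-D(S_{2,\mu})$ to force vanishing. Where you diverge is in the bookkeeping, and both changes are genuine streamlinings. First, you induct only on $d=\beta(L_\theta)$, whereas the paper inducts on the lexicographic bi-degree $(m,d)$ with $m$ the $t$-degree placed first; your coarser induction works precisely because you keep the whole $t$-power series at each $q^\beta$ intact rather than slicing to a single $t$-homogeneous piece, so the mixed terms with $\beta_1,\beta_2\neq 0$ cancel wholesale and no inner $m$-induction is required. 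Second, you normalize by multiplying with $\Phi_\mu(-z)=i_\mu^*(e^{-{\bf t}/z})$, exploiting $\Phi_\mu(z)\Phi_\mu(-z)=1$, which turns the surviving $\beta_1=0$ and $\beta_2=0$ terms into the symmetric expression $F(z)+F(-z)e^{-zyd}$ with $F(z)=D_\mu|_{q^\beta}(z)\Phi_\mu(-z)\in z^{-2}K[[t_i]][z^{-1}]$; reading the $z^{-2}$-coefficient of this expression $y$-degree by $y$-degree (and using $d\geq 1$) directly kills every $c_k$, so $F=0$ and hence $D_\mu|_{q^\beta}=F\cdot\Phi_\mu(z)=0$. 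The paper reaches the same conclusion instead by writing $S^{(m,d)}_{1,\mu}-S^{(m,d)}_{2,\mu}=z^{-2a}(A/z+B+O(z))$ and running a descending induction on $a$ using the leading coefficient $2B+ydA$ of $\Delta^{(m,d)}$. Your version avoids both the bi-degree machinery and the descending induction, at the small cost of introducing the auxiliary series $F$, and makes the role of $\Phi_\mu(z)\Phi_\mu(-z)=1$ explicit; the technical point you flag about unfolding the evaluation $z=\ka_{M'}-\psi_\infty$ as a finite Taylor expansion is shared by both arguments and is not a gap.
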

  
  \begin{proof}
  For each tuple $\underline{k}:=(k_i)_i$ of nonnegative integers and each effective $\beta$ define the {\it bi-degree}
 of the monomial $q^\beta\prod_i t_i^{k_i}$ to be
 $$\left(\sum_ik_i,\beta(L_\theta)\right)\in \NN\times \NN.$$
(Note the reversal of ordering in the pair: the total degree in the $t$-variables appears first!)

For a pair $(m,d)$ of nonnegative integers, denote by $S^{(m,d)}_{j,\mu}$ the 
part of bi-degree $(m,d)$ of $S_{j,\mu}$,
$$S^{(m,d)}_{j,\mu}:=\sum_{\{\beta : \beta(L_\theta)= d\} }q^\beta\sum_{\{ \underline{k}: \sum k_i= m\}} a_{j,\mu,\beta,\underline{k}}(z) \prod_i t_i^{k_i}.$$
To prove the Lemma it suffices to show that for all $\mu\in (\WmodG)^{\T}$ and all $(m,d)\in \NN\times \NN$ we have 
\begin{equation}\label{induction}
S^{(m,d)}_{1,\mu}=S^{(m,d)}_{2,\mu}. 
\end{equation}
We do so
by induction on $(m,d)$, using the lexicographic order
$$(m',d')< (m,d) \; {\text {iff}}\; m'<m,\; {\text{or}}\; m'=m\;{\text{and}}\; d'<d.$$
The base case $d=0$ and all $m\in\NN$ holds by property $(5)$ above. Let $(m,d)$ be fixed with $d\geq 1$. We assume \eqref{induction} is true for all fixed points
$\mu$ and all $(m',d')< (m,d)$. Consider the difference
$$\Delta:=D(S_{1,\mu})-D(S_{2,\mu})$$
and let $\Delta^{(m,d)}$ be its part of bi-degree $(m,d)$. By the induction assumption
\begin{equation}\label{difference}\Delta^{(m,d)}=(S^{(m,d)}_{1,\mu}-S^{(m,d)}_{2,\mu})+e^{-zyd}(S^{(m,d)}_{1,\mu}(-z)-S^{(m,d)}_{2,\mu}(-z)).
\end{equation}
By the recursion property $(2)$ and the induction assumption again, the difference
$$S^{(m,d)}_{1,\mu}-S^{(m,d)}_{2,\mu}$$
is a polynomial in $1/z$. We write it as
$$z^{-2a}\left(\frac{A}{z}+B+O(z)\right),$$ 
with $a\geq 0$ and $A,B\in\Lambda[\{t_i\}]$ homogeneous polynomials of bi-degree $(m,d)$.
Then \eqref{difference} can be rewritten as
$$\Delta^{(m,d)}=z^{-2a}\left(2B+ydA+O(z)\right).$$
By the polynomiality property $(3)$, $\Delta^{(m,d)}$ has no pole at $z=0$. Hence if $a\geq1$, then we must have $2B+ydA=0$, which in turn implies $A=B=0$. 
By descending induction we obtain that $a=0$. But then $S^{(m,d)}_{1,\mu}=S^{(m,d)}_{2,\mu}$, since they agree modulo $1/z^2$ by property $(4)$.
\end{proof}

\subsection{Proof of Theorem \ref{equiv Thm1}  }
For every fixed point $\mu\in (\WmodG)^{\T}$ put 
$$S_{1,\mu}:= \lan S^{\ke_1}_{\tau^{\ke_1,\ke_2}_\gamma ({\bf t})}(z)(\gamma),\delta_\mu\ran,$$
$$S_{2,\mu}:= \lan S^{\ke_2}_{\bf t}(z)(\gamma),\delta_\mu\ran.$$
Theorem \ref{equiv Thm1} will be proved if we show that
$$S_{1,\mu}=S_{2,\mu},\;\;\; \forall \mu\in (\WmodG)^{\T}.$$
But this follows from the Uniqueness Lemma \ref{uniqueness lemma}, since $S_{j,\mu}$ are power series in $\Lambda[[t_i,1/z]]$
satisfying the five properties listed in that Lemma. Indeed
\begin{itemize}
\item Property $(1)$ holds by Lemma \ref{poles}.
\item Property $(2)$ holds by Lemma \ref{recursion lemma}. 
\item Property $(3)$ holds by Lemma \ref{local poly}.
\item Property $(4)$ holds by \eqref{1/z matching}.
\item Property $(5)$ holds by the definitions of the operators $S^\ke(z)$ and of the transformation $\tau^{\ke_1,\ke_2}_\gamma$.
\end{itemize}

\subsection{Proof of Theorem \ref{equiv Thm2}}

For every fixed point $\mu\in (\WmodG)^{\T}$ put 
$$S_{1,\mu}:= \lan  S^\ke_{\bf t}(z)(P^\ke({\bf t}, z)) ,\delta_\mu\ran,$$
$$S_{2,\mu}:= \lan  S^\infty_{\tau^{\infty,\ke}({\bf t})}(z)(P^{\infty,\ke}(\tau^{\infty,\ke}({\bf t}),z))  ,\delta_\mu\ran.$$
Then $S_{j,\mu}$ are power series in $\Lambda[[t_i]]\{\!\{z,z^{-1}\}\!\}$. By their definition, they satisfy properties $(4)$ and $(5)$ in the Uniqueness Lemma. 
By Remark \ref{modification}, they satisfy property $(3)$ as well.

Recall that in Theorem \ref{equiv Thm2} we make the additional assumption that the $1$-dimensional $\T$-orbits in $\WmodG$ are isolated. This implies that the unbroken
components of the $\T$-fixed loci are of dimension zero. 

Let $o(\mu)$ denote the set of all fixed points $\nu$ connected to $\mu$ by a $1$-dimensional $\T$-orbit.
The proof of Lemma \ref{poles} shows that each $(q,\{t_i\})$-coefficient of $S_{j,\mu}$ is a rational function in $z$
which decomposes as the sum of a Laurent polynomial in $K[z,z^{-1}]$ and a sum of partial fractions with {\it simple} poles at $\frac{w(\mu,\nu)}{-n}$, where $n$ are positive integers and for all $\nu\in o(\mu)$, $w(\mu,\nu)$ is the tangent weight at $\mu$
of the corresponding orbit. Furthermore, $S_{j,\nu}$ is regular at $z=\frac{w(\mu,\nu)}{-n}$.

Finally, we claim that for $j=1,2$ the system $\{S_{j,\mu}\;|\; \mu\in (\WmodG)^{\T}\}$ satisfies Givental's recursion relation \eqref{isolated recursion}. Precisely,
$$S_{j,\mu}  (z) = R_{j,\mu} (z) + \sum _{\nu\in o(\mu)} \sum_{n=1}^{\infty}q^{n\beta(\mu,\nu)}\frac{C_{\mu,\nu,n}}{z+\frac{w(\mu,\nu)}{n}}S_{j,\nu}  (-\frac{w(\mu,\nu)}{n}),$$
with $R_{j,\mu} (z)$ having each $(q,\{t_i\})$-coefficient in $K[z,z^{-1}]$ and the recursion coefficients $C_{\mu,\nu,n}\in K$, not depending on $j$.
This is very easy to see. 

For example, take $j=1$. For each $\nu\in o(\mu)$ and each $n\geq 1$, write
\begin{equation}\label{pole removal}
P^\ke(z)=P^\ke(-\frac{w(\mu,\nu)}{n})+(z+\frac{w(\mu,\nu)}{n})A_{\nu,n}^\ke(z).
\end{equation}
For each $(m,d)$, the part of bi-degree $(m,d)$ of $A_{\nu,n}^\ke(z)$ is a polynomial in $z$.

Calculating $S_{1,\mu}(z)=i_\mu^*(S^\ke(z)(P^\ke(z))$ via virtual localization as in the proof of Lemma \ref{recursion lemma} gives
$$S_{1,\mu}(z)=\hat{R}^\ke_\mu(z)+\sum _{\nu\in o(\mu)} \sum_{n=1}^{\infty}q^{n\beta(\mu,\nu)}\frac{C_{\mu,\nu,n}}{z+\frac{w(\mu,\nu)}{n}} i_\nu^*
S^\ke(-\frac{w(\mu,\nu)}{n})(P^\ke(z)),$$
where $\hat{R}^\ke_\mu(z)$ is obtained by summing the contributions from the fixed-point components of initial type. As power series of $q$ and $t_i$,  
$\hat{R}^\ke_\mu(z)$ has coefficients in $K[z,z^{-1}]$. The recursion coefficients $C_{\mu,\nu,n}$ do not depend on $\ke$, or on the insertion $\gamma$ in $S(z)(\gamma)$.
Now use \eqref{pole removal} to get
\begin{align*}
S_{1,\mu}(z)&=
\hat{R}^\ke_\mu(z)+\sum _{\nu\in o(\mu)} \sum_{n=1}^{\infty}q^{n\beta(\mu,\nu)}C_{\mu,\nu,n} i_\nu^*S^\ke(-\frac{w(\mu,\nu)}{n})(A_{\nu,n}^\ke(z)) +\\
&+\sum _{\nu\in o(\mu)} \sum_{n=1}^{\infty}q^{n\beta(\mu,\nu)}\frac{C_{\mu,\nu,n}}{z+\frac{w(\mu,\nu)}{n}} i_\nu^*S^\ke(-\frac{w(\mu,\nu)}{n})(P^\ke(-\frac{w(\mu,\nu)}{n})).
\end{align*}
Setting
$$R_{1,\mu} (z)=\hat{R}^\ke_\mu(z)+\sum _{\nu\in o(\mu)} \sum_{n=1}^{\infty}q^{n\beta(\mu,\nu)}C_{\mu,\nu,n} i_\nu^*S^\ke(-\frac{w(\mu,\nu)}{n})(A_{\nu,n}^\ke(z)),$$
gives the claimed recursion relation for $S_{1,\mu}(z)$. The same argument obviously works for $S_{2,\mu}(z)$. 

Note that the proof of the Uniqueness Lemma goes through if we allow the initial terms in the recursion relation to have coefficients in $K[z,z^{-1}]$ rather than in $K[z^{-1}]$.
Hence we may apply it to the systems $\{S_{j,\mu}\;|\; \mu\in (\WmodG)^{\T}\}$ to conclude the Theorem.

\end{document}